\newcommand{\halmos}	{${ }^{ }$\hfill\rule{2mm}{2mm}}
\newcommand{\EQ}{\begin{eqnarray}}
\newcommand{\EN}{\end{eqnarray}}
\newcommand{\EQQ}{\begin{eqnarray*}}
\newcommand{\ENN}{\end{eqnarray*}}
\newcommand{\nnum}{\nonumber}
\title{On the Hybrid Minimum Principle on Lie Groups and
the Exponential Gradient HMP Algorithm \thanks{ This work was supported by an NSERC Canada Discovery grant and by AFOSR.}}
\author{Farzin Taringoo and Peter E. Caines\thanks{Department of Electrical and Electronic Engineering, The University of Melbourne, ftaringoo@unimelb.edu.au.\newline Department of Electrical and Computer Engineering and the Centre for Intelligent
Machines (CIM), McGill University, Montreal, Canada, peterc@cim.mcgill.ca}}
\begin{document}
\maketitle

\begin{abstract}                
This  paper provides a geometrical derivation of the Hybrid Minimum Principle (HMP) for  autonomous  hybrid systems whose  state manifolds constitute Lie groups $(G,\star)$ which are left invariant under the controlled dynamics of the system, and whose switching manifolds  are defined as smooth embedded time invariant submanifolds of $G$. The analysis  is expressed in terms of extremal (i.e. optimal)  trajectories on the cotangent bundle of the state manifold $G$. Based upon the theory in the paper,  the Hybrid Maximum Principle (HMP) algorithm introduced in \cite{Shaikh} is extended to the so-called Exponential Gradient algorithm for systems on Lie groups. The convergence analysis for the algorithm is based upon the LaSalle Invariance Principle and simulation results illustrate their efficacy.     \end{abstract}

\begin{keywords} 
Hybrid Minimum Principle, Riemannian Manifolds, Lie Groups. 
\end{keywords}


\section{Introduction}
Lie groups have long been considered as configuration manifolds for dynamical systems (see e.g. \cite{Agra, Bro, Abra, Bloch}), and correspondingly various control problems have been formulated for controlled systems defined on Lie groups (see e.g. \cite{Agra,Leonard1}),  including in particular optimal control problems, see \cite{Bro1, Jurd1, Bloch1, Jurd}.
 
In an independent line of research, the problem of hybrid systems optimal control (HSOC) has been studied
and analyzed in many papers, see e.g. \cite{Bran1,Riedinger,Shaikh,Sussmann,Sus1,Hab,Tomlin,Dmitruk, Xu}. In particular, \cite{Shaikh, Sussmann,  Azhmyakov} present an extension of the Hybrid Maximum Principle (henceforth referred to as Minimum due to the nature of the performance function and abbreviated as HMP) for hybrid systems
 and \cite{Shaikh} presents an iterative algorithm for trajectory optimization which is based upon the HMP necessary conditions for optimality. The HMP algorithm presented in \cite{Shaikh} is a general search method applicable to both autonomous and controlled hybrid systems, that is to say hybrid systems with discrete state switching and continuous state jumps at switching manifolds, and controlled state switching and state jumps respectively. A geometric version of Pontryagin's Minimum Principle for a general class of state  manifolds is given in \cite{Agra, Sussmann, Barbero}.   

In this paper, we  generalize   the  analysis  in \cite{Shaikh} and \cite{Taringoo5} to obtain the HMP result for left invariant hybrid systems defined on Lie groups. Our method is based upon a construction of the adjoint processes by transferring the optimal state variation at the optimal final state to the identity element of a Lie group without an a prior assumption on the existence of the adjoint processes as per \cite{Dmitruk, Sussmann,Sus1}.  In this connection we note that our analysis generalizes the method presented in \cite{Dmitruk} for hybrid control systems with open control value sets. We employ the notion of Riemannian metrics to analyze the optimal state variation through a switching manifold to obtain the discontinuity of the adjoint variable at the optimal switching state and time for autonomous hybrid systems.   The analysis in this paper can also be applied to right invariant hybrid systems where the corresponding adjoint variable will be different. This proof can be generalized to a class of autonomous hybrid systems associated with  time varying switching manifolds.

 In the last part of the paper, the HMP algorithm in \cite{Shaikh} is generalized to the so-called exponential gradient HMP algorithm by employing the notion of exponential curves on Lie groups. The convergence analysis for the proposed algorithm is based on the LaSalle Invariance Principle.
\section{Hybrid systems}
 In the following definition the standard hybrid systems framework (see e.g. \cite{Bran1,Shaikh}) is generalized to  the case where the continuous state space is a smooth manifold, where henceforth in this paper smooth means $C^{\infty}$.
\newtheorem{definition}{Definition}
\newtheorem{lemma}{Lemma}
\newtheorem{proposition}{Proposition}
\newtheorem{theorem}{Theorem}
\begin{definition}
\label{d1}
A hybrid system with autonomous discrete transitions is a five-tuple 
\EQ \bf{H}:= \{\textsl{H}=\textsl{Q}\times \mathcal{M}, \textsl{U},\textsl{F},\mathcal{S},\mathcal{J}\}\EN
where:\\
$ Q=\{1,2,3,...,|Q|\}$
is a finite set of \textit{discrete (valued) states (components)} and $\mathcal{M}$ is a smooth $n$ dimensional  Riemannian continuous (valued) state (component) manifold with associated metric $g_{\mathcal{M}}$.
\\$H$  is called the \textit{hybrid state space} of $\bf{H}$.\\
$U\subset \mathds{R}^{u}$ is a set of \textit{admissible input control values}, where  $U$ is a compact set in $\mathds{R}^u$. The set of \textit{admissible input control functions} is $\mathcal{I}:=(L_{\infty}[t_{0},t_{f}),U)$, the set of all bounded measurable functions on some interval $[t_{0},t_{f}), t_{f}< \infty$, taking values in $U$.\\
$F$ is an \textit{indexed collection of smooth, i.e. $C^{\infty}$, vector fields} $\{f_{q_{i}}\}_{q_{i}\in Q}$, where $f_{q_{i}}:\mathcal{M}\times U\rightarrow T\mathcal{M}$ is a controlled vector field assigned to each discrete state; hence each $f_{q_{i}}$ is continuous on $\mathcal{M}\times U$ and continuously differentiable on $\mathcal{M}$ for all $u\in U$.\\
$\mathcal{S}:=\{n^{k}_{\gamma}: \gamma\in Q\times Q, 1\le k \le K<\infty,  n^{k}_{\gamma}\subset \mathcal{M}\}$ is a collection of  embedded time independent pairwise disjoint switching manifolds $($except in the case 
 where $\gamma = (p,q)$ is identified with $\gamma^{'} = (q,p)$$)$ such that for any ordered pair $\gamma=(p,q), n^{k}_{\gamma}$ is an open smooth, oriented codimension 1 submanifold of $\mathcal{M}$, possibly with boundary $\partial {n}^{k}_{\gamma}$. By abuse of notation, we describe the manifolds  locally by $n^{k}_{\gamma}=\{x: n^{k}_{\gamma}(x)=0, x\in \mathds{R}^{n}\}$.\\ 
$\mathcal{J}$  shall denote the family of the \textit{state jump functions} on the manifold $\mathcal{M}$.\halmos
\end{definition}
We assume:\\
\textbf{\textit{A1}}: The \textit{initial state} $h_{0}:=(x(t_{0}),q_{0})\in H$ is such that $x_{0}=x(t_{0})\notin \mathcal{S}$ for all $q_{i}\in Q$. 

A \textit{(hybrid) input function }$u$ is defined on a half open interval
$[t_0, t_{f}), t_{f} \leq \infty$, where further $u\in \mathcal{I}$.
 A \textit{(hybrid) state trajectory} with initial state $h_{0}$ and (hybrid) input function $u$ is a triple $(\tau, q, x)$
consisting of a strictly increasing sequence of (boundary and switching) times
$\tau = (t_0, t_1, t_2,\dots)$, an associated sequence of discrete states $q = (q_0,q_1, q_2, \dots)$, and a sequence $x(\cdot) = (x_{q_0}(\cdot), x_{q_1}(\cdot), x_{q_2}(\cdot), \dots)$ of absolutely continuous
functions $x_{q_i}: [t_i, t_{i+1}) \rightarrow \mathcal{M}$ satisfying the  continuous and discrete dynamics given by the following definition.
\begin{definition}(\textit{Hybrid System Dynamics})
\label{d0}
The continuous dynamics of a hybrid system $\bf{H}$ with initial condition $h_{0}=(x_{0}, q_{0})$, input control function $u\in \mathcal{I}$ and hybrid state trajectory $(\tau, q, x)$ are specified piecewise in time via the mappings
\EQ\label{state}&&(x_{q_{i}},u):[t_i, t_{i+1})\rightarrow \mathcal{M}\times U,\nnum\\&& i=0,...,L,\quad 0<L<\infty, \EN
where $x_{q_{i}}(.)$ is an integral curve of  $ \label{state3}f_{q_{i}}(.,u(.)): \mathcal{M}\times[t_i, t_{i+1})\rightarrow T\mathcal{M}$ satisfying 
\EQ\label{state2} \dot{x}_{q_{i}}(t) = f_{q_{i}}(x_{q_{i}}(t), u(t)),
\quad a.e.  \: t \in [t_i, t_{i+1}),\nnum\EN
where $x_{q_{i+1}}(t_{i+1})$ is given recursively by 
\EQ \label{thm1}&&x_{q_{i+1}}(t_{i+1})=\lim_{t\uparrow t^{-}_{i+1}}\zeta_{q_{i},q_{i+1}}(x_{q_{i}}(t)), \nnum\\&& h_{0}=(q_{0},x_{0}),  t<t_{f}.\EN

The discrete autonomous switching dynamics are defined as follows:\\
For all $p,q$,  whenever an admissible hybrid system trajectory governed by the controlled vector field $f_{p}$ meets any given  switching manifold $n_{p,q}$ transversally, i.e. $f_{p}(x(t^{-}_{s}),t^{-}_{s}) \notin T_{x(t^{-}_{s})}\mathcal{S}$, there is an autonomous switching to the controlled vector field $f_{q}$ where this is also transversal
to  $T_{x(t^{-}_{s})}S$, corresponding to a discrete state transition $p\rightarrow q,\hspace{.2cm} p,q\in Q$.   Conversely, any autonomous discrete state transition corresponds to a transversal intersection. 

A system trajectory is not continued after a non-transversal intersection with
a switching manifold.
For an autonomous switching event from $p\in Q$ to $q\in Q$, the corresponding jump function is given by a smooth map $\zeta_{p,q}:\mathcal{M}\rightarrow \mathcal{M}$: if $x(t^{-})\in \mathcal{S}$ the state trajectory jumps to $x(t)=\zeta_{p,q}(x(t^{-}))\in \mathcal{M}$, $\zeta_{p,q}\in \mathcal{J}$, where $\zeta_{p,q}(x(t^-))$ does not lie on a switching manifold  $m_{q,r}$ for any $r \in Q$. The non-jump special case is given by $x(t)=x(t^{-})$.\halmos 

Given the definitions and assumptions above, standard arguments  give the existence and uniqueness of a hybrid state trajectory $(\tau, q, x)$, with initial state $h_{0}\in H$ and input function $u\in \mathcal{I}$, up to $T,$ defined to be the least of an explosion time or an instant of non-transversal intersection with a switching manifold, see e.g. \cite{peter}
\end{definition}

 We use the term \textit{impulsive hybrid systems} for those hybrid systems where the continuous part of the  state trajectory may have discontinuous transitions (i.e. jump) at controlled or autonomous discrete state switching times or an instant of non-transversal intersection with a switching manifold or a
zeno time, i.e.
an accumulation point of times of controlled continuous state jumps or of
controlled or autonomous discrete state switchings.

We adopt:\\
 
\textbf{\textit{A2}}:~~(Controllability) For any $ q\in Q$, all pairs of states $(x_{1}, x_{2})$ are mutually accessible in any given time period $[\hat{t},\acute{t}]\subseteq[t_{0},t_{f}]$,  via the controlled vector field $\dot{x}_{q}(t) = f_{q}(x_{q}(t), u(t)), \hspace{.1cm}$ for some $u\in \mathcal{I}=(L_{\infty}[t_{0},t_{f}],U)$.\\ 

\textbf{\textit{A3}}:~~ $\{l_{q_{i}}\}_{q_{i}\in Q}$, is a family of \textit{loss functions} such that $l_{q_{i}}\in C^{k}(\mathcal{M}\times \textit{U};\mathds{R}_{+}),k\geq1$, and $h$ is a \textit{terminal cost function} such that  $h\in C^{k}(\mathcal{M};\mathds{R}_{+}),k\geq1$.\\ 

Henceforth, Hypotheses \textbf{\textit{A1}}-\textbf{\textit{A3}} will be in force unless otherwise stated. 
Let $L$ be the number of switchings and $ u\in \mathcal{I}$ then we define the \textit{hybrid cost function} as
\EQ \label{cost}&&J(t_{0},t_{f},h_{0};L,u):=\sum^{L}_{i=0}\int^{t_{i+1}}_{t_{i}}l_{q_{i}}(x_{q_{i}}(s),u(s))ds+\nnum\\&&h(x_{q_{L}}(t_{f})),\hspace{.4cm} t_{L+1}=t_{f}<T, u\in \mathcal{I},\EN
where we observe the conditions above yield $J(t_{0},t_{f},h_{0};L,u)<\infty$.
\begin{definition}
\label{d2}
 For a hybrid system $\bf{H}$, given the data $(t_{0},t_{f},h_{0};L)$, the \textit{Bolza Hybrid Optimal Control Problem} (BHOCP) is defined as the infimization  of the hybrid cost function $J(t_{0},t_{f},h_{0};L, u)$ over  the hybrid input functions $u\in \mathcal{I}$, i.e. 
 \EQ\label{cost3} J^{o}(t_{0},t_{f},h_{0};L)=inf_{u\in\mathcal{I}}J(t_{0},t_{f},h_{0};L,u).\EN \halmos
 \end{definition}
 \begin{definition}
 \label{dm}
 A \textit{Mayer Hybrid  Optimal Control Problem} (MHOCP) is defined as the special case of the BHOCP  where the cost function given in (\ref{cost}) is evaluated only on the terminal state of the system, i.e. $l_{q_{i}}=0,\hspace{.2cm}i=1,...,L$.

 \end{definition}\halmos
 
In general, different control inputs result in different sequences of discrete states of different cardinality. However,  in this paper,  we shall restrict the infimization to be over the  class of control functions, generically denoted $\mathcal{U}\subset\mathcal{I}$,  which generates an  a priori given sequence of discrete  transition events.\\\\
We adopt the following standard notation and terminology, see e.g. \cite{Lewis}.
The time dependent flow associated to a differentiable time independent vector field $f_{q_{i}}$ is a map $\Phi_{f^{u}_{q_{i}}}$ satisfying ($f^{u}_{q_{i}}(.)$ is used here for brevity instead of $f_{q_{i}}(.,u(t))$ since the calculations are performed with respect to a given control $u$):
\EQ && \Phi_{f^{u}_{q_{i}}}:[t_i, t_{i+1})\times [t_i, t_{i+1})\times \mathcal{M}\rightarrow \mathcal{M}, \nnum\\&& (t,s,x)\rightarrow \Phi^{(t,s)}_{f^{u}_{q_{i}}}(x):= \Phi_{f^{u}_{q_{i}}}((t,s),x)\in \mathcal{M},\EN
where
 \EQ\label{1122}\Phi^{(t,s)}_{f^{u}_{q_{i}}}:\mathcal{M}\rightarrow \mathcal{M},\quad \Phi^{(s,s)}_{f^{u}_{q_{i}}}(x)=x,\EN
\EQ  \frac{d}{dt}\Phi^{(t,s)}_{f^{u}_{q_{i}}}(x)|_{t}=f_{q_{i}}\big(\Phi^{(t,s)}_{f^{u}_{q_{i}}}(x(s))\big), \hspace{.2cm}t,s\in[t_{i},t_{i+1}).\EN

We associate $T \Phi_{ f^{u}_{q_{i}}}^{(t,s)}(.)$ to  $\Phi^{(t,s)}_{f^{u}_{q_{i}}}:\mathcal{M}\rightarrow \mathcal{M}$ via the push-forward of $\Phi^{(t,s)}_{f^{u}_{q_{i}}}$. 
\EQ   T\Phi_{ f^{u}_{q_{i}}}^{(t,s)}:T_{x}\mathcal{M}\rightarrow T_{\Phi_{ f^{u}_{q_{i}}}^{(t,s)}(x)}\mathcal{M}.\EN
Following \cite{Lewis},  the corresponding \textit{tangent lift} of $f^{u}_{q_{i}}(.)$ is the time dependent vector field $f^{T,u}_{q_{i}}(.)\in TT\mathcal{M}$ on $T\mathcal{M}$

\EQ \label{10}f^{T,u}_{q_{i}}(v_{x}):=\frac{d}{dt}|_{t=s} T \Phi_{ f^{u}_{q_{i}}}^{(t,s)}(v_{x}),\quad v_{x}\in T_{x}\mathcal{M}, \EN
which is given locally as 
\EQ \label{22} f^{T,u}_{q_{i}}(x,v_{x})= \left [f^{u,i}_{q_{i}}(x)\frac{\partial}{\partial x^{i}}+(\frac{\partial f^{u,i}_{q_{i}}}{\partial x^{j}}v^{j})\frac{\partial}{\partial v^{i}}\right ]^{n}_{i,j=1},\EN
and $T \Phi_{ f^{u}_{q_{i}}}^{(t,s)}(.)$ is evaluated on $v_{x}\in T_{x}\mathcal{M}$, see \cite{Lewis}. 
The following lemma gives the relation between the push-forward of $\Phi^{(t,s)}_{f_{q_{i}}}$ and the tangent lift introduced in (\ref{22}).
 For simplicity and uniformity of notation, we use $f_{q_{i}}$ instead of $f^{u}_{q_{i}}$.

\begin{lemma}(\cite{LeeJ})
\label{ll1}
Consider $f_{q}(x,u)$ as a time dependent vector field on $\mathcal{M}$ and $\Phi^{(t,s)}_{f_{q}}$ as the corresponding flow. The flow of $f^{T,u}_{q}$, denoted by $\Psi:I\times I\times T\mathcal{M}\rightarrow T\mathcal{M},\quad I=[t_{0},t_{f}]$, satisfies
 \EQ &&\Psi(t,s,(x,v))=(\Phi^{(t,s)}_{f_{q}}(x),T\Phi^{(t,s)}_{f_{q}}(v))\in T\mathcal{M},\nnum\\&& (x,v)\in T\mathcal{M}.\EN\halmos \end{lemma}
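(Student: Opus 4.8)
The plan is to show that the curve $t\mapsto(\Phi^{(t,s)}_{f_{q}}(x),T\Phi^{(t,s)}_{f_{q}}(v))$ is the unique integral curve of the tangent lift $f^{T,u}_{q}$ passing through $(x,v)\in T\mathcal{M}$ at $t=s$, whence by uniqueness of flows it must coincide with $\Psi(t,s,(x,v))$. First I would note that the base component is immediate: by the defining property \eqref{1122} of the flow, $\Phi^{(s,s)}_{f_{q}}(x)=x$ and $T\Phi^{(s,s)}_{f_{q}}(v)=v$ since $T\Phi^{(s,s)}_{f_{q}}$ is the identity on $T_x\mathcal{M}$, so the proposed curve has the correct initial condition. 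It remains to verify that it satisfies the differential equation governed by $f^{T,u}_{q}$.

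The core computation is to differentiate both components in $t$. For the base point, $\frac{d}{dt}\Phi^{(t,s)}_{f_{q}}(x)=f_{q}(\Phi^{(t,s)}_{f_{q}}(x))$ by definition of the flow, which matches the first block of the local expression \eqref{22}. For the fibre component, write $v(t):=T\Phi^{(t,s)}_{f_{q}}(v)$; in local coordinates $v^{i}(t)=\frac{\partial (\Phi^{(t,s)}_{f_{q}})^{i}}{\partial x^{j}}(x)\,v^{j}$, and differentiating in $t$ and interchanging $\partial_t$ with $\partial_{x^j}$ (legitimate by smoothness of $f_q$ in $x$, which gives $C^1$ dependence of the flow on initial conditions) yields $\dot v^{i}(t)=\frac{\partial f^{u,i}_{q}}{\partial x^{k}}\big(\Phi^{(t,s)}_{f_{q}}(x)\big)\,v^{k}(t)$, which is exactly the second block of \eqref{22} evaluated at $(\Phi^{(t,s)}_{f_{q}}(x),v(t))$. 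Equivalently and more invariantly, this is just the statement that the variational equation along an integral curve of $f_q$ is solved by the push-forward $T\Phi^{(t,s)}_{f_{q}}$, which is the standard relation between a flow and its linearization; one can also read it off directly from the definition \eqref{10} of $f^{T,u}_{q}$ together with the flow composition identity $T\Phi^{(t,s)}_{f_q}=T\Phi^{(t,r)}_{f_q}\circ T\Phi^{(r,s)}_{f_q}$.

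With both components matched, the curve $t\mapsto(\Phi^{(t,s)}_{f_{q}}(x),T\Phi^{(t,s)}_{f_{q}}(v))$ is an integral curve of $f^{T,u}_{q}$ through $(x,v)$ at time $s$; uniqueness of integral curves of the (locally Lipschitz, since $C^\infty$) vector field $f^{T,u}_{q}$ then forces it to equal $\Psi(t,s,(x,v))$, proving the claim. I expect the only delicate point to be the justification of interchanging the $t$- and $x$-derivatives and the smooth dependence of $T\Phi^{(t,s)}_{f_q}$ on its arguments; this is covered by the standard theorem on smooth dependence of solutions of ODEs on initial conditions and parameters, given the $C^\infty$ hypothesis on $f_q$ in Definition~\ref{d1}, and a more coordinate-free argument via the flow-composition identity sidesteps the interchange entirely.
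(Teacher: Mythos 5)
The paper does not actually prove Lemma~\ref{ll1}; it is imported verbatim from \cite{Barbero} and \cite{Taringoo5}, so there is no in-paper argument to compare against. Your proposal is a correct and standard proof of the statement: you verify the initial condition via (\ref{1122}), check that the base component satisfies the flow ODE, and identify the fibre dynamics with the variational (linearized) equation, whose solution operator is exactly the push-forward $T\Phi^{(t,s)}_{f_q}$; uniqueness of integral curves then closes the argument. Your coordinate-free alternative --- differentiating $T\Phi^{(t,s)}_{f_q}=T\Phi^{(t,r)}_{f_q}\circ T\Phi^{(r,s)}_{f_q}$ at $t=r$ and invoking the definition (\ref{10}) of the tangent lift --- is the cleaner route, since it avoids the derivative interchange and matches how the lift is actually defined in the paper; the only caveat worth recording is that, with $u\in L_\infty$, differentiability in $t$ holds only at Lebesgue points (a.e.), which is the same level of rigor the paper itself adopts.
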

 For a general Riemannian manifold $\mathcal{M}$, the role of the adjoint process $\lambda$ is played by a trajectory in the cotangent bundle of $\mathcal{M}$, i.e.  $\lambda(t)\in T^{*}_{x(t)}\mathcal{M}$. In analogy with the definition of the tangent lift we define the \textit{cotangent lift} which corresponds to the variation of a differential form $\alpha\in T^{*}\mathcal{M}$ along $x(t)$, see \cite{Tyner}:
\EQ \label{6} f^{T^{*},u}_{q}(\alpha_{x}):=\frac{d}{dt}|_{t=s}T^{*}_{x}\Phi^{(t,s)^{-1}}_{f^{u}_{q}}(\alpha_{x}),\quad \alpha_{x}\in T^{*}_{x}\mathcal{M},\EN
where here $x=x(t)=\Phi^{(t,s)}_{f^{u}_{q}}(x(s))$ .

 Similar to (\ref{22}), in the local coordinates $(x,p)$ of $T^{*}\mathcal{M}$, we have
\EQ \label{e} &&\hspace{-.8cm}f^{T^{*},u}_{q}(x,p)=\left [f^{u,i}_{q}(x)\frac{\partial}{\partial x^{i}}-(\frac{\partial f^{u,j}_{q}}{\partial x^{i}}p^{j})\frac{\partial}{\partial p^{i}} \right ]^{n}_{i,j=1}\hspace{-.3cm}.\EN
The mapping $T^{*}_{x}$ is the pull back defined on the differential forms on the cotangent bundle of $\mathcal{M}$. The covector $\alpha_{x}$ is an element of $T^{*}_{x}\mathcal{M}$, see \cite{Tyner}.   
The following lemma gives the connection between the cotangent lift defined in (\ref{6}) and its corresponding flow on $T^{*}\mathcal{M}$.
\begin{lemma}(\cite{LeeJ})
\label{l3}
Consider $f_{q}(x(t),u(t))$ as a time dependent vector field on $\mathcal{M}$, then the flow  
$ \Gamma: I\times I\times T^{*}\mathcal{M}\rightarrow T^{*}\mathcal{M},$ satisfies ($I=[t_{0},t_{f}]$)
\EQ\label{7} &&\Gamma(t,s,(x,p))=(\Phi^{(t,s)}_{f_{q}}(x),(T^{*}_{x}\Phi^{(t,s)^{-1}}_{f_{q}})(p)),\nnum\\&& (x,p)\in T^{*}\mathcal{M},\EN
and $\Gamma$ is the corresponding integral flow of  $f^{T^{*},u}_{q}$.\halmos
\end{lemma}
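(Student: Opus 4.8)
The assertion is the cotangent analogue of Lemma~\ref{ll1}: the cotangent lift of the flow $\Phi^{(t,s)}_{f_{q}}$ coincides with the flow of the cotangent-lifted field $f^{T^{*},u}_{q}$. I would \emph{define} $\Gamma$ by the right-hand side of (\ref{7}), i.e. $\Gamma(t,s,(x,p)):=(\Phi^{(t,s)}_{f_{q}}(x),(T^{*}_{x}\Phi^{(t,s)}_{f_{q}})^{-1}(p))$, and then verify the three properties characterising the flow of $f^{T^{*},u}_{q}$. The initial condition is immediate: by (\ref{1122}) we have $\Phi^{(s,s)}_{f_{q}}=\mathrm{id}_{\mathcal{M}}$, whose pull back is the identity on each cotangent fibre, so $\Gamma(s,s,\cdot)=\mathrm{id}_{T^{*}\mathcal{M}}$. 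For the cocycle identity, the two-parameter flow property $\Phi^{(t,r)}_{f_{q}}=\Phi^{(t,s)}_{f_{q}}\circ\Phi^{(s,r)}_{f_{q}}$ together with contravariance of the pull back, $(\psi\circ\phi)^{*}=\phi^{*}\circ\psi^{*}$, gives $(T^{*}\Phi^{(t,r)}_{f_{q}})^{-1}=(T^{*}\Phi^{(t,s)}_{f_{q}})^{-1}\circ(T^{*}\Phi^{(s,r)}_{f_{q}})^{-1}$ over the appropriate base points, hence $\Gamma(t,r,\cdot)=\Gamma(t,s,\cdot)\circ\Gamma(s,r,\cdot)$.

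The substantive step is to show $\frac{\partial}{\partial t}\Gamma(t,s,z)\big|_{t=t_{1}}=f^{T^{*},u}_{q}(\Gamma(t_{1},s,z))$ for every $z=(x,p)\in T^{*}\mathcal{M}$ and every $t_{1}$. Using the cocycle identity just established, $\Gamma(t,s,z)=\Gamma(t,t_{1},\Gamma(t_{1},s,z))$, so differentiation at $t=t_{1}$ reduces the claim to the single case $t_{1}=s$. There, the base component of $\frac{\partial}{\partial t}\Gamma(t,s,z)\big|_{t=s}$ is $f_{q}(x,u(s))$ by the defining ODE of $\Phi^{(t,s)}_{f_{q}}$, while its fibre component is, on noting that $(T^{*}_{x}\Phi^{(t,s)}_{f_{q}})^{-1}=T^{*}_{x}\Phi^{(t,s)^{-1}}_{f_{q}}$ by contravariance, exactly the quantity defining $f^{T^{*},u}_{q}$ in (\ref{6}); comparison with the local expression (\ref{e}) then confirms $\frac{\partial}{\partial t}\Gamma(t,s,z)\big|_{t=s}=f^{T^{*},u}_{q}(z)$. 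Thus $t\mapsto\Gamma(t,s,z)$ is an integral curve of $f^{T^{*},u}_{q}$ passing through $z$ at time $s$; since (\ref{e}) exhibits $f^{T^{*},u}_{q}$ as a Carath\'eodory vector field, Lipschitz in $(x,p)$ for a.e.\ $t$, uniqueness of integral curves identifies $\Gamma$ with its flow.

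As an alternative to the coordinate-free differentiation, the same step can be carried out in canonical coordinates $(x,p)$ on $T^{*}\mathcal{M}$: writing $A(t):=D\Phi^{(t,s)}_{f_{q}}(x)$, the variational equation gives $\dot A(t)=Df_{q}(\Phi^{(t,s)}_{f_{q}}(x),u(t))\,A(t)$ with $A(s)=I$, so the fibre component $P(t):=(A(t)^{\top})^{-1}p$ of $\Gamma$ obeys, via $\frac{d}{dt}A^{-1}=-A^{-1}\dot A\,A^{-1}$, precisely the linear equation read off from (\ref{e}); together with $\dot x=f_{q}(x,u)$ this is the system generated by $f^{T^{*},u}_{q}$, and uniqueness again finishes the argument. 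I expect no genuine difficulty beyond what is already present in Lemma~\ref{ll1}; the only point requiring care is the contravariance bookkeeping -- because the cotangent lift is contravariant while $\Gamma$ must cover $\Phi^{(t,s)}_{f_{q}}$ in the forward direction, the inverse appears in $(T^{*}_{x}\Phi^{(t,s)}_{f_{q}})^{-1}$, and it is in differentiating this inverse (equivalently, in the minus sign in (\ref{e}) and the $-A^{-1}\dot A\,A^{-1}$ term) that sign errors would most easily arise.
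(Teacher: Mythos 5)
Your proposal is correct. Note that the paper itself offers no proof of this lemma --- it is quoted from \cite{Barbero} and \cite{Taringoo5} --- so there is no in-text argument to compare against; your verification (define $\Gamma$ by the right-hand side of (\ref{7}), check the initial condition and cocycle property, differentiate at $t=s$ to recover the defining expression (\ref{6}) and its local form (\ref{e}), then invoke uniqueness of integral curves) is the standard route and matches the conventions the paper uses, including the identification $(T^{*}_{x}\Phi^{(t,s)}_{f_{q}})^{-1}=T^{*}_{x}\Phi^{(t,s)^{-1}}_{f_{q}}$ stated after the lemma. Your coordinate computation with $A(t)=D\Phi^{(t,s)}_{f_{q}}(x)$ and $P(t)=(A(t)^{\top})^{-1}p$ correctly reproduces the adjoint equation $\dot{p}=-(Df_{q})^{\top}p$, which is the intended content of (\ref{e}) (the index placement in (\ref{e}) as printed is loose, but that is the paper's notation, not an error in your argument).
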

The mapping $T^{*}_{x}\Phi^{(t,s)^{-1}}_{f_{q}}$ denotes the pull back of  $\Phi^{-1}$ whose existence is guaranteed since $\Phi^{(t,s)}_{f_{q}}:\mathcal{M}\rightarrow \mathcal{M}$ is a diffeomorphism, see \cite{Barbero}.  
For a given trajectory $\lambda(t)\in T^{*}\mathcal{M}$, its variation with respect to time, $\dot{\lambda}(t)$, is  an element of $TT^{*}\mathcal{M}$. The vector field defined  in (\ref{6}) is the mapping $ f^{T^{*},u}_{q}:T^{*}\mathcal{M}\rightarrow TT^{*}\mathcal{M}$, from $\lambda(t)\in T^{*}\mathcal {M}$ to $\dot{\lambda}(t)\in TT^{*}\mathcal{M}$.
\begin{proposition}(\cite{Barbero,LeeJ})
\label{p1}
Let $f_{q}(.,u):I\times \mathcal{M}\rightarrow \mathcal{M}$ be the time dependent vector field giving rise to the associated  pair $f^{T,u}_{q},f^{T^{*},u}_{q}$; then along an integral curve of $f_{q}(.,u)$ on $\mathcal{M}$ 
\EQ \langle\Gamma,\Psi\rangle:I\rightarrow \mathds{R},\EN
is a constant map, where $\Gamma$ is an integral curve of $f^{T^{*},u}_{q}$ in $T^{*}\mathcal{M}$ and $\Psi$ is an integral curve of $f^{T,u}_{q}$ in $T\mathcal{M}$.

\end{proposition}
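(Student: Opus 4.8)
The plan is to prove invariance by exhibiting the natural fiber pairing $\langle\cdot,\cdot\rangle:T^{*}\mathcal{M}\times_{\mathcal{M}}T\mathcal{M}\rightarrow\mathds{R}$ as a first integral of the coupled flows $\Gamma$ (on $T^{*}\mathcal{M}$) and $\Psi$ (on $T\mathcal{M}$). First I would fix an initial time $s$ and initial data $(x,v)\in T_{x}\mathcal{M}$ and $(x,p)\in T^{*}_{x}\mathcal{M}$ over a common base point $x$, so that $\langle\Gamma(s,s,(x,p)),\Psi(s,s,(x,v))\rangle=\langle p,v\rangle$. By Lemmas \ref{ll1} and \ref{l3}, for every $t$ the points $\Psi(t,s,(x,v))$ and $\Gamma(t,s,(x,p))$ both lie in the fiber over $\Phi^{(t,s)}_{f_{q}}(x)$, so the pairing $\langle\Gamma(t,s,(x,p)),\Psi(t,s,(x,v))\rangle$ is well defined for all $t$ in the interval of existence; this is the quantity we must show is constant.

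The key step is the algebraic identity relating the push-forward and its inverse transpose. By Lemma \ref{l3}, $\Gamma(t,s,(x,p))=\big(\Phi^{(t,s)}_{f_{q}}(x),(T^{*}_{x}\Phi^{(t,s)}_{f_{q}})^{-1}(p)\big)$, and by Lemma \ref{ll1}, $\Psi(t,s,(x,v))=\big(\Phi^{(t,s)}_{f_{q}}(x),T\Phi^{(t,s)}_{f_{q}}(v)\big)$. Since the pull-back $T^{*}_{x}\Phi^{(t,s)}_{f_{q}}$ is by construction the adjoint of the push-forward $T\Phi^{(t,s)}_{f_{q}}$ with respect to the duality pairing, i.e. $\langle T^{*}_{x}\Phi^{(t,s)}_{f_{q}}(\beta),w\rangle=\langle\beta,T\Phi^{(t,s)}_{f_{q}}(w)\rangle$ for all covectors $\beta$ and vectors $w$, substituting $\beta=(T^{*}_{x}\Phi^{(t,s)}_{f_{q}})^{-1}(p)$ and $w=v$ yields $\langle(T^{*}_{x}\Phi^{(t,s)}_{f_{q}})^{-1}(p),T\Phi^{(t,s)}_{f_{q}}(v)\rangle=\langle p,v\rangle$, which is manifestly independent of $t$. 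This establishes the proposition.

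As a cross-check I would also verify the statement infinitesimally in the local coordinates $(x,p)$ of $T^{*}\mathcal{M}$ and $(x,v)$ of $T\mathcal{M}$, where $\langle\Gamma,\Psi\rangle=\sum_{i}p_{i}v^{i}$. Differentiating along the flows and using the tangent lift equation (\ref{22}), which gives $\dot{v}^{i}=\frac{\partial f^{u,i}_{q}}{\partial x^{j}}v^{j}$, together with the cotangent lift equation (\ref{e}), which gives $\dot{p}_{i}=-\frac{\partial f^{u,j}_{q}}{\partial x^{i}}p_{j}$, one obtains $\frac{d}{dt}\sum_{i}p_{i}v^{i}=\sum_{i,j}\left(-\frac{\partial f^{u,j}_{q}}{\partial x^{i}}p_{j}v^{i}+p_{i}\frac{\partial f^{u,i}_{q}}{\partial x^{j}}v^{j}\right)=0$ after relabelling the summation indices, so $\langle\Gamma,\Psi\rangle$ is constant.

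The main (indeed essentially the only) delicate point is the bookkeeping of base points and the precise sense in which the flow $\Gamma$ of the cotangent lift is the inverse transpose of the flow $\Psi$ of the tangent lift: one must check that the map $(T^{*}_{x}\Phi^{(t,s)}_{f_{q}})^{-1}$ appearing in Lemma \ref{l3} is exactly the dual of $T\Phi^{(t,s)}_{f_{q}}$ — equivalently, that the minus sign in (\ref{e}) is the one making the pairing a conserved quantity — and that all objects are evaluated consistently over the moving base point $\Phi^{(t,s)}_{f_{q}}(x)$. Once this identification is pinned down, both the intrinsic argument via Lemmas \ref{ll1}--\ref{l3} and the coordinate computation are routine.
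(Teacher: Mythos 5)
Your proof is correct: the paper itself states Proposition \ref{p1} as a cited result from \cite{Barbero} and \cite{Taringoo5} without reproducing a proof, and your argument --- that by Lemmas \ref{ll1} and \ref{l3} the cotangent flow is the inverse adjoint of the tangent flow, so $\langle(T^{*}_{x}\Phi^{(t,s)}_{f_{q}})^{-1}(p),T\Phi^{(t,s)}_{f_{q}}(v)\rangle=\langle p,v\rangle$ --- is exactly the standard one those references use. Your coordinate cross-check is also right, and you correctly (if silently) read the cotangent lift (\ref{e}) with the transposed Jacobian $\dot{p}_{i}=-\frac{\partial f^{u,j}_{q}}{\partial x^{i}}p_{j}$; as written in the paper the indices in (\ref{e}) are misplaced, and only the transposed form makes the pairing a first integral.
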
\halmos

\vspace{.5cm}
\textit{\textbf{Elementary Control and Tangent Perturbations}}.\\
Consider the nominal control  $u(.)$ and define the perturbed control as follows:
\EQ u_{\pi(t^{1},u_{1})}(t,\epsilon):=u_{\pi}(t,\epsilon)=\left\{ \begin{array}{cc} \quad u_{1} \quad t^{1}-\epsilon \leq t\leq t^{1}\\ u(t)\quad\mbox{elsewhere} \end{array}\right.,  \EN
where $u_{1}\in U, 0\leq \epsilon$.

Associated to $u_{\pi}(.)$ we have the corresponding state trajectory $x_{\pi}(t,\epsilon)$ on $\mathcal{M}$. It may be shown that under suitable hypotheses  of the differentiability of $x_{\pi}$ with respect to $\epsilon$ at the switching times, then $\lim_{\epsilon\downarrow 0}x_{\pi}(t,\epsilon)=x(t) $ uniformly for $t_{0}\leq t \leq t_{f}$, see \cite{Piccoli} and  \cite{Lee}. However in this paper we employ the same hypotheses of the differentiability of $x_{\pi}$ before and after switching times but must accommodate the fact that there may be  a discontinuity of $\frac{d x_{\pi}(t)}{d\epsilon}|_{\epsilon=0}$ at any switching time $t_{i}$.
The flow resulting from the perturbed control is defined by:
\EQ&&\hspace{-.5cm} \Phi_{\pi, f_{q}}^{(t,s),x}(\epsilon):[0,\tau]\rightarrow \mathcal{M},\quad x\in \mathcal{M},t,s\in [t_{0},t_{f}], \tau\in \mathds{R}_{+},\nnum\\&&\hspace{-.5cm}\EN
where $\Phi_{\pi, f_{q}}^{(t, s), x}(.)$ is the flow corresponding to the perturbed control $u_{\pi}(t,\epsilon)$, i.e. $\Phi_{\pi, f_{q}}^{(t, s), x}(\epsilon):=\Phi_{f_{q}^{u_{\pi}(t,\epsilon)}}^{(t, s)}(x(s))$.
The following lemma gives the formula of the variation of  $\Phi_{\pi, f_{q}}^{(t,s),x}(.)$ at $\epsilon=0^{+}$. 
Recall that the point $t^{1}\in (t_{0},t_{f})$ is called Lebesgue point of  $u(.)$ if, (\cite{Agra}):
\EQ\lim_{s\rightarrow t^{1}}\frac{1}{|s-t^{1}|}\int^{s}_{t^{1}}|u(\tau)-u(t^{1})|d\tau=0. \EN 
For a $u\in L_{\infty}([t_{0},t_{f}],U)$, $u$ may be modified on a set of measure zero so that all points are   Lebesgue and the value function is unchanged (see \cite{Rudin}, page 158).
\begin{lemma}(\hspace{-.01cm}\cite{Barbero})
\label{l222}
For a Lebesgue time $t^{1}$, the curve $ \Phi_{\pi, f_{q}}^{(t,s),x}(\epsilon):[0,\tau]\rightarrow \mathcal{M}$ is differentiable at $\epsilon=0$ and the corresponding tangent vector $\frac{d}{d\epsilon}\Phi_{\pi, f_{q}}^{(t^{1},s),x}|_{\epsilon=0}$ is given by
\EQ &&\frac{d}{d\epsilon}\Phi_{\pi, f_{q}}^{(t^{1},s),x}|_{\epsilon=0}=\nnum\\&&f_{q}(x(t^{1}),u_{1})-f_{q}(x(t^{1}),u(t^{1}))\in T_{x(t^{1})}\mathcal{M}.\EN\halmos
\end{lemma}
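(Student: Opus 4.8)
The plan is to run Pontryagin's classical ``needle variation'' computation in a single coordinate chart around $x(t^{1})$. First I would fix a chart of $\mathcal{M}$ containing $x(t^{1})$ and observe that, since $x(\cdot)$ is continuous and the needle interval $[t^{1}-\epsilon,t^{1}]$ shrinks to $\{t^{1}\}$, for all sufficiently small $\epsilon>0$ the arcs $s\mapsto x_{\pi}(s,\epsilon)$ and $s\mapsto x(s)$, $s\in[t^{1}-\epsilon,t^{1}]$, stay inside this chart; all subsequent manipulations are carried out in these local coordinates (this is harmless because only small $\epsilon$ is needed, and the conclusion will be a genuine tangent vector at $x(t^{1})$, hence chart independent). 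By construction of $u_{\pi}(\cdot,\epsilon)$ the perturbed flow coincides with the nominal one on $[s,t^{1}-\epsilon]$, so $\Phi_{\pi,f_{q}}^{(t^{1}-\epsilon,s),x}(\epsilon)=x(t^{1}-\epsilon)$, while on $[t^{1}-\epsilon,t^{1}]$ the perturbed trajectory solves $\dot{x}_{\pi}=f_{q}(x_{\pi},u_{1})$ from the initial value $x(t^{1}-\epsilon)$. Writing both trajectories in integral form and subtracting, the term $x(t^{1}-\epsilon)$ cancels and one is left with
\EQ x_{\pi}(t^{1},\epsilon)-x(t^{1}) &=& \int_{t^{1}-\epsilon}^{t^{1}}\big[f_{q}(x_{\pi}(s,\epsilon),u_{1})-f_{q}(x(s),u(s))\big]\,ds .\nnum\EN

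Next I would divide by $\epsilon$, let $\epsilon\downarrow 0$, and treat the two summands on the right separately. For the first, a Gronwall estimate using the compactness of $U$ and the local Lipschitz continuity of $f_{q}(\cdot,u_{1})$ in the state variable gives $\sup_{s\in[t^{1}-\epsilon,t^{1}]}\|x_{\pi}(s,\epsilon)-x(t^{1})\|\to 0$ as $\epsilon\downarrow 0$; together with continuity of $f_{q}$ this yields $\tfrac{1}{\epsilon}\int_{t^{1}-\epsilon}^{t^{1}}f_{q}(x_{\pi}(s,\epsilon),u_{1})\,ds\to f_{q}(x(t^{1}),u_{1})$. For the second summand I would add and subtract $f_{q}(x(t^{1}),u(t^{1}))$ inside the integral: the part in which $u$ is frozen at $u(t^{1})$ converges by continuity of $s\mapsto x(s)$, and the genuinely measurable remainder is controlled exactly by the Lebesgue-point hypothesis on $t^{1}$, which forces $\tfrac{1}{\epsilon}\int_{t^{1}-\epsilon}^{t^{1}}|u(s)-u(t^{1})|\,ds\to 0$ and hence, again by local Lipschitz dependence of $f_{q}$ on its control argument over the compact set $U$, $\tfrac{1}{\epsilon}\int_{t^{1}-\epsilon}^{t^{1}}f_{q}(x(s),u(s))\,ds\to f_{q}(x(t^{1}),u(t^{1}))$. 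Subtracting the two limits gives precisely $\frac{d}{d\epsilon}\Phi_{\pi,f_{q}}^{(t^{1},s),x}|_{\epsilon=0}=f_{q}(x(t^{1}),u_{1})-f_{q}(x(t^{1}),u(t^{1}))\in T_{x(t^{1})}\mathcal{M}$, which is the claim.

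The main obstacle is the rigorous justification that the difference quotient actually \emph{converges} --- that is, the differentiability assertion itself, as opposed to merely identifying the candidate limit. This is exactly where the Lebesgue-point assumption is indispensable: for a merely $L_{\infty}$ control the average $\tfrac{1}{\epsilon}\int_{t^{1}-\epsilon}^{t^{1}}f_{q}(x(s),u(s))\,ds$ need not converge at all. The auxiliary bound $\|x_{\pi}(s,\epsilon)-x(s)\|=O(\epsilon)$ uniformly on the needle, needed to pass the limit through the first integral, is a routine Gronwall argument but relies on the standing smoothness of $f_{q}$ and compactness of $U$ from Definition~\ref{d1}. I would also note that what is produced is a one-sided derivative at $\epsilon=0^{+}$, which is all that the subsequent perturbation analysis requires, and that everything above is local in $\epsilon$, so no issue of leaving the chart arises.
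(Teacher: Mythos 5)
Your argument is the standard needle-variation computation (integral form of the two trajectories on the needle interval, cancellation of $x(t^{1}-\epsilon)$, then the Lebesgue-point property to handle the nominal integrand and a boundedness/Gronwall estimate to handle the perturbed one), and it is correct; the one-sided nature of the derivative and the chart-independence of the resulting tangent vector are both properly addressed. Note that the paper itself gives no proof of Lemma~\ref{l222} --- it is imported verbatim from \cite{Barbero} --- so there is no in-paper argument to compare against; your proof is exactly the classical one that reference supplies.
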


The tangent vector $ f_{q}(x(t^{1}),u_{1})-f_{q}(x(t^{1}),u(t^{1}))$ is called the \textit{elementary perturbation vector} associated to the perturbed control $u_{\pi}$ at $(x(t),t)$.
The displacement of the tangent vectors at $x\in \mathcal{M}$ is by the push-forward defined on the vector field $f_{q}$. 

\begin{proposition}(\hspace{-.01cm}\cite{Barbero})
\label{p2}
Let $\Psi:[t^{1}, t_{f}]\rightarrow T\mathcal{M}$ be the integral curve of $f^{T,u}_{q}$ with the initial condition $\Psi(t^{1})=[f_{q}(x(t^{1}),u_{1})-f_{q}(x(t^{1}),u(t^{1}))]\in T_{x(t^{1})}\mathcal{M}$, then 
\EQ \frac{d}{d\epsilon}\Phi_{\pi, f_{q}}^{(t,t^{1}),x}|_{\epsilon=0}=\Psi(t),\quad t\in[t^{1},t_{f}].\EN\halmos
\end{proposition}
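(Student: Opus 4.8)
The plan is to show that the two curves $t\mapsto \frac{d}{d\epsilon}\Phi_{\pi,f_q}^{(t,t^1),x}|_{\epsilon=0}$ and $t\mapsto \Psi(t)$ solve the same linear initial value problem on $T\mathcal{M}$ along the nominal trajectory $x(\cdot)$, and then invoke uniqueness. First I would fix the Lebesgue time $t^1$ and note that for $t\ge t^1$ the perturbed control $u_\pi(\cdot,\epsilon)$ agrees with the nominal control $u(\cdot)$, so the perturbed flow is generated by the \emph{same} time-dependent vector field $f_q(\cdot,u(\cdot))$ as the nominal flow, merely started from the perturbed initial point $x_\pi(t^1,\epsilon)=\Phi_{\pi,f_q}^{(t^1,s),x}(\epsilon)$. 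Concretely, for $t\ge t^1$ one has the semigroup identity $\Phi_{\pi,f_q}^{(t,s),x}(\epsilon)=\Phi_{f_q^u}^{(t,t^1)}\big(\Phi_{\pi,f_q}^{(t^1,s),x}(\epsilon)\big)$; differentiating this in $\epsilon$ at $\epsilon=0$ by the chain rule gives
\begin{equation}
\frac{d}{d\epsilon}\Phi_{\pi,f_q}^{(t,t^1),x}\Big|_{\epsilon=0}
= T\Phi_{f_q^u}^{(t,t^1)}\!\left(\frac{d}{d\epsilon}\Phi_{\pi,f_q}^{(t^1,s),x}\Big|_{\epsilon=0}\right),
\end{equation}
and by Lemma \ref{l222} the argument on the right is exactly the elementary perturbation vector $f_q(x(t^1),u_1)-f_q(x(t^1),u(t^1))\in T_{x(t^1)}\mathcal{M}$.

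Next I would identify the right-hand side with $\Psi(t)$. By Lemma \ref{ll1}, the flow $\Psi$ of the tangent lift $f^{T,u}_q$ with initial condition $v_{x(t^1)}\in T_{x(t^1)}\mathcal{M}$ at time $t^1$ is precisely $\Psi(t)=T\Phi_{f_q}^{(t,t^1)}(v_{x(t^1)})$, i.e. the push-forward of the nominal flow applied to the initial tangent vector; here we take $v_{x(t^1)}=f_q(x(t^1),u_1)-f_q(x(t^1),u(t^1))$, which is exactly the initial condition $\Psi(t^1)$ prescribed in the statement. Combining this with the displayed chain-rule identity yields $\frac{d}{d\epsilon}\Phi_{\pi,f_q}^{(t,t^1),x}|_{\epsilon=0}=\Psi(t)$ for all $t\in[t^1,t_f]$, which is the claim. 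Equivalently, one can argue directly with the variational equation: differentiating $\dot x_\pi(t,\epsilon)=f_q(x_\pi(t,\epsilon),u(t))$ in $\epsilon$ shows $w(t):=\frac{d}{d\epsilon}\Phi_{\pi,f_q}^{(t,t^1),x}|_{\epsilon=0}$ satisfies the linearized equation $\dot w(t)=Df_q(x(t),u(t))\,w(t)$, which in local coordinates is precisely the $v$-component of the tangent-lift dynamics \eqref{22}, with $w(t^1)$ given by Lemma \ref{l222}; uniqueness of solutions of this linear ODE then forces $w\equiv\Psi$.

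The main obstacle is justifying the differentiation in $\epsilon$ at $\epsilon=0^+$ carefully, and in particular the validity of the chain rule / differentiation under the flow. The subtlety flagged in the text is that $\frac{d}{d\epsilon}x_\pi(t,\epsilon)|_{\epsilon=0}$ may be \emph{discontinuous} at switching times; however, since we restrict attention to the interval $[t^1,t_f]$ with $t^1$ a Lebesgue point strictly between switchings on the relevant leg (or we work leg-by-leg, applying the push-forward across each switching separately and patching together), the vector field $f_q(\cdot,u(\cdot))$ is Carathéodory-regular and $C^1$ in the state, so the standard smooth-dependence-on-initial-conditions theorem applies on each interval where $q$ is constant, and the compactness of $U$ together with $A3$-type bounds gives the uniform estimates needed to pass to the limit $\epsilon\downarrow 0$. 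Once this regularity bookkeeping is in place, the identification with $\Psi$ via Lemma \ref{ll1} and Lemma \ref{l222} is essentially formal. I would also remark that the result as stated concerns a single leg (constant $q$) on $[t^1,t_f]$; propagation of the perturbation vector \emph{through} a switching manifold (including the jump map $\zeta_{p,q}$) is handled separately in the subsequent development and is not needed here.
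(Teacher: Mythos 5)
Your proof is correct and follows the route the paper itself intends: the paper states Proposition \ref{p2} without proof (citing \cite{Barbero}), but the display immediately following it — identifying $\frac{d}{d\epsilon}\Phi_{\pi,f_{q}}^{(t,t^{1}),x}|_{\epsilon=0}$ with $T\Phi^{(t,t^{1})}_{f_{q}}$ applied to the elementary perturbation vector via Lemma \ref{ll1} — is exactly your chain-rule identity combined with Lemma \ref{l222}. Your additional remarks on the variational-equation formulation and on restricting to a single leg between switchings are consistent with the paper's setup and do not change the argument.
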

By the result above and Lemma \ref{ll1} we have
\EQ&&\frac{d}{d\epsilon}\Phi_{\pi, f_{q}}^{(t,t^{1}),x}|_{\epsilon=0, x(t)\in \mathcal{M}}=\nnum\\&&T\Phi^{(t,t^{1})}_{f_{q}}([f_{q}(x(t^{1}),u_{1})- f_{q}(x(t^{1}),u(t^{1}))])\in T_{x(t^{1})}\mathcal{M}.\nnum\\\EN
\section{Control Systems on Lie Groups}
In this section we introduce control systems on Lie groups and then extend the definition of hybrid systems above to that of hybrid systems defined on Lie groups.
 
\subsection{Lie Groups and Lie Algebras}
\vspace*{.5cm}
 
We recall that the Lie algebra $\mathcal{L}$ (see \cite{Lewis, Varad}) of a Lie group $G$ (see \cite{Lewis, Varad}) is the tangent space at the identity element $e$ with the associated Lie bracket defined on the tangent space of $G$, i.e.  $\mathcal{L}=T_{e}G$. 
A vector field $X$ on $G$ is called \textit{left invariant } if
\EQ \forall g_{1},g_{2}\in G,\quad X(g_{1}\star g_{2})=TL_{g_{1}}X(g_{2}),\EN
where $L_{g}:G\rightarrow G,\hspace{.2cm} L_{g}(h)=g\star h,\hspace{.2cm} TL_{g}:T_{g_{2}}G\rightarrow T_{g\star g_{2}}G$ which immediately imply $X(g\star e)=X(g)=TL_{g}X(e)$.
Corresponding to a left invariant vector field $X$, we define the \textit{exponential map} as follows:
\EQ exp:\mathcal{L}\rightarrow G,\quad \exp(tX(e)):=\Phi(t,X),t\in \mathds{R},\EN

where $\Phi(t,X)$ is the solution of $\dot{g}(t)=X(g(t))$ with the boundary condition $g(0)=e$. The following theorem gives the flow of a left invariant vector field with an arbitrary initial state $g\in G$.
\begin{theorem}(\hspace{-.01cm}\cite{Varad})
\label{t1}
Let $G$ be a Lie group with the corresponding Lie algebra $\mathcal{L}$, then for a left invariant vector field $X$
\EQ \Phi(t,X,g)=L_{g}\circ \exp(tX(e)),\EN
where $\Phi(t,X,g)$ is the flow of $X$ starting at $g\in G$.  
\end{theorem}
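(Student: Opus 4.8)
The plan is to exploit the defining property of left invariance together with the uniqueness of integral curves of smooth vector fields. First I would set $\gamma(t):=exp(tX(e))=\Phi(t,X)$, the integral curve of $X$ issuing from the identity, so that $\gamma(0)=e$ and $\dot{\gamma}(t)=X(\gamma(t))$ for all $t$ in the (maximal) interval of definition. Then I would introduce the curve $c(t):=L_{g}(\gamma(t))=g\star\gamma(t)$ and observe that $c(0)=g\star e=g$, so that $c$ has precisely the initial condition required of the flow of $X$ through $g$.

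Next I would differentiate $c$. Since $L_{g}:G\rightarrow G$ is smooth and does not depend on $t$, the chain rule gives $\dot{c}(t)=TL_{g}\big(\dot{\gamma}(t)\big)=TL_{g}\big(X(\gamma(t))\big)$. Invoking the left invariance of $X$, namely $X(g_{1}\star g_{2})=TL_{g_{1}}X(g_{2})$ with $g_{1}=g$ and $g_{2}=\gamma(t)$, yields $TL_{g}\big(X(\gamma(t))\big)=X(g\star\gamma(t))=X(c(t))$. Hence $c$ satisfies $\dot{c}(t)=X(c(t))$, $c(0)=g$; that is, $c$ is an integral curve of $X$ starting at $g$. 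By the uniqueness theorem for integral curves, $c(t)=\Phi(t,X,g)$ on the common interval of definition, which is exactly the asserted identity $\Phi(t,X,g)=L_{g}\circ exp(tX(e))$.

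The only point requiring care is the domain of definition: a priori $\gamma$ is defined only on some maximal open interval $(-a,a)$, whereas the statement asserts the formula for all $t\in\mathds{R}$. I expect this to be the main (mild) obstacle, and I would settle it in the standard way. The computation above shows that left translation carries integral curves of $X$ to integral curves of $X$; consequently, for $t_{0}\in(-a,a)$ one has $\gamma(t_{0}+s)=L_{\gamma(t_{0})}\gamma(s)$ wherever both sides are defined, and choosing $t_{0}$ close to $a$ lets one continue $\gamma$ beyond $a$. Iterating this shows that $\gamma$, and therefore $c$, extends to all of $\mathds{R}$, i.e. every left invariant vector field is complete. Alternatively, this completeness may simply be quoted from \cite{Varad}, in which case the two displayed computations constitute the entire proof.
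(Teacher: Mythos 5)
Your proof is correct: the paper itself gives no proof of this theorem, citing \cite{Varad}, and your argument (left-translating the integral curve through the identity, verifying via the chain rule and left invariance that $c(t)=g\star exp(tX(e))$ satisfies $\dot{c}=X(c)$ with $c(0)=g$, then invoking uniqueness of integral curves) is exactly the standard proof found in that reference. Your handling of completeness by translating the curve to extend it past the endpoint of its maximal interval is also the standard and correct way to justify that the formula holds for all $t\in\mathds{R}$.
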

A left invariant control system defined on a given Lie group $G$ is defined as follows: (see \cite{Lewis,Jurd, Card})
\EQ \dot{g}(t)=f(g(t),u)=TL_{g(t)}f(e,u),\quad g(t)\in G, u\in \mathds{R}^{u},\EN
where $f(g(t),u)$ is a left invariant vector field on $G$.
Similar to left invariant systems,  right invariant systems are defined. In this paper we only consider hybrid systems where the associated vector fields are left invariant, however  the analysis can also be applied to right invariant hybrid systems. 
\subsection{Left Invariant Optimal Control Systems}
A \textit{Bolza left invariant optimal control problem} is an optimal control problem where\\ (i): the ambient state manifold $\mathcal{M}$ is a Lie group $G$,\\
(ii): the corresponding vector field $f_{q}$ is a left invariant vector field defined on $G$ such that for any given $u\in \mathcal{U}$
\EQ \label{state33}f_{q}(.,u(.)):G\times [t_{0}, t_{f}]\rightarrow TG,\EN
and\\
(iii): the cost function is defined by
\EQ \label{cost11}J:=\int^{t_{f}}_{t_{0}}l_{q}(g(s),u(s))ds,\quad u\in \mathcal{U},\EN
where $l_{q}(g(s),u(s))$ is assumed to be left invariant i.e. $l_{q}(L_{h}g(s),u(s))=l_{q}(g(s),u(s))$.
In general, a Bolza problem can be converted to a Mayer problem using an auxiliary state variable in the dynamics, see \cite{Shaikh} and \cite{Barbero}. 
 The following lemma gives the equivalence of a Bolza problem defined on a Lie group $G$ and its Mayer extension.
 Consider a left invariant  Optimal Control Problem (OCP) defined on a Lie group $G$ with the following dynamics and cost function:
 \EQ \dot{g}(t)=f(g(t),u),\quad g(t)\in G, u\in \mathds{R}^{u},\EN
 \EQ J=\int^{t_{f}}_{t_{0}}l(g(s),u(s))ds.\EN
Then the Mayer problem associated to the optimal control problem above is defined on the Lie group $G\times \mathds{R}$ and the corresponding dynamics are left invariant.
The state space equation of the Mayer problem corresponding to the Bolza problem is as follows:
 \EQ  \left(\begin{array}{cc}\dot{g}\\\dot{z}\end{array}\right)=\left(\begin{array}{cc}f(g(t),u(t))\\l(g(t),u(t))\end{array}\right)=F(\bar{g}(t),u),\EN
 where $\bar{g}=(g,z),\quad g\in G,z\in \mathds{R}$.
 The group action defined on $G\times \mathds{R}$ is given as follows:
 \EQ (g_{1},z_{1})\bar{\star}(g_{2},z_{2})=(g_{1}\star g_{2},z_{1}+z_{2}),\EN
 where $\star$ corresponds to the group action of $G$ and $\bar{\star}$ is the group action of $G\times \mathds{R}$.
 Since $G$ is a Lie group it follows that $(G\times \mathds{R},\bar{\star})$  is also a Lie group. It remains to show that $F(\bar{g},u)$ is left invariant. The left translation on $G\times \mathds{R}$ is defined by
 \EQ L_{\bar{g}}(\bar{h})=(L_{g}h,z_{g}+z_{h}),\quad  \bar{g}=(g,z_{g}),\bar{h}=(h,z_{h}),\EN
 therefore
 \EQ TL_{\bar{g}}F(\bar{h},u)&=&TL_{g}f(h,u)\oplus l(L_{g}h,u)\nnum\\&=&f(g\star h,u)\oplus l(g\star h,u)=F(\bar{g}\star \bar{h},u),\nnum\\\EN
 which shows that  $F(\bar{g},u)$ is left invariant since $f(g,u)$ and $l(g,u)$ respectively are left invariant.

\section{The Pontryagin Minimum Principle on Lie Groups}
Optimal control problems on Lie groups have been addressed in \cite{Bro,Bro1,Jurd1,Jurd}. In this section we review the Minimum Principle results presented in \cite{Jurd1} for optimal control problems defined on a Lie group $G$. As shown in \cite{Jurd}, the left translation gives an isomorphism between $TG$ and $G\times \mathcal{L}$. Since 
$L_{g^{-1}}$ maps $g$ to $e$,  $TL_{g^{-1}}:T_{g}G\rightarrow T_{e}G=\mathcal{L}$ is the corresponding isomorphism.
 This statement also holds between $T^{*}G$ and $G\times \mathcal{L}^{*}$ where $\mathcal{L}^{*}$ is the dual space of the Lie algebra $\mathcal{L}$. 
 \begin{definition}
 The isomorphism  between $\mathcal{L}^{*}\times G$ and $T^{*}G$ is  denoted  by $\mathfrak{I}^{*}$, where
\EQ \hspace{0cm}(1)\quad \mathfrak{I}^{*}(X,g)\in T^{*}G,\quad X\in \mathcal{L}^{*},g\in G,\EN
\EQ\label{II}  \hspace{0cm}(2) \quad\mathfrak{I}^{*}(.,g)&:=&\mathfrak{I}^{*}_{g}:=T^{*}L_{g^{-1}}:\mathcal{L}^{*}\rightarrow T^{*}_{g}G \hspace{.25cm}\nnum\\&&\mbox{is a linear isomorphism}.\EN
\end{definition}\halmos

We use the equivalence $T^{*}G\approx G\times \mathcal{L}^{*}$  associated to the isomorphism above to construct  Hamiltonian functions on Lie groups.
\\

\textit{\textbf{Hamiltonian Systems on $T^{*}\mathcal{M}$ and $T^{*}G$}}.\\
By definition, for an optimal control problem defined on an $n$ dimensional  differentiable manifold $\mathcal{M}$,  a Hamiltonian function is defined as a smooth function  $H:T^{*}\mathcal{M}\times U\rightarrow \mathds{R}$, see \cite{Agra, Jurd}.  
The associated Hamiltonian vector field $\overrightarrow{H}$ is defined as follows (see \cite{Agra}):
\EQ \omega_{\lambda}(.,\overrightarrow{H})=dH,\quad \lambda\in T^{*}M,\EN
where $\omega_{\lambda}$ is the symplectic form defined on $T^{*}\mathcal{M}$ which is locally written as follows:
\EQ \omega_{\lambda}=\sum^{n}_{i=1}d\zeta_{i}\wedge dx_{i},\EN
and $(\zeta,x)$ is the local coordinate representation of $\lambda$ in $T^{*}\mathcal{M}$.

The Hamiltonian system of the ODE corresponding to $H$ is
\EQ \dot{\lambda}=\overrightarrow{H}(\lambda),\EN
where locally we have
\EQ\left\{ \begin{array}{ll} \dot{x_{i}}=\frac{\partial H}{\partial \zeta_{i}},\quad i=1,...,n, \\
        \dot{\zeta_{i}}=-\frac{\partial H}{\partial x_{i}},\quad i=1,...,n.\end{array} \right.\EN
        
    Similar to the case of Hamiltonian systems on smooth manifolds we can define  Hamiltonian functions for left invariant vector fields on the cotangent bundle of a Lie group $G$. 
 A Hamiltonian function for a  left invariant vector field $X$ on $G$ is defined by
 \EQ H_{X}(g,\lambda):=\langle \lambda, X(e)\rangle=\langle \lambda, TL_{g^{-1}}X(g)\rangle,\quad \lambda\in \mathcal{L}^{*}.\EN
 The preceding identification uses  $TT^{*}G\simeq T(G\times \mathcal{L}^{*})=(G\times \mathcal{L})\times (\mathcal{L}^{*}\times \mathcal{L}^{*})$, therefore the tangent vector at  $(g,\lambda)\in G\times \mathcal{L}^{*}$ is an element of $\mathcal{L}\times\mathcal{L}^{*}$ denoted by $T=(X,\gamma)$.  The symplectic form $\omega$ along a given curve $\Gamma(t)\in T^{*}G$  satisfies the following equation, see \cite{Agra,Jurd}:
 \EQ\hspace{-1cm}&&\omega_{\Gamma}(T_{1}(\Gamma),T_{2}(\Gamma))=\langle\gamma_{2}(t),X_{1}(t)\rangle\nnum\\&&-\langle\gamma_{1}(t),X_{2}(t)\rangle-\langle\lambda(t),[X_{1}(t),X_{2}(t)]\rangle,\EN
 where $T_{i}=(X_{i},\gamma_{i})$. Similar to Hamiltonian systems on $T^{*}\mathcal{M}$, the Hamiltonian vector field $\overrightarrow{H}$ on $G\times \mathcal{L}^{*}$ satisfies the following equation
 \EQ\label{vec} dH=\omega_{\Gamma(t)}(.,\overrightarrow{H}).\EN
 \begin{definition}
 For each $\eta,\zeta\in \mathcal{L}$ we define
 \EQ ad_{\zeta}:\mathcal{L}\rightarrow \mathcal{L},\hspace{.2cm}  ad_{\zeta}\eta=[\zeta,\eta]. \EN
 For each $\zeta,\eta\in \mathcal{L}, \gamma\in\mathcal{L}^{*}$,  $ad^{*}$ is defined by
 \EQ \langle ad^{*}_{\zeta}(\gamma),\eta\rangle:=\langle \gamma,ad_{\zeta}(\eta)\rangle.\EN
 \end{definition}\halmos
 
 For more information about the definition above see \cite{Abra, Varad}.
 
 The following theorem gives the Minimum Principle for optimal control problems defined on Lie groups.
 
 \begin{theorem}(\hspace{-.01cm}\cite{Jurd})
 For a left invariant optimal control problem defined by (\ref{state33}) and (\ref{cost11}), along the optimal state and optimal control $g^{o}(t),u^{o}(t)$, there exists a nontrivial adjoint curve $\lambda^{o}(t)\in\mathcal{L}^{*}$ such that the following equations hold:
 \EQ H(g^{o}(t),\lambda^{o}(t),u^{o}(t))\leq H(g^{o}(t),\lambda^{o}(t),u),\quad \forall u\in U,\EN
 and locally
 \EQ \label{sta}\frac{dg^{o}}{dt}=TL_{g^{o}(t)}\big(\frac{\partial H}{\partial \lambda}\big),\EN
 \EQ\label{lam}  \frac{d\lambda^{o}}{dt}=-(ad)^{*}_{\frac{\partial H}{\partial \lambda}}(\lambda^{o}(t)),\EN
 where $ H(g,\lambda,u):=\langle \lambda,TL_{g^{-1}}f(g,u)\rangle$.
 \end{theorem}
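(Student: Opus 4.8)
The plan is to run the classical geometric argument of the Pontryagin Minimum Principle on the manifold $G$ and then to transport the resulting adjoint data through the left trivialization $T^{*}G\cong G\times\mathcal{L}^{*}$ of $(\ref{II})$. First, by the preceding Lemma on the Mayer reduction of a left invariant problem, the given problem $(\ref{state33})$--$(\ref{cost11})$ lifts to an equivalent left invariant \emph{Mayer} problem (on $G\times\mathds{R}$), so it suffices to treat the case in which the cost is a terminal function $\bar h$ and the Hamiltonian is $\langle\lambda,TL_{\bar g^{-1}}F(\bar g,u)\rangle$; the running cost is then folded into the extra coordinate, and nontriviality of the multiplier is automatic because the $\mathds{R}$-component of $d\bar h$ equals $1$, so $d\bar h$ never vanishes.

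For such a Mayer problem I would adapt the elementary-perturbation and cotangent-lift machinery recalled above. Fix an optimal pair $(g^{o},u^{o})$ on $[t_{0},t_{f}]$ with $g^{o}(t_{0})=g_{0}$. For a Lebesgue point $t^{1}\in(t_{0},t_{f})$ and a value $u_{1}\in U$, form the elementary control perturbation $u_{\pi}$; by Lemma~\ref{l222}, Proposition~\ref{p2} and Lemma~\ref{ll1} the associated one-sided terminal variation is $\delta g(t_{f})=T\Phi^{(t_{f},t^{1})}_{f}\big(f(g^{o}(t^{1}),u_{1})-f(g^{o}(t^{1}),u^{o}(t^{1}))\big)\in T_{g^{o}(t_{f})}G$, and optimality forces $\langle d\bar h(g^{o}(t_{f})),\delta g(t_{f})\rangle\ge 0$. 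Set $\Lambda(t_{f}):=d\bar h(g^{o}(t_{f}))\neq 0$ and let $\Lambda(t)\in T^{*}_{g^{o}(t)}G$ be its value at $t$ under the cotangent-lifted flow of $f$; by Lemma~\ref{l3} the curve $\Lambda(\cdot)$ is an integral curve of $f^{T^{*},u^{o}}_{q}$, i.e.\ it solves the canonical adjoint equation on $T^{*}G$, and since that flow is fibrewise linear, $\Lambda(t)\neq 0$ for all $t$. Proposition~\ref{p1} (constancy of $\langle\Gamma,\Psi\rangle$ along the coupled lifted flows) identifies $\langle\Lambda(t_{f}),\delta g(t_{f})\rangle$ with $\langle\Lambda(t^{1}),f(g^{o}(t^{1}),u_{1})-f(g^{o}(t^{1}),u^{o}(t^{1}))\rangle$, so the latter pairing is $\ge 0$ for every $u_{1}\in U$ and almost every $t^{1}$.

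Next, set $\lambda^{o}(t):=\mathcal{I}_{g^{o}(t)}^{-1}\Lambda(t)\in\mathcal{L}^{*}$, which is again nowhere zero. A direct computation with the identification $\mathcal{I}$ and left invariance shows that the value of the canonical Hamiltonian is preserved, $\langle\Lambda(t),f(g^{o}(t),u)\rangle=\langle\lambda^{o}(t),f(e,u)\rangle=H(g^{o}(t),\lambda^{o}(t),u)$; since the inequality of the previous paragraph concerns only the \emph{value} of the Hamiltonian, it becomes exactly $H(g^{o}(t),\lambda^{o}(t),u^{o}(t))\le H(g^{o}(t),\lambda^{o}(t),u)$ for all $u\in U$. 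For the two differential equations, the identification $TT^{*}G\simeq(G\times\mathcal{L})\times(\mathcal{L}^{*}\times\mathcal{L}^{*})$ turns the canonical symplectic form into the Lie-Poisson form recorded above, $\omega_{\Gamma}(T_{1},T_{2})=\langle\gamma_{2},X_{1}\rangle-\langle\gamma_{1},X_{2}\rangle-\langle\lambda,[X_{1},X_{2}]\rangle$ with $T_{i}=(X_{i},\gamma_{i})$; because the left invariant $H$ depends on $(g,\lambda)$ only through $\lambda$ (and $u$), so that its $G$-derivative vanishes and $\partial H/\partial\lambda=f(e,u)$, solving $dH=\omega_{\Gamma(t)}(\cdot,\overrightarrow{H})$ for $\overrightarrow{H}=(X_{H},\gamma_{H})$ --- by comparing the two sides on test vectors $(0,\eta)$ and $(Y,0)$ --- yields $X_{H}=\partial H/\partial\lambda$, hence $\dot g^{o}=TL_{g^{o}(t)}(\partial H/\partial\lambda)$, which is $(\ref{sta})$, and, via $-\langle\lambda,[Y,X_{H}]\rangle=\langle\mathrm{ad}^{*}_{X_{H}}\lambda,Y\rangle$, that $\gamma_{H}=-(\mathrm{ad})^{*}_{\partial H/\partial\lambda}(\lambda^{o}(t))$, which is $(\ref{lam})$; the coadjoint term is precisely the contribution of the bracket term in $\omega_{\Gamma}$.

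The Lie-group-specific steps --- the trivialization, the invariance of the Hamiltonian value, and the short symplectic computation producing $\mathrm{ad}^{*}$ --- are essentially bookkeeping once $(\ref{II})$ and the form of $\omega_{\Gamma}$ are in hand; the only care needed there is to fix the sign of $\omega_{\Gamma}$ (equivalently, the sign in the defining relation of $\overrightarrow{H}$) consistently with $(\ref{sta})$--$(\ref{lam})$. The real obstacle is the classical one behind every Pontryagin-type theorem: producing a nonzero terminal covector that separates the optimal endpoint from the improving variations, and controlling the measure-theoretic (Lebesgue point) error terms in the needle-variation estimate on $G$. In the present setting those are already supplied by Lemma~\ref{l222}, Propositions~\ref{p1} and~\ref{p2}, and the Mayer reduction (which guarantees $\Lambda(t_{f})\neq 0$), so the remaining work is mainly to assemble and bookkeep these ingredients rather than to prove anything substantively new.
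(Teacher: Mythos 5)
Your argument is correct in outline, but note first that the paper itself offers no proof of this statement: it is quoted from \cite{Jurd}. The closest internal comparison is Step 1 of the proof of Theorem \ref{t22}, which re-derives the non-hybrid core of the result. There the authors propagate the needle variation by writing the flow explicitly as a right translation by $exp((t-t^{1})f(e,u^{o}))$, define the adjoint concretely as $\lambda(t)=T^{*}L_{g(t)}\circ T^{*}R_{exp((t_{f}-t)f(e,u))}(dh)$, verify $\lambda(t)=Ad^{*}_{g(t)}\lambda(0)$, and only then invoke \cite{Jurd} for the coadjoint ODE. You instead transport the terminal covector by the abstract cotangent lift (Lemma \ref{l3}, Proposition \ref{p1}) and extract both (\ref{sta}) and (\ref{lam}) from the Lie--Poisson expression for $\omega_{\Gamma}$. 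Your route is arguably more robust: the representation $g(t)=g_{0}\star exp(tf(e,u))$ used in the paper is strictly valid only for (piecewise) constant controls, whereas the cotangent-lift transport requires no such closed form; the paper's explicit formula, in exchange, makes the jump condition at the switching surface transparent, which is why it is adopted there. Two caveats on your write-up, neither of which is a gap. First, with the paper's displayed conventions for $\omega_{\Gamma}$ and $dH=\omega_{\Gamma}(\cdot,\overrightarrow{H})$, the test-vector computation actually yields $X_{H}=-\frac{\partial H}{\partial \lambda}$, so the sign discrepancy you flag is real and must be resolved by fixing the orientation of $\omega_{\Gamma}$ against the flat-case Hamilton equations the paper records. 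Second, after the Mayer reduction the nonvanishing terminal covector is $d\bar{h}=(0,1)$ on $G\times\mathds{R}$, whose $\mathcal{L}^{*}$-component may vanish identically; so ``nontrivial $\lambda^{o}\in\mathcal{L}^{*}$'' must be read as nontriviality of the extended multiplier on $\mathcal{L}^{*}\oplus\mathds{R}$, with the Hamiltonian carrying the running-cost term $\lambda_{z}\,l(e,u)$ that the bare expression $\langle\lambda,TL_{g^{-1}}f(g,u)\rangle$ in the statement suppresses --- exactly what your reduction produces.
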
\halmos

 For left invariant vector fields $\frac{\partial H}{\partial g}=0$ for a Hamiltonian function defined on $G\times \mathcal{L}^{*}$; but in general, that is to say for not necessarily left invariant vector fields, the integral curve of the Hamiltonian vector field, i.e. (\ref{sta}) and (\ref{lam}), satisfies the following equations (see \cite{Jurd}):
 \EQ \frac{dg}{dt}=TL_{g(t)}\big(\frac{\partial H}{\partial \lambda}\big),\EN
 \EQ  \frac{d\lambda}{dt}=-T^{*}L_{g(t)}\big(\frac{\partial H}{\partial g}\big)-(ad)^{*}_{\frac{\partial H}{\partial \lambda}}(\lambda(t)).\EN
Since the tangent space of $T^{*}G$ is identified with $\mathcal{L}\times \mathcal{L}^{*}$,
by the definition of the Hamiltonian $H:G\times \mathcal{L}^{*}\rightarrow \mathds{R}$, it is noted that $\frac{\partial H}{\partial \lambda}\in \mathcal{L}^{*^{*}}=\mathcal{L}$ and $\frac{\partial H}{\partial g}\in T^{*}_{g}G$.

 
 \section{Hybrid Systems on Lie Groups}
 The definition of hybrid systems on Lie groups is the specialization of that of hybrid systems given in Definition \ref{d1} where the ambient manifold $\mathcal{M}$ is replaced by a Lie group $G$. Here we only consider a hybrid system consisting of two different discrete states with the associated left invariant  vector fields $f_{q_{0}},f_{q_{1}},$ as follows:
  
 \EQ\label{1} \dot{g}(t)=f_{q_{0}}(g(t),u(t)),\hspace{.1cm} \dot{g}(t)=f_{q_{1}}(g(t),u(t)),\hspace{.1cm} u(t)\in \mathcal{U}.\EN
 The switching manifold $\mathcal{S}$ associated to the autonomous discrete state change is considered to be a submanifold  of $G$ which is by definition a regular Lie subgroup.  
 The hybrid cost function is defined by
 \EQ\label{hcost} J=\sum^{1}_{i=0}\int^{t_{i+1}}_{t_{i}}l_{q_{i}}(g_{q_{i+1}}(s),u(s))ds+h(g_{q_{L}}(t_{f})),\hspace{.05cm} u\in \mathcal{U},\EN
 where $l_{i}, i=0,1$ are left invariant smooth functions on $G$.
Similar to the proof in \cite{Taringoo5}, we apply the needle control variation in two different steps.  First, the control needle variation is applied after the optimal switching time so there is no state propagation along  the state trajectory through the switching manifold.  Second, the control needle variation is applied  before the optimal switching time. In this case there exists a state variation propagation through the switching manifold, see \cite{Shaikh}, \cite{Taringoo5}. 

Recalling  assumption \textbf{\textit{A2}} in the Bolza problem and assuming the existence of optimal controls for each pair of given switching state and switching time, let us define  a function $v:G\times (t_{0},t_{f})\rightarrow \mathds{R}$  for a hybrid system with one autonomous switching, i.e. $L=1$, as follows:
\EQ \label{v}v(g,t)=inf_{u\in \mathcal{U}}J(t_{0},t_{f},h_{0}, u),\EN
where 
\EQ g=\Phi^{(t^{-},t_{0})}_{f_{q_{0}}}(g_{0})\in \mathcal{S}\subset G.\nnum\EN

\subsection{Non-Interior Optimal Switching States}
\label{s2}
In general the hybrid value function for a Mayer type problem attains its minimum on the boundary of the attainable switching states on the switching manifold and hence is not differentiable.  In this case the discontinuity of the adjoint process in the HMP statement is given in terms of a normal vector at the switching time on the switching manifold.
In order to have a normal vector $N$ on the switching manifold we need to define a Riemannian metric on $G$. A left invariant Riemannian metric $\textbf{G}$ on $(G,\star)$ satisfies the following equation.
\EQ\textbf{G}(g)(X,Y)=\textbf{G}(h\star g)(TL_{h}(X),TL_{h}(Y)), \EN
where $X,Y\in T_{g}G$. Consider a generic inner product $\textbf{I}$ on $\mathcal{L}$, where $\textbf{I}:\mathcal{L}\times \mathcal{L}\rightarrow \mathds{R}$. The following theorem gives a Riemannian metric with respect to $\textbf{I}$ defined on $\mathcal{L}$.
\begin{lemma}(\hspace{-.01cm}\cite{Lewis})
\label{l10}
An inner product $\textbf{I}$ on $\mathcal{L}$ determines a smooth left invariant Riemannian metric $\textbf{G}$ on $G$ as follows:
\EQ \textbf{G}(g)(X,Y):=\textbf{I}(TL_{g^{-1}}X,TL_{g^{-1}}Y),\EN
where $X,Y\in T_{g}G$. \halmos 
\end{lemma}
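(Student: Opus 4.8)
The plan is to verify the three defining properties of a left invariant Riemannian metric in turn: that $\textbf{G}(g)$ is a genuine inner product on each tangent space $T_{g}G$, that the assignment $g\mapsto \textbf{G}(g)$ is smooth, and that $\textbf{G}$ satisfies the left invariance identity displayed just before the lemma. All three are essentially linear algebra together with the chain rule and the smoothness of the group operations from Definition \ref{d3}.

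First I would fix $g\in G$ and recall that $TL_{g^{-1}}:T_{g}G\rightarrow T_{e}G=\mathcal{L}$ is a linear isomorphism, with inverse $TL_{g}$, since $L_{g}\circ L_{g^{-1}}=\mathrm{id}_{G}$ implies $TL_{g}\circ TL_{g^{-1}}=\mathrm{id}$. Since $\textbf{I}$ is bilinear and symmetric, the pullback $\textbf{G}(g)(X,Y)=\textbf{I}(TL_{g^{-1}}X,TL_{g^{-1}}Y)$ inherits bilinearity and symmetry; and since $\textbf{I}$ is positive definite and $TL_{g^{-1}}$ is injective, $\textbf{G}(g)(X,X)=0$ forces $TL_{g^{-1}}X=0$, hence $X=0$. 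Thus $\textbf{G}(g)$ is an inner product on $T_{g}G$ for every $g$.

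Next, for left invariance, I would fix $h,g\in G$ and $X,Y\in T_{g}G$, and use $(h\star g)^{-1}=g^{-1}\star h^{-1}$ together with $L_{g^{-1}\star h^{-1}}=L_{g^{-1}}\circ L_{h^{-1}}$, so that $TL_{(h\star g)^{-1}}=TL_{g^{-1}}\circ TL_{h^{-1}}$, and with $TL_{h^{-1}}\circ TL_{h}=T(L_{h^{-1}}\circ L_{h})=\mathrm{id}$. Then
\[
\textbf{G}(h\star g)(TL_{h}X,TL_{h}Y)=\textbf{I}\big(TL_{g^{-1}}TL_{h^{-1}}TL_{h}X,\,TL_{g^{-1}}TL_{h^{-1}}TL_{h}Y\big)=\textbf{I}(TL_{g^{-1}}X,TL_{g^{-1}}Y)=\textbf{G}(g)(X,Y),
\]
which is exactly the required identity.

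Finally, for smoothness I would exhibit a global smooth frame in which the components of $\textbf{G}$ are constant. Choosing a basis $e_{1},\dots,e_{n}$ of $\mathcal{L}$ and letting $E_{i}$ be the left invariant vector field with $E_{i}(g)=TL_{g}e_{i}$, smoothness of group multiplication guarantees each $E_{i}$ is a smooth vector field, and $\{E_{1}(g),\dots,E_{n}(g)\}$ is a basis of $T_{g}G$ for every $g$. Since $TL_{g^{-1}}E_{i}(g)=e_{i}$, we get $\textbf{G}(g)(E_{i}(g),E_{j}(g))=\textbf{I}(e_{i},e_{j})$, a constant independent of $g$; a $(0,2)$-tensor field with smooth components in a smooth frame is smooth, so $\textbf{G}$ is a smooth left invariant Riemannian metric. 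The argument is routine; the only point requiring a little care is this smoothness step, where one must invoke the smoothness of left translation on $G$ to ensure the left invariant frame $\{E_{i}\}$ is genuinely smooth — everything else reduces to the chain rule and properties of the inner product $\textbf{I}$.
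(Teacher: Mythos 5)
Your proof is correct and is the standard argument that the paper implicitly relies on by citing \cite{Lewis} without giving its own proof: pointwise inner-product properties from the linear isomorphism $TL_{g^{-1}}$, left invariance from $TL_{(h\star g)^{-1}}=TL_{g^{-1}}\circ TL_{h^{-1}}$, and smoothness from constancy of the components in a left invariant frame. Nothing is missing.
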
 

A normal vector $N$ at the switching state $g(t_{s})$ on $\mathcal{S}$ satisfies 
\EQ \textbf{G}(g)(N,Y)=0,\hspace{.2cm}\forall Y\in T_{g(t_{s})}\mathcal{S}\subset T_{g(t_{s})}G,\EN
where by Lemma \ref{l10} we have $\textbf{I}(TL_{g^{-1}}N,TL_{g^{-1}}Y)=0$. By the linearity of the inner product $\textbf{I}$ on the vector space $\mathcal{L}$, we can defined the following one form 
\EQ D_{g}N:\mathcal{L}\rightarrow \mathds{R},\hspace{.2cm}D_{g}N:=\textbf{I}(TL_{g^{-1}}N,.)\in \mathcal{L}^{*}.\EN
The following lemma shows that the one form $\textbf{G}_{g}(N,.)$ is the pullback of $D_{g}N=\textbf{I}(TL_{g^{-1}}N,.)$ under the map $T^{*}L_{g^{-1}}$.
\begin{lemma}
\label{l11}
For a Lie group $(G,\star)$ associated with an inner product $\textbf{I}$ on $\mathcal{L}$ we have
\EQ\forall g\in G,  \hspace{.2cm}\textbf{G}(g)(N,.)=T^{*}L_{g^{-1}}D_{g}N\in T^{*}_{g}G,\EN
\end{lemma}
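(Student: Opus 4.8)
The plan is to unwind the definitions of the two one-forms and show they agree on an arbitrary tangent vector $X \in T_g G$. Recall that $\textbf{G}(g)(N,\cdot)$ is by construction the element of $T^*_g G$ sending $X \mapsto \textbf{G}(g)(N,X)$, and by Lemma \ref{l10} this equals $\textbf{I}(TL_{g^{-1}}N, TL_{g^{-1}}X)$. On the other hand, $D_g N = \textbf{I}(TL_{g^{-1}}N, \cdot) \in \mathcal{L}^*$, and pulling it back by $T^*L_{g^{-1}}$ means precomposing with $TL_{g^{-1}} : T_g G \to \mathcal{L}$. So $T^*L_{g^{-1}} D_g N$ is the one-form $X \mapsto \langle D_g N, TL_{g^{-1}} X\rangle = \textbf{I}(TL_{g^{-1}}N, TL_{g^{-1}}X)$, which is literally the same expression obtained from the left-hand side.

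The steps, in order, are: first, fix $g \in G$ and an arbitrary $X \in T_g G$; second, evaluate the left-hand side using Lemma \ref{l10} to get $\textbf{G}(g)(N,X) = \textbf{I}(TL_{g^{-1}}N, TL_{g^{-1}}X)$; third, recall the definition of the cotangent lift (pullback) map $T^*L_{g^{-1}} : \mathcal{L}^* = T^*_e G \to T^*_g G$ as the dual of $TL_{g^{-1}} : T_g G \to T_e G = \mathcal{L}$, so that for any $\alpha \in \mathcal{L}^*$ one has $\langle T^*L_{g^{-1}} \alpha, X\rangle = \langle \alpha, TL_{g^{-1}} X\rangle$; fourth, apply this with $\alpha = D_g N = \textbf{I}(TL_{g^{-1}}N, \cdot)$ to obtain $\langle T^*L_{g^{-1}} D_g N, X\rangle = \textbf{I}(TL_{g^{-1}}N, TL_{g^{-1}}X)$; fifth, compare with step two and conclude equality of the two one-forms since $X$ was arbitrary.

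The only genuine subtlety is keeping the directions of the maps straight: $L_{g^{-1}}$ sends $g$ to $e$, so its tangent $TL_{g^{-1}}$ maps $T_gG$ to $\mathcal{L}$, and consequently the pullback (cotangent lift) $T^*L_{g^{-1}}$ goes the \emph{opposite} way on covectors, namely $\mathcal{L}^* \to T^*_g G$, which is exactly what is needed for the statement to typecheck. I expect this bookkeeping — confirming that $T^*L_{g^{-1}}$ as used here is the dual of $TL_{g^{-1}}$ and hence genuinely lands in $T^*_g G$ — to be the main (and essentially only) point requiring care; once the maps are correctly identified, the equality is immediate from bilinearity of $\textbf{I}$ and the defining property of the pullback. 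I would close by remarking that the metric dual $\textbf{G}(g)(N,\cdot)$ annihilates $T_{g(t_s)}\mathcal{S}$, so its pullback representation $D_g N \in \mathcal{L}^*$ annihilates $TL_{g^{-1}} T_{g(t_s)}\mathcal{S} \subset \mathcal{L}$, which is the form in which the jump condition on the adjoint curve will be used.
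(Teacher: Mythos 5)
Your proof is correct and follows essentially the same route as the paper: evaluate both one-forms on an arbitrary $X\in T_{g}G$, use the defining property of the pullback $\langle T^{*}L_{g^{-1}}\alpha,X\rangle=\langle\alpha,TL_{g^{-1}}X\rangle$, and match the result with $\textbf{G}(g)(N,X)=\textbf{I}(TL_{g^{-1}}N,TL_{g^{-1}}X)$ from Lemma \ref{l10}. Your added bookkeeping on the direction of $T^{*}L_{g^{-1}}:\mathcal{L}^{*}\to T^{*}_{g}G$ is accurate and consistent with the paper's usage.
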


\begin{proof}
We show that for all $ X\in T_{g}G$, $\textbf{G}(g)(N,X)=\langle T^{*}L_{g^{-1}}D_{g}N,X\rangle$. 
Obviously $TL_{g^{-1}}:T_{g}G\rightarrow \mathcal{L}$, therefore
\EQ\langle T^{*}L_{g^{-1}}D_{g}N,X\rangle=\langle T^{*}L_{g^{-1}}\textbf{I}(TL_{g^{-1}}N,.),X\rangle ,\EN
By the definition of pullbacks, see \cite{Lee}, we have
\EQ\langle T^{*}L_{g^{-1}}\textbf{I}(TL_{g^{-1}}N,.),X\rangle&=&\langle \textbf{I}(TL_{g^{-1}}N,.),TL_{g^{-1}}X\rangle\nnum\\&=&\textbf{I}(TL_{g^{-1}}N,TL_{g^{-1}}X)\nnum\\&=&\textbf{G}(g)(N,X),\EN
where the second equality comes from the definition of $\textbf{I}$.
\end{proof}

The following theorem gives the HMP statement for hybrid systems defined on Lie groups in the case of non-differentiability in all directions of the value function. It is the main result of this section and will be established by a sequence of lemmas.\\
\begin{theorem}

\label{t22}
Consider a hybrid system satisfying the hypotheses \textbf{\textit{A1, A2, A3}} on a Lie group $G$  and an  embedded switching submanifold $\mathcal{S}\subset G$ with an associated inner product $\textbf{I}:\mathcal{L}\times \mathcal{L}\rightarrow \mathds{R}$. Then  corresponding to an optimal  control and optimal trajectory $(u^{o}(t),g^{o}(t))$ for a given MHOCP,  there exists a nontrivial $\lambda^{o}\in \mathcal{L}^{*}$ along the optimal state trajectory such that:
\EQ \hspace{0cm}H_{q_{i}}(g^{o}(t),\lambda^{o}(t),u^{o}(t))\leq H_{q_{i}}(g^{o}(t),\lambda^{o}(t),u_{1}),\nnum\\ \forall u_{1}\in U, t\in[t_{0},t_{f}],i=0,1, \EN
and at the optimal switching state and switching time $(g^{o}({t_{s}}),t_{s})$ we have 

\EQ\label{kirkir} \lambda^{o}(t^{-}_{s})=\lambda^{o}(t_{s})+\mu\textbf{I}(TL_{g^{o^{-1}}(t_{s})}N,.)\in \mathcal{L}^{*}.\EN
and the continuity of the Hamiltonian is satisfied as follows
\EQ\hspace{-.5cm} &&H_{q_{0}}(g^{o}(t^{-}_{s}),\lambda^{o}(t^{-}_{s}),u^{o}(t^{-}_{s}))=\nnum\\&&H_{q_{1}}(g^{o}(t_{s}),\lambda^{o}(t_{s}),u^{o}(t_{s})).\EN
The optimal adjoint variable $\lambda^{o}$ satisfies
 \EQ \label{sta1}\frac{dg^{o}}{dt}=TL_{g^{o}(t)}\big(\frac{\partial H_{q_{i}}}{\partial \lambda}\big),\quad  \frac{d\lambda^{o}}{dt}=-(ad)^{*}_{\frac{\partial H_{q_{i}}}{\partial \lambda}}(\lambda^{o}(t)),\nnum\\ t\in[t_{i},t_{i+1}), q_{i}\in Q,\EN
 where 
 \EQ H_{q_{i}}(g,\lambda,u):=\langle \lambda,TL_{g^{-1}}f_{q_{i}}(g,u)\rangle.\EN
\end{theorem}\halmos

It should be noted that in the case which the normal vector is not uniquely given, the discontinuity of the adjoint process is given by
 \EQ\lambda^{o}(t^{-}_{s})-\lambda^{o}(t_{s}))\in T^{*}L_{g^{o}(t_{s})}(T^{*^{\perp}}_{g^{0}(t^{-}_{s})}\mathcal{S}),\EN
where 
\EQ T^{*^{\perp}}_{g}\mathcal{S}:=\{\alpha\in T^{*}_{g}G,\hspace{.1cm} s.t. \hspace{.2cm} \forall X\in T_{g}\mathcal{S},\langle\alpha,X\rangle=0\}.\EN

In order to prove Theorem \ref{t22}, we employ the notion of control needle variation which has been widely used in the optimal control literature, see \cite{ Barbero, Lee, Agra}. 

 \subsection{Control Needle Variation}
 
 Similar to the control needle variation introduced in the proof of the Hybrid Maximum Principle in \cite{Shaikh}, we introduce the following control needle variation for a left invariant control system. 
\EQ u_{\pi}(t,\epsilon):=u_{\pi(\epsilon,u_{1})}(t,\epsilon)=\left\{ \begin{array}{ll} u_{1} \quad  t^{1}-\epsilon\leq t\leq t^{1}\\
        u^{o}(t)\quad \mbox{elsewhere}\end{array}  \right., \EN
        where $u_{1}\in U$.
         Let us denote the state flow of the left invariant control system $\dot{g}(t)=f(g,u)$ as $g(t)=g(t,s,g_{0})$ where $s$ is the initial time and $g_{0}$ is the initial state.
        
        Due to the needle variation, the perturbed control system is given by 
        \EQ \label{kk}\dot{g}_{(\pi,\epsilon)}(t)=f(g_{(\pi,\epsilon)}(t),u_{\pi}(t)),\quad t\in [t_{0},t_{f}].\EN
        Associated to $u_{\pi}(.,.)$ we have the corresponding state trajectory $g_{\pi}(.,.)$ on $G$. It may be shown under suitable hypotheses, $lim_{\epsilon\rightarrow 0}g_{\pi}(t,\epsilon)=g(t) $ uniformly for $t_{0}\leq t \leq t_{f}$, see \cite{Piccoli} and  \cite{Lee}. 
Following (\ref{kk}),  the flow resulting from the perturbed control is defined as:
\EQ &&g_{\pi, f}^{(t, s), g}(.):[0,\tau]\rightarrow G,\quad g\in G,t,s\in [t_{0},t_{f}], \tau\in \mathds{R}^{+},\nnum\\&& g_{\pi, f}^{(t, s), g}(\epsilon)\in G,\EN
where $g_{\pi, f}^{(t, s),g}(.)$ is the flow corresponding to the perturbed control $u_{\pi}(t,\epsilon)$, i.e. \\$g_{\pi, f}^{(t, s), g}(\epsilon):=g_{f^{u_{\pi}(t,\epsilon)}}^{(t, s)}(g(s))$.
        The following theorem gives the state variation of a left invariant control system with respect to a control needle variation.  
        \begin{lemma}
\label{l1}
For a Lebesgue time $t^{1}$, the curve \\$ g_{\pi, f}^{(t,s),g}(.):[0,\tau]\rightarrow G$ is differentiable at $\epsilon=0$ and the corresponding tangent vector $\frac{d}{d\epsilon}g_{\pi, f}^{(t^{1},s),g}|_{\epsilon=0}, $ is 
\EQ&& \hspace{-1cm}\frac{d}{d\epsilon}g_{\pi, f}^{(t^{1},s),g}|_{\epsilon=0}=TL_{g(t^{1})} \big(f(e,u_{1})-f(e,u^{o}(t^{1}))\big), \nnum\\&&s\in[t_{0},t^{1}).\EN
    \end{lemma}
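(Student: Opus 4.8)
The plan is to reduce the statement to the already-established manifold version, Lemma \ref{l222}, by exploiting left invariance of the vector field $f$. Recall that for a left invariant system $\dot g(t)=f(g(t),u)=TL_{g(t)}f(e,u)$, and recall Lemma \ref{l222}, which asserts that for a general time dependent vector field on a manifold $\mathcal M$ and a Lebesgue time $t^{1}$, the perturbed flow $\Phi_{\pi,f_q}^{(t^{1},s),x}(\epsilon)$ is differentiable at $\epsilon=0$ with
\EQ
\frac{d}{d\epsilon}\Phi_{\pi, f_{q}}^{(t^{1},s),x}\big|_{\epsilon=0}=f_{q}(x(t^{1}),u_{1})-f_{q}(x(t^{1}),u(t^{1}))\in T_{x(t^{1})}\mathcal M .
\EN
Since a Lie group is in particular a smooth manifold and the left invariant vector field $f(\cdot,u)$ is a perfectly good $C^{\infty}$ vector field on $G$ for each fixed $u\in U$, Lemma \ref{l222} applies verbatim with $\mathcal M=G$ and $x=g$. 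This immediately gives differentiability of $g_{\pi,f}^{(t^{1},s),g}(\cdot)$ at $\epsilon=0$ and the intermediate identity
\EQ
\frac{d}{d\epsilon}g_{\pi, f}^{(t^{1},s),g}\big|_{\epsilon=0}=f(g(t^{1}),u_{1})-f(g(t^{1}),u^{o}(t^{1}))\in T_{g(t^{1})}G .
\EN

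The second and final step is purely algebraic: I would rewrite the right hand side using left invariance. By definition of a left invariant vector field, $f(g(t^{1}),u_{1})=TL_{g(t^{1})}f(e,u_{1})$ and likewise $f(g(t^{1}),u^{o}(t^{1}))=TL_{g(t^{1})}f(e,u^{o}(t^{1}))$. Since $TL_{g(t^{1})}:T_{e}G\to T_{g(t^{1})}G$ is a linear map, it distributes over the difference, so
\EQ
f(g(t^{1}),u_{1})-f(g(t^{1}),u^{o}(t^{1}))=TL_{g(t^{1})}\big(f(e,u_{1})-f(e,u^{o}(t^{1}))\big),
\EN
which is exactly the claimed formula. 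The restriction $s\in[t_{0},t^{1})$ is just the requirement that the needle is inserted after the initial time $s$, so that the flow from $s$ to $t^{1}$ is well defined and the perturbation lies inside the interval of definition.

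The main (and only real) obstacle is justifying that Lemma \ref{l222} transfers without modification — i.e. that nothing about the Lie group structure or the admissibility class $\mathcal U\subset\mathcal I$ interferes with the needle variation argument. Concretely one should check: (i) that $g_{\pi,f}^{(t,s),g}(\epsilon)$ stays in $G$ and depends continuously on $\epsilon$ with $\lim_{\epsilon\to 0}g_{\pi}(t,\epsilon)=g(t)$ uniformly on $[t_{0},t_{f}]$, which is the standard continuous-dependence estimate cited from \cite{Piccoli} and \cite{Lee}; and (ii) that $t^{1}$ being a Lebesgue point of $u^{o}(\cdot)$ is enough to control the $O(\epsilon)$ expansion of the state near $t^{1}$, exactly as in the manifold case. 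Both are routine once one observes that a left invariant vector field is smooth on $G$ and that $G$ is second countable and locally Euclidean, so the classical ODE perturbation theory used in Lemma \ref{l222} applies chart-by-chart; no essentially new estimate is needed.
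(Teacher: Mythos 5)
Your proposal is correct and follows exactly the paper's own argument: the paper likewise invokes Lemma \ref{l222} to obtain the elementary perturbation vector $f(g(t^{1}),u_{1})-f(g(t^{1}),u^{o}(t^{1}))$ and then rewrites it as $TL_{g(t^{1})}\big(f(e,u_{1})-f(e,u^{o}(t^{1}))\big)$ by left invariance. Your additional remarks on why Lemma \ref{l222} transfers to $G$ are sound but not part of the paper's (very brief) proof.
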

    \begin{proof}
     This follows from the left invariance property of $f$ since, by Lemma \ref{l222}, the state variation with respect to the control needle variation is given by   
       \EQ\label{left} &&f(g(t^{1}),u_{1})-f(g(t^{1}),u^{o}(t^{1}))=\nnum\\&&TL_{g(t^{1})} \big(f(e,u_{1})-f(e,u^{o}(t^{1}))\big),\EN 
which completes the proof.
    \end{proof}
    
The following lemma gives the state variation at an arbitrary time $t$, where $t^{1}<t$, for a non-hybrid left invariant control system.
\begin{lemma}
\label{l2}
Let $g_{(\pi,\epsilon)}:[t_{0}, t_{f}]\rightarrow G$ be a solution of $\dot{g}_{(\pi,\epsilon)}(t)=f(g_{(\pi,\epsilon)}(t),u_{\pi}(t))$ then for $t^{1}<t\leq t_{f}$
\EQ&&\hspace{0cm}\frac{d}{d\epsilon}g_{\pi, f}^{(t,t^{1})}(g)|_{\epsilon=0}=TR_{\exp((t-t^{1})f(e,u^{o}))}\circ \nnum\\&&TL_{g(t^{1})} \left(f(e,u_{1})-f(e,u^{o}(t^{1}))\right)\in T_{g(t)}G,\EN
where $TR_{\exp((t-t^{1})f(e,u^{o}))}$ is the push forward of the right translation $R_{\exp((t-t^{1})f(e,u^{o}))}$ at $g(t^{1})$ and $g(t_{0})=g$.
\end{lemma}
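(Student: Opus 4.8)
The plan is to build on Lemma \ref{l1}, which already identifies the elementary perturbation vector at the Lebesgue time $t^1$ as $\xi := TL_{g(t^1)}(f(e,u_1)-f(e,u^o(t^1))) \in T_{g(t^1)}G$, and then to propagate this tangent vector forward in time along the \emph{unperturbed} flow. By Proposition \ref{p2} (together with Lemma \ref{ll1}), for $t > t^1$ the state variation $\frac{d}{d\epsilon}g_{\pi,f}^{(t,t^1),x}|_{\epsilon=0}$ equals $T\Phi^{(t,t^1)}_{f_{q}}(\xi)$, the push-forward of $\xi$ under the flow of the unperturbed left invariant vector field $f(\cdot,u^o)$. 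So the entire content of the lemma reduces to computing this push-forward explicitly on a Lie group, using Theorem \ref{t1}.

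First I would recall from Theorem \ref{t1} that the flow of the left invariant vector field $f(\cdot,u^o)$ starting at $g \in G$ is $\Phi(t,f,g) = L_g \circ \exp(tf(e,u^o))$, so in the notation here $\Phi^{(t,t^1)}_{f}(g(t^1)) = L_{g(t^1)}\circ \exp((t-t^1)f(e,u^o)) = g(t^1)\star\exp((t-t^1)f(e,u^o)) = g(t)$. The key observation is that right translation by the fixed group element $\exp((t-t^1)f(e,u^o))$ sends $g(t^1)$ to $g(t)$, i.e. $R_{\exp((t-t^1)f(e,u^o))}(g(t^1)) = g(t)$. Now I claim that the push-forward $T\Phi^{(t,t^1)}_{f}$ acting on tangent vectors at $g(t^1)$ agrees with $TR_{\exp((t-t^1)f(e,u^o))}$ acting on the same vectors. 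This is because the flow of a left invariant vector field is itself a one-parameter family of \emph{right} translations: $\Phi^{(t,t^1)}_{f}(h) = h\star\exp((t-t^1)f(e,u^o)) = R_{\exp((t-t^1)f(e,u^o))}(h)$ for every $h$ in a neighbourhood (indeed for all $h \in G$), and differentiating an equality of maps gives an equality of their tangent maps. Substituting $\xi = TL_{g(t^1)}(f(e,u_1)-f(e,u^o(t^1)))$ then yields
\[
\frac{d}{d\epsilon}g_{\pi,f}^{(t,t^1),x}\Big|_{\epsilon=0} = TR_{\exp((t-t^1)f(e,u^o))}\circ TL_{g(t^1)}\big(f(e,u_1)-f(e,u^o(t^1))\big) \in T_{g(t)}G,
\]
which is exactly the claimed formula.

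The one point requiring a little care is the justification that the state variation for $t > t^1$ is the push-forward of the elementary perturbation vector under the unperturbed flow, rather than something involving the perturbed flow; this is precisely the content of Proposition \ref{p2} combined with Lemma \ref{ll1}, applied on the interval $[t^1, t_f]$ with the control $u^o$ reinstated after the needle (valid since the needle has width $\epsilon \to 0$ and $t^1$ is a Lebesgue point). I expect the main obstacle to be purely notational: keeping the tangent-lift formalism of Section 2 consistent with the Lie-group flow description of Theorem \ref{t1}, and in particular verifying that the flow of a left invariant vector field acts on the left-hand argument as right translation — once that identity is in hand, the computation is immediate. No genuine analytic difficulty arises here; the differentiability at $\epsilon = 0$ and the uniform convergence $g_\pi(t,\epsilon)\to g(t)$ are already granted by the cited results of \cite{Piccoli} and \cite{Lee}.
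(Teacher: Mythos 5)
Your proposal is correct and follows essentially the same route as the paper: both reduce the claim to the elementary perturbation vector at $t^{1}$ given by Lemma \ref{l1}, and both exploit the fact that for $t>t^{1}$ (where $u_{\pi}=u^{o}$) the flow acts as the right translation $h\mapsto h\star \exp((t-t^{1})f(e,u^{o}))$, so that the push-forward of the flow is exactly $TR_{\exp((t-t^{1})f(e,u^{o}))}$. The only cosmetic difference is that the paper differentiates the identity $g_{(\pi,\epsilon)}(t)=g_{(\pi,\epsilon)}(t^{1})\star \exp((t-t^{1})f(e,u^{o}))$ in $\epsilon$ directly, whereas you invoke Proposition \ref{p2} and then identify $T\Phi^{(t,t^{1})}_{f}$ with the tangent map of the right translation; these are the same computation.
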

\begin{proof}
As shown in \cite{Barbero} for a given control system on a differentiable manifold $\mathcal{M}$, the state variation at time $t$ where $t^{1}<t$ is given as follows:
\EQ&&\hspace{-0cm}\frac{d}{d\epsilon}\Phi_{\pi, f}^{(t,t^{1})}(x)|_{\epsilon=0}=\nnum\\&&T\Phi^{(t,t^{1})}_{f}([f(x(t^{1}),u_{1})-f(x(t^{1}),u^{o}(t^{1}))])\in T_{x(t)}\mathcal{M},\nnum\\\EN
The push-forward of $\Phi_{ f}^{(t,t^{1})}(\cdot)$, i.e. $T\Phi^{(t,t^{1})}_{f}$, is computed along the nominal state trajectory with respect to the control $u^{o}(.)$ and is evaluated at $x(t^{1})$. For a left invariant control system evolving on $G$, based on Definition \ref{d5} and Theorem \ref{t1} we have
\EQ \hspace{-0cm}g_{(\pi,\epsilon)}(t)&=&g_{0}\circ \exp(tf(e,u_{\pi}))\nnum\\&=&g_{0}\circ \exp\big((t^{1})f(e,u_{\pi})+(t-t^{1})f(e,u_{\pi})\big).\nnum\\\EN 
Since $u_{\pi}(t)=u^{o}(t),\quad t\in[t^{1},t_{f}]$, by the one parameter subgroup property of $exp$ (see \cite{Varad}) we have 
\EQ g_{(\pi,\epsilon)}(t)=g_{(\pi,\epsilon)}(t^{1})\circ \exp((t-t^{1})f(e,u^{o})),t^{1}<t\leq t_{f}.\EN
Therefore,  by evaluating the push forward of composition maps,  we have
\EQ &&\frac{d}{d\epsilon}g_{\pi, f}^{(t,t^{1})}(g)|_{\epsilon=0}=TR_{\exp((t-t^{1})f(e,u^{o}))} \times\nnum\\&&\big(\frac{d}{d\epsilon}g_{\pi, f}^{(t^{1},s)}(g)|_{\epsilon=0}\big),\hspace{.2cm} s\in[t_{0},t^{1}),\EN
which together with  Lemma \ref{l1} and (\ref{left}) yields the statement. 
\end{proof}

We analyze the HOCP  with the cost defined in (\ref{1}) and (\ref{hcost}) by defining a differential form of the penalty function $h(.)$ which is differentiable by the hypotheses.
Let us denote \EQ dh:= \frac{\partial h}{\partial g}\in \Omega(G),\EN
where $\Omega(G)$ is the set of smooth one forms on $G$.
In order to use the methods  introduced in \cite{ Barbero, Lee, Agra}, we prove the following lemma using the optimal control $u^{o}(.)$ and the associated final state $g^{o}(t_{f})$.
We denote $t_{s}(\epsilon)$ as the associated switching time corresponding to $u_{\pi}(t,\epsilon)$ which is assumed to be differentiable with respect to $\epsilon$ for all $u\in U$.

\begin{lemma}
\label{l6}
For a Hybrid Optimal Control Problem (HOCP) defined on a Lie group $G$, at the optimal final state of the trajectory $g^{o}(t)$ we have
\EQ &&\langle \mathfrak{I}^{*^{-1}}_{g^{o}(t_{f})}\big(dh(g^{o}(t_{f}))\big),TL_{g^{o^{-1}}(t_{f})}\big(v_{\pi}(t_{f})\big)\rangle\geq 0,\hspace{.2cm}\nnum\\&&\forall v_{\pi}(t_{f})\in K_{t_{f}},\EN

where
\EQ\label{cone1} &&\hspace{-.2cm}K^{1}_{t_{f}}=\bigcup_{t_{s}\leq t<t_{f}}\bigcup_{u_{1}\in U}TR_{\exp((t_{f}-t)f_{q_{1}}(e,u^{o}))}\circ\nnum\\ &&\hspace{-.2cm} TL_{g^{o}(t)}\big(f_{q_{1}}(e,u_{1})-f_{q_{1}}(e,u^{o}(t))\big)\subset T_{g^{o}(t_{f})}G,\hspace{.1cm} t\in[t_{s},t_{f}],\nnum\\\EN
and 
\EQ\label{cone11}&&\hspace{-.6cm} K^{2}_{t_{f}}=\hspace{-.0cm} \bigcup_{t_{0}\leq t<t_{s}}\bigcup_{u_{1}\in U}TR_{\exp((t_{f}-t_{s})f_{q_{1}}(e,u^{o}))}\circ \nnum\\&&\hspace{-.6cm}TR_{\exp((t_{s}-t)f_{0}(e,u^{o}))}\circ TL_{g^{o}(t)} \big(f_{q_{0}}(e,u_{1})-f_{q_{0}}(e,u^{o}(t))\big)\nnum\\&&\hspace{-.60cm}+\hspace{-.0cm} \frac{dt_{s}(\epsilon)}{d\epsilon}|_{\epsilon=0}TR_{\exp((t_{f}-t_{s})f_{q_{1}}(e,u^{o}))}\circ \nnum\\&&\hspace{-.60cm}TL_{g^{o}(t_{s})}\big(f_{q_{1}}(e,u^{o}(t_{s}))- f_{q_{0}}(e,u^{o}(t_{s}))\big) \nnum\\&&\hspace{-.60cm}\subset T_{g^{o}(t_{f})}G,\hspace{.1cm} t\in[t_{0},t_{s}),\EN
and \EQ K_{t_{f}}=K^{1}_{t_{f}}\bigcup K^{2}_{t_{f}}.\EN
\end{lemma}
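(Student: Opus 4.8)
The plan is to exploit the optimality of $(u^{o},g^{o})$ directly: for any elementary control needle variation the perturbed trajectory costs no less than the optimal one, and passing to the limit $\epsilon\downarrow 0$ turns this inequality into the desired inequality on the cone $K_{t_f}$. Since the problem is a MHOCP the cost is $J=h(g_{q_1}(t_f))$, so I only need to track how $g_{\pi}(t_f)$ depends on $\epsilon$ to first order. The argument splits into exactly the two cases already announced in the statement: the needle is placed after the optimal switching time $t_s$ (giving $K^1_{t_f}$), or before it (giving $K^2_{t_f}$).

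First I would treat the post-switch case. For $t^1\in[t_s,t_f)$ the perturbation lives entirely in the $q_1$-phase, so Lemma \ref{l2} (applied with $f=f_{q_1}$, $s=t^1$, $t=t_f$) gives
\EQ
\frac{d}{d\epsilon}g^{(t_f,t^1),g}_{\pi,f_{q_1}}\big|_{\epsilon=0}
= TR_{\exp((t_f-t^1)f_{q_1}(e,u^{o}))}\circ TL_{g^{o}(t^1)}\big(f_{q_1}(e,u_1)-f_{q_1}(e,u^{o}(t^1))\big)=:v_\pi(t_f)\in K^1_{t_f}.\nnum
\EN
Because $g^{o}$ is optimal and $h\in C^1$, $0\le \frac{d}{d\epsilon}h(g_\pi(t_f))|_{\epsilon=0^+}=\langle dh(g^{o}(t_f)),v_\pi(t_f)\rangle$; rewriting $dh(g^{o}(t_f))=\mathcal{I}_{g^{o}(t_f)}\big(\mathcal{I}^{-1}_{g^{o}(t_f)}dh(g^{o}(t_f))\big)=T^{*}L_{g^{o^{-1}}(t_f)}\big(\mathcal{I}^{-1}_{g^{o}(t_f)}dh(g^{o}(t_f))\big)$ and using the definition of the pullback moves the $TL_{g^{o^{-1}}(t_f)}$ onto $v_\pi(t_f)$, yielding exactly the claimed pairing $\langle \mathcal{I}^{-1}_{g^{o}(t_f)}(dh(g^{o}(t_f))),\,TL_{g^{o^{-1}}(t_f)}v_\pi(t_f)\rangle\ge0$.

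For the pre-switch case $t^1\in[t_0,t_s)$ the perturbed trajectory first picks up the elementary perturbation vector at $t^1$, transports it through the $q_0$-phase up to the switching manifold by $TR_{\exp((t_s-t^1)f_{q_0}(e,u^{o}))}\circ TL_{g^{o}(t^1)}$, and then the second term of $K^2_{t_f}$ accounts for the motion of the switching time itself: a shift $t_s(\epsilon)=t_s+\epsilon\,\tfrac{dt_s}{d\epsilon}|_0+o(\epsilon)$ changes the final state along the difference of the two vector fields at the switch, namely $TL_{g^{o}(t_s)}(f_{q_1}(e,u^{o}(t_s))-f_{q_0}(e,u^{o}(t_s)))$, transported forward by $TR_{\exp((t_f-t_s)f_{q_1}(e,u^{o}))}$. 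Both contributions are then carried to $g^{o}(t_f)$ by the $q_1$-flow, which by the one-parameter subgroup property of $\exp$ (Theorem \ref{t1}) and the composition rule for push-forwards is precisely $TR_{\exp((t_f-t_s)f_{q_1}(e,u^{o}))}$, giving the total first-order displacement $v_\pi(t_f)\in K^2_{t_f}$. The same optimality-plus-$C^1$ argument as before then gives the inequality for these $v_\pi(t_f)$, and since $K_{t_f}=K^1_{t_f}\cup K^2_{t_f}$ the two cases together establish the lemma.

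The main obstacle is the pre-switch case, and specifically the careful bookkeeping of the switching-time variation: one must justify that $g_\pi(t_f)$ is differentiable in $\epsilon$ through the (transversal) switching event, identify the resulting jump term $\tfrac{dt_s}{d\epsilon}|_0\,TR_{\exp((t_f-t_s)f_{q_1}(e,u^{o}))}\circ TL_{g^{o}(t_s)}(f_{q_1}(e,u^{o}(t_s))-f_{q_0}(e,u^{o}(t_s)))$ correctly (the transversality hypothesis in Definition \ref{d0} is what makes $t_s(\epsilon)$ well defined and differentiable), and show that chaining the push-forwards $TR$ across the $q_0$- and $q_1$-segments reproduces the product appearing in (\ref{cone11}). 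This is essentially the Lie-group translation of the variational-cone construction of \cite{Shaikh,Taringoo5}, with the left-invariance of $f_{q_0},f_{q_1}$ (Lemma \ref{l1}) and Theorem \ref{t1} doing the work of replacing the abstract flows $T\Phi$ by concrete translations $TR_{\exp(\cdot)}\circ TL_{g^{o}(\cdot)}$; the only genuinely new point over the manifold case is verifying that the jump in $\tfrac{dx_\pi}{d\epsilon}$ at $t_s$ takes exactly this form.
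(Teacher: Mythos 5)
Your proposal is correct and takes essentially the same route as the paper: the paper's own proof of Lemma \ref{l6} performs exactly your final pullback manipulation to reduce the claim to $\langle dh(g^{o}(t_{f})),v_{\pi}(t_{f})\rangle\geq 0$, and then cites \cite{Taringoo5} for that inequality, which is precisely the first-order needle-variation argument (post-switch and pre-switch cases, including the switching-time term) that you spell out and that the paper itself reproduces in Steps 1--2 of the proof of Theorem \ref{t22}.
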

\begin{proof}
Based on the definition of pull backs (see \cite{Lewis, Abra}), we have
\EQ \langle \mathfrak{I}^{*^{-1}}_{g^{o}(t_{f})}\big(dh(g^{o}(t_{f}))\big),TL_{g^{o^{-1}}(t_{f})}\big(v_{\pi}(t_{f})\big)\rangle=\nnum\\\langle T^{*}L_{g^{o}(t_{f})}\circ \mathfrak{I}^{*^{-1}}_{g^{o}(t_{f})}\big(dh(g^{o}(t_{f}))\big),v_{\pi}(t_{f})\big\rangle,\EN
and since by the definition $\mathfrak{I}^{*}_{g}=T^{*}L_{g^{-1}}$, then 
\EQ &&\langle \mathfrak{I}^{*^{-1}}_{g^{o}(t_{f})}\big(dh(g^{o}(t_{f}))\big),TL_{g^{o^{-1}}(t_{f})}\big(v_{\pi}(t_{f})\big)\rangle=\nnum\\&&\langle dh(g^{o}(t_{f})),v_{\pi}(t_{f})\rangle.\EN

We apply the Taylor expansion on Riemannian manifolds (see \cite{Smith}) to  $h$. To this end, one needs to extend $v_{\pi}(t_{f})\in T_{g(t_{f})}G$ to a smooth vector field denoted by $\tilde{\mathcal{V}}_{\pi}\in \mathfrak{X}(G)$ such that $\tilde{\mathcal{V}}_{\pi}(g(t_{f}))=v_{\pi}(t_{f})$. It is  shown in \cite{Lee2} that this  extension always exists.
Employing the extended smooth vector field $\tilde{\mathcal{V}}_{\pi}\in \mathfrak{X}(G)$, we have
\EQ \label{rrr}h(g(t_{f})\exp(\theta TL_{g^{-1}(t_{f})}v_{\pi}(t_{f}))\hspace{0cm}=\hspace{0cm}h(g(t_{f}))+\nnum\\\theta (\nabla_{\tilde{\mathcal{V}}_{\pi}}h)(g(t_{f}))+\nnum\\o(\theta),  0<\theta< \theta^{*},\nnum\\\EN
where $\mathfrak{X}(G)$ is the space of smooth vector fields on $G$ and $\nabla$ is a linear connection (Cartan-Schouten (0) connection) corresponding to $\exp$ on $G$, see \cite{Mah}.
\\\\
Here we show that $K_{t_{f}}$, defined in Lemma \ref{l6}, contains all the state perturbations at $t_{f}$.
 Lemma \ref{l2} and Proposition \ref{p2} together imply that \\$K^{1}_{t_{f}}=\bigcup_{t_{s}\leq t\leq t_{f}}\bigcup_{u_{1}\in U}T\Phi^{(t_{f},t)}_{f_{q_{1}}}[f_{q_{0}}(g^{o}(t),u_{1})-f_{q_{0}}(g^{o}(t),u^{o}(t))]=\bigcup_{t_{s}\leq t<t_{f}}\bigcup_{u_{1}\in U}TR_{\exp((t_{f}-t)f_{q_{1}}(e,u^{o}))}\circ TL_{g^{o}(t)}\big(f_{q_{1}}(e,u_{1})-f_{q_{1}}(e,u^{o}(t))\big)$ contains all the state perturbations at $g^{o}(t_{f})$ for all the elementary control perturbations inserted after   $t_{s}$. 
For all the control perturbations applied at $t_{0}<t<t_{s}$, either $t_{s}(\epsilon)<t_{s}$ or $t_{s}\leq t_{s}(\epsilon)$, where $t_{s}(\epsilon)$ is the switching time corresponding to $u_{\pi}(t,\epsilon)$.

Following the results of \cite{Taringoo5}, in a local chart around $g(t_{s})$, the differentiability of $t_{s}(\epsilon)$ with respect to $\epsilon$ implies
\EQ&&\frac{d \Phi_{\pi, f_{q_{1}}}^{(t_{s}(\epsilon),t^{1}),g} }{d\epsilon}|_{\epsilon=0}=\hspace{-.1cm} T\Phi^{(t^{-}_{s},t^{1})}_{f_{q_{0}}}[f_{q_{0}}(g^{o}(t^{1}),u_{1})-\nnum\\&&\hspace{-.4cm}f_{q_{0}}(g^{o}(t^{1}),u^{o}(t^{1}))]+(\frac{d t_{s}(\epsilon)}{d \epsilon}|_{\epsilon=0}) \Big(f_{q_{0}}(g^{o}(t^{-}_{s}),u^{o}(t^{-}_{s}))-\nnum\\&&\hspace{-.4cm}f_{q_{1}}(g^{o}(t_{s}),u^{o}(t_{s}))\Big)\in T_{g^{o}(t_{s})}G,\EN
therefore by Lemma \ref{l2}
 \EQ &&\hspace{-0cm}K^{2}_{t_{f}}= \bigcup_{t_{0}< t<t_{s}}\bigcup_{u\in U}\big\{T\Phi^{(t_{f},t_{s})}_{f_{q_{1}}}\circ T\Phi^{(t^{-}_{s},t)}_{f_{q_{0}}}[f_{q_{0}}(x^{o}(t,u_{1}))\nnum\\&&\hspace{.6cm}-f_{q_{0}}(x^{o}(t),u^{o}(t))] +(\frac{dt_{s}(\epsilon)}{d\epsilon}|_{\epsilon=0}).T\Phi^{(t_{f},t_{s})}_{f_{q_{1}}}\times \nnum\\&&\hspace{.6cm}\Big( \big[f_{q_{0}}(x^{o}(t^{-}_{s}),u^{o}(t^{-}_{s}))\big]-f_{q_{1}}(x^{o}(t_{s}),u^{o}(t_{s}))\Big)\big\}\nnum\\&&\hspace{.6cm}=\bigcup_{t_{0}\leq t<t_{s}}\bigcup_{u_{1}\in U}TR_{\exp((t_{f}-t_{s})f_{q_{1}}(e,u^{o}))}\circ \nnum\\&&\hspace{-.6cm}TR_{\exp((t_{s}-t)f_{0}(e,u^{o}))}\circ TL_{g^{o}(t)} \big(f_{q_{0}}(e,u_{1})-f_{q_{0}}(e,u^{o}(t))\big)\nnum\\&&\hspace{-.60cm}+\hspace{-.0cm} \frac{dt_{s}(\epsilon)}{d\epsilon}|_{\epsilon=0}TR_{\exp((t_{f}-t_{s})f_{q_{1}}(e,u^{o}))}\circ \nnum\\&&\hspace{-.60cm}TL_{g^{o}(t_{s})}\big(f_{q_{1}}(e,u^{o}(t_{s}))- f_{q_{0}}(e,u^{o}(t_{s}))\big)\subset T_{g^{o}(t_{f})}G,\nnum\\&& t\in(t_{0},t_{s}),\EN
contains all the state variations at $g^{o}(t_{f})$ corresponding to all elementary control perturbations at $t\in(t_{0},t_{s})$. 
Since $K_{t_{f}}$ contains all the state perturbations at  $g^{o}(t_{f})$,  choosing $v_{\pi}(t_{f})\in K_{t_{f}}\subset T_{x(t_{f})}G$ implies that at least at one particular time, one particular elementary control variation $\big(u_{\pi(t^{1}(v_{\pi}), u_{1}(v_{\pi}))}(t,\epsilon),$ where $u_{1}(v_{\pi})$ is the needle control resulting in the control variation $u_{\pi}(t,\epsilon)$\big) results in the final state variation  $v_{\pi}(t_{f})\in K_{t_{f}}$.
Note that choosing $\epsilon=\theta$,  $h\left(g^{o}(t_{f})\exp(\theta TL_{g^{-1}(t_{f})}v_{\pi}(t_{f})\right)$ and $h(g_{\epsilon}(t_{f}))$, where $g_{\epsilon}(t_{f})$ is the final state  curve obtained with respect to $\epsilon$ variation, are equal to first order since they have  the same first order derivative with respect to $\epsilon$.  By the construction of $u_{\pi}(t,\epsilon)\in \mathcal{U}$, $g_{\epsilon}(t_{f})$ is a curve in the reachable set of the hybrid system at $t_{f}$. The minimality of $x^{o}(t_{f})$ consequently implies that $h(x^{o}(t_{f}))\leq h(x_{\epsilon}(t_{f}))$; then $h(g_{\epsilon}(t_{f}))- h\left(g^{o}(t_{f})\exp(\epsilon TL_{g^{-1}(t_{f})}v_{\pi}(t_{f})\right)=o(\epsilon)$ together with (\ref{rrr}) implies
\EQ\label{rr} 0\leq(\nabla_{\tilde{\mathcal{V}}_{\pi}}h)(g^{o}(t_{f})),\quad \tilde{\mathcal{V}}_{\pi}(g^{o}(t_{f}))=v_{\pi}(t_{f}), \nnum\\\forall v_{\pi}(t_{f})\in K_{t_{f}}.\EN
For the smooth function $h:\mathcal{M}\rightarrow R$, the properties of the linear connections (see \cite{Lee3}) imply 
\EQ \tilde{\mathcal{V}}_{\pi}(h)(g^{o}(t_{f}))&=&\big(\nabla_{\tilde{\mathcal{V}}_{\pi}}h\big)(g^{o}(t_{f}))\nnum\\&=&\sum^{n}_{i=1}v^{i}_{\pi}(t_{f})\frac{\partial h}{\partial g_{i}}(g^{o}(t_{f})),\EN
where the second equality uses local coordinates.
Therefore by the definition of $dh$ we have 
\EQ  \big(\nabla_{\tilde{\mathcal{V}}_{\pi}}h\big)(g^{o}(t_{f}))=\langle dh(g^{o}(t_{f})),v_{\pi}(t_{f})\rangle,\EN
which implies 
\EQ\langle dh(g^{o}(t_{f})),v_{\pi}(t_{f})\rangle\geq 0, \quad\forall v_{\pi}(t_{f})\in K_{t_{f}}, \EN
and completes the proof.
\end{proof}

 The following lemma gives the relation between  $\textbf{G}(g^{o}(t_{s}))(N,.)\in T^{*}_{g^{o}(t_{s})}G $ and any tangent vector $X\in T_{g^{o}(t_{s})}\mathcal{S}\subset T_{g^{o}(t_{s})}G$.
\begin{lemma}
\label{l5}
Consider  an autonomous  HOCP consisting of  two different regimes separated by  a $k$ dimensional  embedded switching manifold $\mathcal{S}\subset G$;  then at the optimal switching state $g^{o}(t_{s})\in \mathcal{S}$  and switching time $t_{s}$ we have 
\EQ\label{vv} &&\langle \mathfrak{I}^{*^{-1}}_{g^{o}(t_{s})}\Big(\textbf{G}(g^{o}(t_{s}))(N,.)\Big),TL_{g^{o^{-1}}(t_{s})}X\rangle=0, \hspace{.2cm}\nnum\\&&\forall X\in T_{g^{o}(t_{s})}\mathcal{S}.\EN
\end{lemma}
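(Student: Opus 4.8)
The plan is to recognize that (\ref{vv}) is, once the left-translation identifications are unwound, just the defining property of the normal vector $N$ to $\mathcal{S}$ with respect to the left invariant metric $\textbf{G}$, transported to the Lie algebra. First I would use the identification (\ref{II}), namely $\mathcal{I}_{g}=T^{*}L_{g^{-1}}:\mathcal{L}^{*}\rightarrow T^{*}_{g}G$, so that $\mathcal{I}^{-1}_{g^{o}(t_{s})}$ is precisely the inverse of the pullback $T^{*}L_{g^{o^{-1}}(t_{s})}$. Then I would apply Lemma \ref{l11} at the point $g=g^{o}(t_{s})$, which asserts $\textbf{G}(g^{o}(t_{s}))(N,.)=T^{*}L_{g^{o^{-1}}(t_{s})}D_{g^{o}(t_{s})}N$ with $D_{g^{o}(t_{s})}N=\textbf{I}(TL_{g^{o^{-1}}(t_{s})}N,.)\in\mathcal{L}^{*}$; applying $\mathcal{I}^{-1}_{g^{o}(t_{s})}$ to both sides and cancelling the pullback against its inverse yields $\mathcal{I}^{-1}_{g^{o}(t_{s})}\big(\textbf{G}(g^{o}(t_{s}))(N,.)\big)=D_{g^{o}(t_{s})}N$.

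It then remains to pair this covector with the vector $TL_{g^{o^{-1}}(t_{s})}X\in\mathcal{L}$. Using the definition of $D_{g^{o}(t_{s})}N$ and then Lemma \ref{l10} (the correspondence between the inner product $\textbf{I}$ on $\mathcal{L}$ and the metric $\textbf{G}$ on $G$), one obtains
\[
\langle D_{g^{o}(t_{s})}N,\, TL_{g^{o^{-1}}(t_{s})}X\rangle
=\textbf{I}\big(TL_{g^{o^{-1}}(t_{s})}N,\, TL_{g^{o^{-1}}(t_{s})}X\big)
=\textbf{G}(g^{o}(t_{s}))(N,X).
\]
Since the hybrid dynamics force $g^{o}(t_{s})\in\mathcal{S}$ and $X\in T_{g^{o}(t_{s})}\mathcal{S}$, while $N$ is by construction $\textbf{G}$-normal to $\mathcal{S}$ at $g^{o}(t_{s})$ (such an $N$ exists because $\mathcal{S}$ is a proper embedded submanifold of $G$, so $T_{g^{o}(t_{s})}\mathcal{S}$ is a proper subspace of $T_{g^{o}(t_{s})}G$), the right-hand side vanishes, which is exactly (\ref{vv}).

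I do not expect any genuine obstacle here; the single point requiring care is the bookkeeping of the three linear maps involved --- the pushforward $TL_{g^{o^{-1}}(t_{s})}$, the pullback $T^{*}L_{g^{o^{-1}}(t_{s})}=\mathcal{I}_{g^{o}(t_{s})}$, and the inverse of the latter --- together with checking that the optimal switching configuration is indeed a point of $\mathcal{S}$, so that the notion of a normal vector is meaningful there. Optimality of $(u^{o},g^{o})$ is not used in the lemma itself; it is recorded only because this identity will subsequently be combined with the cone inequality of Lemma \ref{l6} at the optimal switching state.
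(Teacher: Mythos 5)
Your proposal is correct and follows essentially the same route as the paper's own (very terse) proof, which simply asserts $\langle \mathcal{I}^{-1}_{g^{o}(t_{s})}(\textbf{G}(g^{o}(t_{s}))(N,.)),TL_{g^{o^{-1}}(t_{s})}X\rangle=\textbf{G}(g^{o}(t_{s}))(N,X)=0$ and defers to the earlier reference. You merely make explicit the intermediate identifications via Lemma \ref{l11} and Lemma \ref{l10} that the paper leaves implicit.
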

\begin{proof}
The proof is immediate since
\EQ  \langle \mathfrak{I}^{*^{-1}}_{g^{o}(t_{s})}\Big(\textbf{G}(g^{o}(t_{s}))(N,.)\Big),TL_{g^{o^{-1}}(t_{s})}X\rangle=\nnum\\ G(g^{o}(t_{s}))(N,X)=0.\EN
\end{proof}

Here we give the proof for the HMP theorem on $G$. For simplicity of notation we simply denote the optimal trajectory by $g(t)\in G$.

      \begin{proof}
       \textit{Step 1}: All the analyses here are performed along the optimal state trajectory $g^{o}(.)$, however, for simplicity of notation the superscript $o$ is omitted for the optimal state trajectory $g^{o}(.)$.
        First consider $t_{s}<t^{1},$ where the needle variation is applied at time $t^{1}$. As shown in \cite{Taringoo5}, we have
        \EQ\label{2}0\leq \langle  dh,v_{\pi}(t_{f})\rangle,\quad \forall v_{\pi}\in K_{t_{f}},\EN
where $dh\in T^{*}_{g(t_{f})}G$.
As mentioned before, the cotangent bundle of the Lie group is identified by $G\times \mathcal{L}$ therefore
\EQ \mathfrak{I}^{*^{-1}}_{g(t_{f})}(dh)\in\mathcal{L}^{*}.\EN
By employing (\ref{2}) and the results of Proposition \ref{p1}, we have
\EQ&&\hspace{-.5cm} 0\leq\langle \mathfrak{I}^{*^{-1}}_{g(t_{f})}(dh),TL_{g^{-1}(t_{f})}\circ TR_{\exp((t_{f}-t^{1})f_{q_{1}}(e,u^{o}))}\circ \nnum\\&&\hspace{-.5cm}  TL_{g(t^{1})}\big(f_{q_{1}}(e,u_{1})-f_{q_{1}}(e,u^{o}(t^{1}))\rangle.\EN
The flow of the left invariant system on $G$ results in
\EQ g^{o}(t_{f})=L_{g(t)}\exp((t_{f}-t)f_{q_{1}}(e,u)),\hspace{.2cm}t\in[t_{s},t_{f}),\EN
then by the vector space properties of $\mathcal{L}$ and one parameter subgroups property of $\exp$ we have
\EQ g(t)=L_{g^{o}(t_{f})}\exp(-(t_{f}-t)f_{q_{1}}(e,u)),\EN  

which by the definition of $\mathfrak{I}^{*}_{g}$ given in (\ref{II}) finally gives 
\EQ&&0\leq \langle T^{*}L_{g(t^{1})}\circ T^{*}R_{\exp((t_{f}-t^{1})f_{q_{1}}(e,u))} (dh),\nnum\\&&f_{q_{1}}(e,u_{1})-f_{q_{1}}(e,u^{o}(t^{1}))\rangle.\EN
Therefore $\forall u\in U$
\EQ &&\hspace{-1cm}\langle  T^{*}L_{g(t^{1})}\circ T^{*}R_{\exp((t_{f}-t^{1})f_{q_{1}}(e,u))} (dh),f_{q_{1}}(e,u^{o}(t^{1}))\rangle\leq\nnum\\ &&\hspace{-1cm}\langle  T^{*}L_{g(t^{1})}\circ T^{*}R_{\exp((t_{f}-t^{1})f_{q_{1}}(e,u))} (dh),f_{q_{1}}(e,u_{1})\rangle,\nnum\\\EN
and 
\EQ  T^{*}L_{g(t^{1})}\circ T^{*}R_{\exp((t_{f}-t^{1})f_{q_{1}}(e,u))} (dh)\in \mathcal{L}^{*}.\EN
The adjoint variable is then defined by 
\EQ&&\lambda(t)=  T^{*}L_{g(t)}\circ T^{*}R_{\exp((t_{f}-t)f_{q_{1}}(e,u))} (dh)\in\mathcal{L}^{*},\nnum\\&& t_{s}\leq t\leq t_{f}.\EN
\textit{Step 2}: Second consider $t_{0}\leq t^{1}< t_{s}$ where $t^{1}$ is the needle variation time. 
For a given switching time $t_{s}$, the differential form of the normal vector  is then given by $\textbf{G}(g(t_{s}))(N,.)\in T^{*}_{g(t_{s})}G$.
Here we have two possibilities, $(i)$: $t_{s}\leq  t_{s}(\epsilon)$ and $(ii)$: $t_{s}(\epsilon)< t_{s}$.
The corresponding control needle variations for these two possibilities are given as follows:
\EQ (i):t_{s}\leq  t_{s}(\epsilon),\quad u_{\pi}(t,\epsilon)=\left\{ \begin{array}{cc} \quad \hspace{-1cm}u^{o}(t) \quad \hspace{.2cm}t\leq t^{1}-\epsilon\\ \hspace{.2cm}u_{1}\quad  \hspace{.6cm}t^{1}-\epsilon\leq t\leq t^{1}\\ \hspace{-.5cm}u^{o}(t) \quad \hspace{.2cm}t^{1}< t \leq t_{s}\\u^{o}(t_{s})\quad t_{s}\leq t < t_{s}(\epsilon) \end{array}\right.,\nnum\EN
and
\EQ (ii):t_{s}(\epsilon)< t_{s},\quad u_{\pi}(t,\epsilon)=\left\{ \begin{array}{cc} \quad \hspace{-1cm}u^{o}(t) \quad \hspace{.2cm}t\leq t^{1}-\epsilon\\ \hspace{.2cm}u_{1}\quad  \hspace{.6cm}t^{1}-\epsilon\leq t\leq t^{1}\\ \hspace{-.0cm}u^{o}(t) \quad \hspace{.3cm}t^{1}< t < t_{s}(\epsilon)\\u^{o}(t_{s})\quad t_{s}(\epsilon)\leq t \leq t_{s} \end{array}\right..\nnum\EN
Notice that $u^{o}(t_{s})$ in $(i)$ corresponds to $f_{q_{0}}$ under the optimal control and in $(ii)$ corresponds to $f_{q_{1}}$ under the optimal control. Following Lemmas 4.2 and 4.3 in \cite{Taringoo5},  in the case $t_{s}(\epsilon)<t_{s}=t^{o}_{s}$, we have 
\EQ\label{llq} &&\hspace{-.7cm}\frac{d g_{\pi, f_{q_{0}}}^{(t_{s}(\epsilon),t^{1}),g(t^{1}))} }{d\epsilon}|_{\epsilon=0}=(\frac{d t_{s}(\epsilon)}{d \epsilon}|_{\epsilon=0}) TL_{g(t_{s})}\big(f_{q_{0}}(e,u^{o}(t_{s}))\big)+\nnum\\&&\hspace{-.7cm}TR_{\exp(t_{s}-t^{1})f_{q_{0}}(e,u^{o})}\circ TL_{g(t^{1})}\big(f_{q_{0}}(e,u)-f_{q_{0}}(e,u^{o}(t^{1}))\big)\nnum\\&&\subset T_{g(t_{s})}G.\EN
And for  the case in which $t_{s}<t_{s}(\epsilon) $ we have 
\EQ\label{llqq} &&\hspace{-.7cm}\frac{d g_{\pi, f_{q_{0}}}^{(t_{s}(\epsilon),t^{1}),g(t^{1}))} }{d\epsilon}|_{\epsilon=0}=-(\frac{d t_{s}(\epsilon)}{d \epsilon}|_{\epsilon=0}) TL_{g(t_{s})}\big(f_{q_{0}}(e,u^{o}(t_{s}))\big)\nnum\\&&\hspace{-.7cm}+TR_{\exp(t_{s}-t^{1})f_{q_{0}}(e,u^{o})}\circ TL_{g(t^{1})}\big(f_{q_{0}}(e,u)-f_{q_{0}}(e,u^{o}(t^{1}))\big)\nnum\\&&\subset T_{g(t_{s})}G.\EN
The differentiability of $t_{s}(.)$ with respect to $\epsilon$ is established in \cite{Taringoo5}, Lemma 4.2.
Equation (\ref{vv}) together with Lemma \ref{l2} implies
\EQ \label{las}&&\hspace{-.5cm}\frac{d t_{s}(\epsilon)}{d \epsilon}|_{\epsilon=0}=-\langle \mathfrak{I}^{*^{-1}}_{g(t_{s})}\Big(\textbf{G}(g(t_{s}))(N,.)\Big),f_{q_{0}}(e,u^{o}(t_{s}))\rangle^{-1}\nnum\\&&\hspace{-.5cm}+\langle T^{*}L_{g(t^{1})}\circ T^{*}R_{\exp((t_{s}-t^{1})f(e,u))} \nnum\\&&\big(\textbf{G}(g(t_{s}))(N,.)\big),f_{q_{0}}(e,u_{1})-f_{q_{0}}(e,u^{o}(t^{1}))\rangle,\EN
since $g_{\pi, f_{q_{0}}}^{(t_{s}(.),t^{1},g(t^{1}))}:[0,\epsilon]\rightarrow \mathcal{S}$ and $\frac{d g_{\pi, f_{q_{0}}}^{(t_{s}(\epsilon),t^{1}),g(t^{1}))} }{d\epsilon}|_{\epsilon=0}\in T_{g(t_{s})}\mathcal{S}\subset  T_{g(t_{s})}G $.
In the second case
\EQ \label{las1}&&\hspace{-.5cm}\frac{d t_{s}(\epsilon)}{d \epsilon}|_{\epsilon=0}=\langle \mathfrak{I}^{*^{-1}}_{g(t_{s})}\big(\textbf{G}(g(t_{s}))(N,.)\big),f_{q_{0}}(e,u^{o}(t_{s}))\rangle^{-1}\times\nnum\\&&\hspace{-.5cm}\langle T^{*}L_{g(t^{1})}\circ T^{*}R_{\exp((t_{s}-t^{1})f(e,u))} \big(\textbf{G}(g(t_{s}))(N,.)\big),\nnum\\&&f_{q_{0}}(e,u_{1})-f_{q_{0}}(e,u^{o}(t^{1}))\rangle.\EN
In order to obtain the state variation at $t_{s}$, in case (ii), we use the push-forward of the combination of the flows before and after $t_{s}$ as follows:
\EQ\label{kk1}&&\hspace{-.7cm}\frac{dg_{\pi, f_{q_{1}}}^{(t_{s},t_{s}(\epsilon))} \circ g_{\pi, f_{q_{0}}}^{(t_{s}(\epsilon),t^{1},g(t^{1}))} }{d\epsilon}|_{\epsilon=0}=TR_{\exp(t_{s}-t^{1})f_{q_{0}}(e,u^{o})}\nnum\\&&\hspace{-.7cm}\circ TL_{g(t^{1})}\big( f_{q_{0}}(e,u_{1})-f_{q_{0}}(e,u^{o}(t^{1}))\big)\nnum\\&&\hspace{-.7cm}+\frac{d t_{s}(\epsilon)}{d \epsilon}|_{\epsilon=0}TL_{g(t_{s})}\big( f_{q_{0}}(e,u^{o}(t^{1}))-f_{q_{1}}(e,u^{o}(t^{1}))\big)\nnum\\&&\in T_{g(t_{s})}G,\EN
and for case (i)
 \EQ\label{kk2}&&\hspace{-.7cm}\frac{d g_{\pi, f_{q_{1}}}^{(t_{s}(\epsilon),t_{s})} \circ g_{\pi, f_{q_{0}}}^{(t_{s},t^{1},g(t^{1}))}}{d\epsilon}|_{\epsilon=0}=TR_{\exp(t_{s}-t^{1})f_{q_{0}}(e,u^{o})}\nnum\\&&\hspace{-.7cm}\circ TL_{g(t^{1})}\big( f_{q_{0}}(e,u_{1})-f_{q_{0}}(e,u^{o}(t^{1}))\big)+\frac{d t_{s}(\epsilon)}{d \epsilon}|_{\epsilon=0}\nnum\\&&\hspace{-.7cm}\times TL_{g(t_{s})}\big(f_{q_{1}}(e,u^{o}(t^{1}))- f_{q_{0}}(e,u^{o}(t^{1}))\big)\in T_{g(t_{s})}G,\nnum\\\EN
 where the differentiability of $t_{s}(\epsilon)$ with respect to $\epsilon$ for hybrid systems on Riemannian manifolds is established in \cite{Taringoo5}. 
The final state variation at the final time $t_{f}$ is now given as follows:

\EQ\label{kj}\hspace{.5cm}&& \frac{d g_{\pi, f_{q_{1}}}^{(t_{f},t^{1},g(t^{1}))}(\epsilon)}{d \epsilon}|_{\epsilon=0}=TR_{\exp(t_{f}-t_{s})f_{q_{1}}(e,u^{o})}\times \nnum\\&&\frac{d g_{\pi, f_{q_{1}}}^{(t_{s}(\epsilon),t_{s})} \circ g_{\pi, f_{q_{0}}}^{(t_{s},t^{1},g(t^{1}))}}{d\epsilon}|_{\epsilon=0}.\EN

Therefore 
\EQ \label{las2}&&\hspace{-0cm} 0\leq\langle dh(g(t_{f})),TR_{\exp(t_{f}-t_{s})f_{q_{1}}(e,u^{o})} \nnum\\&&\times\big[ \frac{d t_{s}(\epsilon)}{d \epsilon}|_{\epsilon=0}TL_{g(t_{s})}\big(f_{q_{1}}(e,u^{o}(t_{s}))- f_{q_{0}}(e,u^{o}(t_{s}))\big)
\nnum\\&&+TR_{\exp(t_{s}-t^{1})f_{q_{0}}(e,u^{o})}\circ TL_{g(t^{1})}\big(f_{q_{0}}(e,u_{1})-\nnum\\&& f_{q_{0}}(e,u^{o}(t^{1}))\big)\big]\rangle\hspace{.1cm},
\EN
Hence
\EQ \label{las22}&&\hspace{-0cm} 0\leq \langle dh(g(t_{f})),TR_{\exp(t_{f}-t_{s})f_{q_{1}}(e,u^{o})} \nnum\\&&\times\big[ -\langle \mathfrak{I}^{*^{-1}}_{g(t_{s})}\Big(\textbf{G}(g(t_{s}))(N,.)\Big),f_{q_{0}}(e,u^{o}(t_{s}))\rangle^{-1}\nnum\\&&\hspace{-0cm}\times\langle T^{*}L_{g(t^{1})}\circ T^{*}R_{\exp((t_{s}-t^{1})f(e,u))} \Big(\textbf{G}(g(t_{s}))(N,.)\Big),\nnum\\&&f_{q_{0}}(e,u_{1})-f_{q_{0}}(e,u^{o}(t^{1}))\rangle\nnum\\&&\times TL_{g(t_{s})}\big(f_{q_{1}}(e,u^{o}(t_{s}))- f_{q_{0}}(e,u^{o}(t_{s}))\big)
\nnum\\&&+TR_{\exp(t_{s}-t^{1})f_{q_{0}}(e,u^{o})}\circ TL_{g(t^{1})}\big(f_{q_{0}}(e,u_{1})-\nnum\\&& f_{q_{0}}(e,u^{o}(t^{1}))\big)\big]\rangle\hspace{.1cm},
\EN
equivalently
\EQ&&\hspace{-0cm}0\leq -\langle \mathfrak{I}^{*^{-1}}_{g(t_{s})}\Big(\textbf{G}(g(t_{s}))(N,.)\Big),f_{q_{0}}(e,u^{o}(t_{s}))\rangle^{-1}\nnum\\&&\hspace{-0cm}\times\langle T^{*}L_{g(t^{1})}\circ T^{*}R_{\exp((t_{s}-t^{1})f(e,u))} \Big(\textbf{G}(g(t_{s}))(N,.)\Big),\nnum\\&&f_{q_{0}}(e,u_{1})-f_{q_{0}}(e,u^{o}(t^{1}))\rangle \nnum\\&&\hspace{-0cm}\times \langle dh(g^{o}(t_{f})), TR_{\exp(t_{f}-t_{s})f_{q_{1}}(e,u^{o})}\circ TL_{g(t_{s})}\nnum\\&&\big(f_{q_{1}}(e,u^{o}(t_{s}))- f_{q_{0}}(e,u^{o}(t_{s}))\big)\rangle\nnum\\&&\hspace{-0cm}+\langle dh(g(t_{f})),TR_{\exp(t_{f}-t_{s})f_{q_{1}}(e,u^{o})}\nnum\\&&\circ TR_{\exp(t_{s}-t^{1})f_{q_{0}}(e,u^{o})}\circ TL_{g(t^{1})}\big(f_{q_{0}}(e,u_{1})-\nnum\\&& f_{q_{0}}(e,u^{o}(t^{1}))\big)\rangle,\EN
Let us denote $\mu $ by 
\EQ &&\hspace{-.5cm}\mu=-\langle \mathfrak{I}^{*^{-1}}_{g(t_{s})}\Big(\textbf{G}(g(t_{s}))(N,.)\Big),f_{q_{0}}(e,u^{o}(t_{s}))\rangle^{-1} \nnum\\&& \langle dh(g^{o}(t_{f})), TR_{\exp(t_{f}-t_{s})f_{q_{1}}(e,u^{o})}\nnum\\&& TL_{g(t_{s})}\big(f_{q_{1}}(e,u^{o}(t_{s}))- f_{q_{0}}(e,u^{o}(t_{s}))\big)\rangle,\EN
therefore
\EQ \label{ww}&&\hspace{-.5cm}0\leq\langle dh(g(t_{f})),TR_{\exp(t_{f}-t_{s})f_{q_{1}}(e,u^{o})}\circ\nnum\\&& TR_{\exp(t_{s}-t^{1})f_{q_{0}}(e,u^{o})}\circ TL_{g(t^{1})}\nnum\\&&\hspace{-.4cm}\big(f_{q_{0}}(e,u_{1})- f_{q_{0}}(e,u^{o}(t^{1}))\big)\rangle+\mu\langle T^{*}L_{g(t^{1})}\nnum\\&&\circ T^{*}R_{\exp((t_{s}-t^{1})f(e,u))} \Big(\textbf{G}(g(t_{s}))(N,.))\Big),f_{q_{0}}(e,u_{1})-\nnum\\&&f_{q_{0}}(e,u^{o}(t^{1}))\rangle.\EN
Similar to step 1 we have

\EQ \label{www}&&\hspace{0cm} \langle dh(g(t_{f})),TR_{\exp(t_{f}-t_{s})f_{q_{1}}(e,u^{o})}\circ\nnum\\&& TR_{\exp(t_{s}-t^{1})f_{q_{0}}(e,u^{o})}\circ TL_{g(t^{1})}\nnum\\&&\hspace{-.4cm}\big(f_{q_{0}}(e,u_{1})- f_{q_{0}}(e,u^{o}(t^{1}))\big)\rangle=\nnum\\&&\langle T^{*}L_{g(t^{1})}\circ T^{*}R_{\exp(t_{s}-t^{1})f_{q_{0}}(e,u^{o})}\nnum\\&&\circ T^{*}R_{\exp(t_{f}-t_{s})f_{q_{1}}(e,u^{o})}\big(dh(g(t_{f}))\big),\big(f_{q_{0}}(e,u_{1})-\nnum\\&& f_{q_{0}}(e,u^{o}(t^{1}))\big)\rangle,\EN

Combining (\ref{ww}) and (\ref{www}) we have
\EQ &&0\leq \langle T^{*}L_{g(t^{1})}\circ T^{*}R_{\exp(t_{s}-t^{1})f_{q_{0}}(e,u^{o})}\circ\nnum\\&& T^{*}R_{\exp(t_{f}-t_{s})f_{q_{1}}(e,u^{o})}\big(dh(g(t_{f}))\big),\nnum\\&&\hspace{-.4cm}\big(f_{q_{0}}(e,u_{1})- f_{q_{0}}(e,u^{o}(t^{1}))\big)\rangle+\mu\langle T^{*}L_{g(t^{1})}\nnum\\&&\circ T^{*}R_{\exp((t_{s}-t^{1})f(e,u))} \Big(\textbf{G}(g(t_{s}))(N,.)\Big),f_{q_{0}}(e,u_{1})-\nnum\\&&f_{q_{0}}(e,u^{o}(t^{1}))\rangle.\EN
The adjoint process $\lambda$ is defined as follows:
\EQ &&\hspace{-.5cm}\lambda(t)= T^{*}L_{g(t)}\circ T^{*}R_{\exp(t_{s}-t)f_{q_{0}}(e,u^{o})}\circ\nnum\\&& T^{*}R_{\exp(t_{f}-t_{s})f_{q_{1}}(e,u^{o})}\big(dh(g(t_{f}))\big)\nnum\\&&\hspace{-.5cm}+\mu T^{*}L_{g(t)}\circ T^{*}R_{\exp((t_{s}-t)f(e,u))} \Big(\textbf{G}(g(t_{s}))(N,.)\Big).\nnum\\\EN
At time $t=t_{s}$ we have 
\EQ \lambda(t^{-}_{s})=\lambda(t_{s})+\mu T^{*}L_{g(t_{s})}(\textbf{G}(g(t_{s}))(N,.))\in \mathcal{L}^{*}.\EN
The proof for the continuity of Hamiltonians follows from the results of \cite{Taringoo5}.
It remains to show 
\EQ \label{sta11}&&\frac{dg}{dt}=TL_{g(t)}\big(\frac{\partial H_{q_{i}}}{\partial \lambda}\big),\quad  \frac{d\lambda}{dt}=-(ad)^{*}_{\frac{\partial H_{q_{i}}}{\partial \lambda}}(\lambda(t)),\nnum\\&&t\in[t_{i},t_{i+1}), q_{i}\in Q.\EN
The first part of (\ref{sta11}) is obvious by the definition of $H_{q_{i}}:=\langle \lambda,TL_{g^{-1}(t)}f_{q_{i}}(g(t),u)\rangle$, since $f_{q_{i}}$ is left invariant and  $\frac{dg}{dt}=TL_{g(t)}\circ TL_{g^{-1}(t)}f_{q_{i}}(g(t),u)=f_{q_{i}}(g(t),u)$.
To prove the second relation in (\ref{sta11}), for a given $g\in G$, we employ the \textit{conjugate map} $I_{g}:G\rightarrow G$ which is given as follows (see \cite{Abra, Lewis}):
\EQ I_{g}(h)=g\star h\star g^{-1}.\EN
The \textit{adjoint map} $Ad_{g}:\mathcal{L}\rightarrow \mathcal{L}$ is defined by
\EQ Ad_{g}=TI_{g}=TL_{g}\circ TR_{g^{-1}},\EN
where the dual of the adjoint map $Ad^{*}_{g}$ is calculated as $Ad^{*}_{g}=T^{*}L_{g}\circ T^{*}R_{g^{-1}}$.  
As is obtained in step 1, $\lambda(t)=  -T^{*}L_{g(t)}\circ T^{*}R_{\exp((t_{f}-t)f(e,u))} (dh)\in\mathcal{L}^{*},$ then in order to establish the second relation, in (\ref{sta11}), it is enough to show that $\lambda(t)=Ad^{*}_{g(t)}(\lambda(0))$ where without loss of generality we set $t_{s}=0$ and $\lambda(0)=\lambda(t_{s})$ (see \cite{Jurd}, Theorem 5, Chapter 12). Therefore, to prove $\lambda(t)=Ad^{*}_{g(t)}(\lambda(0))$, one needs to show 
\EQ \label{sho}&&\hspace{-.7cm}T^{*}L_{g(t)}\circ T^{*}R_{\exp((t_{f}-t)f(e,u))} (dh)=\nnum\\&&\hspace{-.7cm}T^{*}L_{g(t)}\circ T^{*}R_{g^{-1}(t)}\circ T^{*}L_{g(0)}\circ T^{*}R_{\exp((t_{f})f(e,u))} (dh).\nnum\\\EN
Employing the group operation we have
\EQ g(t_{f})&=&g(0)\star \exp(t_{f}f_{q_{1}}(e,u^{o}))\nnum\\&=& g(0)\star g(t)\star g^{-1}(t)\star \exp(t_{f}f_{q_{1}}(e,u^{o})),\nnum\\\EN
and also
\EQ g(t_{f})=g(t)\star \exp((t_{f}-t)f_{q_{1}}(e,u^{o})),\EN
then
\EQ &&\hspace{0cm}R_{\exp((t_{f}-t)f_{q_{1}}(e,u^{o}))}(g(t))= \nnum\\&&R_{ \exp(t_{f}f_{q_{1}}(e,u^{o}))}\circ L_{g(0)}\circ R_{g^{-1}(t)}(g(t)),\quad \forall g(t)\in G,\nnum\\\EN
which implies
\EQ &&T^{*}R_{\exp((t_{f}-t)f_{q_{1}}(e,u^{o}))}=\nnum\\&&T^{*}R_{g^{-1}(t)}\circ T^{*}L_{g(0)}\circ T^{*}R_{\exp(t_{f}f_{q_{1}}(e,u^{o}))},\EN
which shows (\ref{sho}). As shown in \cite{Jurd}, Theorem 5, Chapter 12, differentiation of $\lambda(t)=Ad^{*}_{g(t)}(\lambda(0))$ with respect to $t$ implies 
\EQ \frac{d \lambda}{dt}=-\big((ad)^{*}\frac{dg}{dt}\big)\lambda(t)=-\big((ad)^{*}\frac{\partial H_{i}}{\partial \lambda}\big)\lambda(t),\EN
and this completes the proof. The analogous argument holds for $\lambda(t),\quad t_{0}\leq t< t_{s}$.
\end{proof}
We note that in the case of controlled switching hybrid systems, the adjoint variable $\lambda^{o}$ is continuous at the optimal switching time, i.e. (\ref{kirkir}) changes to $\lambda^{o}(t^{-}_{s})=\lambda^{o}(t_{s})$.  
\subsection{Interior Optimal Switching States}

Here we specify a hypothesis for MHOCP which expresses the HMP statement based on a differential form of the hybrid value function.\\\\
\textbf{\textit{A4}}: For an MHOCP, the value function $v(g,t),\hspace{.2cm} g\in G,t\in (t_{0},t_{f})$, is assumed to be differentiable at the optimal switching state $g^{o}(t^{-}_{s})$ in the switching manifold $\mathcal{S},$ where the optimal switching state is an interior point of the attainable switching states on the switching manifold.\\\\
We note that $\textbf{\textit{A4}}$ rules out MHOCPs derived from BHOCPs (see Lemma \ref{l3}).
The following theorem gives the HMP statement for an accessible  MHOCP satisfying \textbf{\textit{A4}}.

\begin{theorem}
\label{t2}
Consider a hybrid system satisfying the hypotheses presented in \textbf{\textit{A1-A4}} on a Lie group $G$  and an  embedded switching submanifold $\mathcal{S}\subset G$. Then  corresponding to  the  optimal  control and optimal state trajectory $u^{o}(t),g^{o}(t)$,  there exists a nontrivial $\lambda^{o}\in \mathcal{L}^{*}$ along the optimal state trajectory such that:
\EQ H_{q_{i}}(g^{o}(t),\lambda^{o}(t),u^{o}(t))\leq H_{q_{i}}(g^{o}(t),\lambda^{o}(t),u_{1}),\nnum\\ \forall u_{1}\in U, t\in[t_{0},t_{f}],i=0,1, \EN
and at the optimal switching state and switching time $g^{o}({t_{s}}),t_{s}$ we have 

\EQ \lambda^{o}(t^{-}_{s})=\lambda^{o}(t_{s})+\mu T^{*}L_{g(t_{s})}(dv(g^{o}_{s},t_{s}))\in \mathcal{L}^{*}.\EN
and the continuity of the Hamiltonian is given as follows
\EQ\hspace{-.5cm} &&H_{q_{0}}(g^{o}(t^{-}_{s}),\lambda^{o}(t^{-}_{s}),u^{o}(t^{-}_{s}))=\nnum\\&&H_{q_{1}}(g^{o}(t_{s}),\lambda^{o}(t_{s}),u^{o}(t_{s})).\EN
The adjoint variable $\lambda$ satisfies
 \EQ \frac{dg}{dt}=T_{e}L_{g(t)}\big(\frac{\partial H_{q_{i}}}{\partial \lambda}\big),\quad\label{lam1}  \frac{d\lambda}{dt}=-(ad)^{*}_{\frac{\partial H_{q_{i}}}{\partial \lambda}}(\lambda(t)),\nnum\\ t\in[t_{i},t_{i+1}), q_{i}\in Q,\EN
 where 
 \EQ H_{q_{i}}(g,\lambda,u):=\langle \lambda,TL_{g^{-1}}f_{q_{i}}(g,u)\rangle,\EN
and
\EQ dv(g^{o}(t^{-}_{s}),t_{s})=\frac{\partial v(g^{o}_{s},t^{-}_{s})}{\partial g}\in T^{*}_{g^{o}(t^{-}_{s})}G.\EN
\end{theorem}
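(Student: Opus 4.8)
The plan is to reproduce the two-step needle-variation argument used for Theorem~\ref{t22}, replacing throughout the normal covector $\textbf{G}(g^o(t_s))(N,\cdot)$ by the differential $dv(g^o(t_s^-),t_s)$ of the value function, which is now available by hypothesis \textbf{\textit{A4}}. In \emph{Step~1} the needle is applied at a Lebesgue time $t^1$ with $t_s<t^1<t_f$; this is literally the first step of the proof of Theorem~\ref{t22}: by Lemma~\ref{l6} the resulting final-state variation lies in the cone $K^1_{t_f}$, optimality of $g^o$ yields $0\le\langle dh(g^o(t_f)),v_\pi(t_f)\rangle$ for all such variations, and pulling $dh$ back through the left and right translations that generate the flow of $f_{q_1}$ produces the Hamiltonian minimization for $q_1$ on $[t_s,t_f)$ with $\lambda(t):=T^*L_{g(t)}\circ T^*R_{\exp((t_f-t)f_{q_1}(e,u^o))}\big(dh(g(t_f))\big)$.

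\emph{Step~2} treats a needle at $t^1$ with $t_0\le t^1<t_s$, and this is where \textbf{\textit{A4}} is used. Using Lemma~\ref{l1} and Lemma~\ref{l2}, together with the $\epsilon$-differentiability of $t_s(\epsilon)$ established in \cite{Taringoo5}, one writes the state variation induced at the switching instant and propagates it to $g^o(t_f)$ along $f_{q_1}$, exactly as in the derivation of $(\ref{las})$--$(\ref{kj})$. Because $g^o(t_s^-)$ is an \emph{interior} point of the attainable switching states and $v$ is differentiable there, the cost accrued from the switching manifold onward equals, to first order, $\langle dv(g^o(t_s^-),t_s),\delta g_s\rangle$ for the induced switching-state variation $\delta g_s$; matching this against the $dh$-propagated terms and collecting the contribution of $dt_s(\epsilon)/d\epsilon$ into a scalar $\mu$ --- precisely as in $(\ref{ww})$--$(\ref{las22})$ --- gives, for every $u_1\in U$, an inequality $0\le\langle\lambda(t^1),f_{q_0}(e,u^o(t^1))-f_{q_0}(e,u_1)\rangle$, where $\lambda(t)$ is obtained by transporting $dh(g(t_f))$ back through the flows of $f_{q_1}$ and then $f_{q_0}$ via $T^*L_{g(t)}\circ T^*R_{\exp((t_s-t)f_{q_0}(e,u^o))}\circ T^*R_{\exp((t_f-t_s)f_{q_1}(e,u^o))}$, plus the extra term $\mu\,T^*L_{g(t)}\circ T^*R_{\exp((t_s-t)f_{q_0}(e,u^o))}\big(dv(g^o_s,t_s)\big)$. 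This is the Hamiltonian minimization for $q_0$ on $[t_0,t_s)$, and evaluating $\lambda$ from both sides at $t=t_s$ yields the jump relation
\[
\lambda^o(t_s^-)=\lambda^o(t_s)+\mu\,T^*L_{g(t_s)}\big(dv(g^o_s,t_s)\big).
\]

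The adjoint ODE on each regime is obtained exactly as at the end of the proof of Theorem~\ref{t22}: rewriting the product of left and right translations through the conjugation map $I_g$ shows that $\lambda(t)=Ad^*_{g(t)\star g^{-1}(t_i)}(\lambda(t_i))$ on $[t_i,t_{i+1})$, which by the cited result of \cite{Jurd} is equivalent to $\frac{d\lambda}{dt}=-(ad)^*_{\partial H_{q_i}/\partial\lambda}(\lambda(t))$, while $\frac{dg}{dt}=TL_{g(t)}(\partial H_{q_i}/\partial\lambda)$ is immediate from left-invariance and the definition of $H_{q_i}$. Continuity of the Hamiltonian at $t_s$ is obtained by letting the perturbation act only through the switching time (i.e.\ taking $u_1=u^o$): the resulting first-order change of the optimal cost must vanish, which forces $H_{q_0}(g^o(t_s^-),\lambda^o(t_s^-),u^o(t_s^-))=H_{q_1}(g^o(t_s),\lambda^o(t_s),u^o(t_s))$. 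Nontriviality of $\lambda^o$ follows since $dh$ and $dv$ cannot both vanish along the optimal trajectory under the standing hypotheses.

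The main obstacle is Step~2: one must rigorously chain the differentiability of $v$ at the interior switching state (hypothesis \textbf{\textit{A4}}) with the $\epsilon$-differentiability of $t_s(\cdot)$ and the left-invariant flow-variation formulas of Lemmas~\ref{l1}--\ref{l2}, all while keeping careful track of exactly which left and right translations transport the elementary perturbation vector $f_{q_0}(e,u_1)-f_{q_0}(e,u^o(t^1))$ from $T_eG$ to $T_{g^o(t_f)}G$. Once this bookkeeping is in place, the extraction of $\lambda$, the jump relation, the Hamiltonian system, and the Hamiltonian continuity are routine and parallel the proof of Theorem~\ref{t22}.
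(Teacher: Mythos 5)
Your proposal is correct and follows essentially the same route as the paper: the paper's own proof of Theorem~\ref{t2} is simply the observation that the argument of Theorem~\ref{t22} goes through verbatim with the normal covector $\textbf{G}(g)(N,\cdot)$ replaced by $dv(g,t)$, which is exactly the substitution you carry out (in considerably more detail than the paper records). The only caveat is that your justification for why $dv$ can play the role of the normal covector --- namely that optimality at an \emph{interior} attainable switching state forces $dv$ to annihilate $T_{g^{o}(t_{s})}\mathcal{S}$, the analogue of Lemma~\ref{l5} --- is left implicit in the paper, so spelling it out as you do is a faithful (indeed slightly more complete) rendering of the intended argument.
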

\begin{proof}
The proof is a repetition of the proof of Theorem \ref{t22} where $\textbf{G}(g)(N,.)$  is replaced by  $dv(g,t)$, where $\textbf{G}$ is the Riemannian metric associated with the inner product $\textbf{I}$, see Lemma \ref{l10}.  As shown in Theorem \ref{t22}, the adjoint process discontinuity is given by 
\EQ \lambda^{o}(t^{-}_{s})&=&\lambda^{o}(t_{s})+\mu T^{*}L_{g(t_{s})}dv(g(t_{s}),t_{s}).\EN
\end{proof}

\section{Exp-Gradient HMP Algorithm}
In this section we introduce an algorithm which is based upon the HMP algorithm first introduced in \cite{Shaikh} and then extended on Riemannian manifolds in \cite{Taringoo8}. The algorithm presented in \cite{Taringoo8} is an extension of the Steepest Descent Algorithm along the geodesics on Riemannian manifolds. As known (see \cite{Jost}), geodesics are defined as length minimizing curves on Riemannian manifolds. The solution of the Euler-Lagrange variational problem associated with the length minimizing problem shows that  all the geodesics on $\mathcal{M}$ connecting $\gamma(a),\gamma(b)\in \mathcal{M}$ must satisfy the following system of ordinary differential equations:
\EQ \label{geo}\ddot{x_{i}}(s)+\sum^{n}_{j,k=1}\Gamma^{i}_{j,k}\dot{x_{j}}(s)\dot{x_{k}}(s)=0,\quad i=1,...,n,\EN
where 
\EQ\label{cris} \Gamma^{i}_{j,k}=\frac{1}{2}\sum^{n}_{l=1}g^{il}(g^{\mathcal{M}}_{jl,k}+g^{\mathcal{M}}_{kl,j}-g^{\mathcal{M}}_{jk,l}),\quad g^{\mathcal{M}}_{jl,k}=\frac{\partial}{\partial x_{k}}g^{\mathcal{M}}_{jl},\nnum\\\EN
where $g^{\mathcal{M}}$ is the Riemannain metric corresponding to $\mathcal{M}$ and
 all the indices $i,j,k$ here run from $1$ up to $n=dim(\mathcal{M})$ and $[g^{ij}]=[g^{\mathcal{M}}_{ij}]^{-1}$.
 
 In order to introduce the gradient of the value function on a Lie group $G$ we employ the notion of inner product on a finite dimensional Lie algebra $\mathcal{L}$ defined in Section \ref{s2}.
 For a given value function $v:G\rightarrow \mathds{R}$ on a Lie group $G$ we have 
 \EQ dv|_{g}:=\frac{\partial v}{\partial g}\in T^{*}_{g}G.\EN
 The gradient of $v$, i.e. $\nabla v$, is defined by
 \EQ \langle dv, X_{g}\rangle=\textbf{G}(g)(\nabla v,X_{g}),\hspace{.2cm} \forall X_{g}\in T_{g}G,\EN
 which can be written as
 \EQ \label{ak} \langle dv, X_{g}\rangle&\hspace{-.1cm}=&\hspace{-.1cm}\langle dv, TL_{g}X\rangle=\langle T^{*}L_{g}dv, X\rangle\nnum\\&\hspace{-.1cm}=&\hspace{-.1cm}\textbf{I}(TL_{g^{-1}}\nabla v,TL_{g^{-1}}X_{g}),\hspace{.2cm}\forall X_{g}\in T_{g}G.\nnum
 \\\EN
 We call $TL_{g^{-1}}\nabla v$ the \textit{projected gradient} of $v$ on $\mathcal{L}$.
 Similar to the geodesic gradient flow defined on Riemannian manifold $\mathcal{M}$ in \cite{Taringoo8}, we introduce \textit{Exp-Gradient Flow } on Lie groups as follows:
   \begin{definition}\label{d4} (\textit{Exp-Gradient Flow}) 
  Let $\theta^{0}=0$, and $g(\theta^{0})=g^{0}\in G $, then for all $0 \leq k$ and all $g^{k}$ such that $TL_{g^{k^{-1}}}\nabla v(g^{k})\neq 0$, define 
  \EQ\label{13}&& \gamma_{g^{k}}(\theta)=g^{k}(\theta):=\hspace{0cm}g^{k}\star \exp(-\theta TL_{g^{k^{-1}}}\nabla v(g^{k})), \nnum\\ &&\theta\in[\theta^{k},\theta^{k+1}),\quad g(\theta)\in G,\EN
  where
  \EQ \label{114}&&g^{k+1}=g^{k}(\theta^{k+1}),\hspace{.2cm}\theta^{k+1}=\sup\limits_{s}\{s; \frac{dv(g(t))}{dt}\leq 0,\nnum\\&& t\in [\theta^{k},s), s\in[\theta^{k},\theta^{k}+1)\}.\EN\end{definition}
  
Over the interval of existence $[0,\omega)$ we denote the total flow induced by (\ref{13})) as
\EQ \label{3333} \varphi(\theta,g^{0})=\Pi^{n}_{i=1}\psi_{i}(\theta^{i-1},\theta^{i},g^{i-1})\circ\psi_{n}(\theta^{n},\theta,g^{n}),\EN
where  \EQ \psi_{i}(\theta^{i-1},\theta^{i},g^{i-1})=\gamma_{g^{i-1}}(\theta^{i}-\theta^{i-1}),\nnum\\\hspace{2cm}\gamma_{g^{0}}(\theta^{1}-\theta^{0})=\gamma_{g^{0}}(\theta^{1}),\theta^{0}=0,\EN
 $\theta^{i}-\theta^{i-1}$ is the elapsed time between the switching times $\theta^{i}$, $\theta^{i-1}$ to the next iteration and $n$ is the index number of the last switching  before the instant $\theta$.
By  the continuity of geodesic flows $\{\psi_{i}, 1\leq i<\infty\}$, $\varphi$ is a continuous map on $[0,\omega)$.
In the notation of topological dynamics, and in particular LaSalle Theory (see e.g. \cite{peter,LaSalle}), the limit set of the initial state $x^{0}$ is denoted as $\Omega(g^{0})$, where 
\EQ \label{444} y\in \Omega (g^{0})\Rightarrow \exists \theta_{n},n\geq 1,\quad s.t. \quad \lim_{n\rightarrow \infty}g(\theta_{n})=y,\nnum\\\EN 
when $ \lim_{n\rightarrow \infty}(\theta_{n})= \omega$.
Note the sequence $\{\theta_{n}\}$ is in general distinct from $\{\theta^{n}\}$.\\

\textbf{\textit{H1}}: There exists  $0<b<\infty$ such that the associated sublevel set $ \mathcal{N}_{b}=\{g\in G;\quad  v(g)< b\}$ is (i) open (ii) connected, (iii) contains a strict local minimum $g_{*}$ which is the only local minimum in $\mathcal{N}_{b}$, (iv) $\mathcal{N}_{b}$ has compact closure and (v) $ \mathcal{N}_{g_{*}}\subset\mathcal{N}_{b}$.\\

Without loss of generality, we assume $ \mathcal{N}_{g_{*}}\subset \mathcal{N}_{b-\epsilon}$ for some $\epsilon>0$, then by selecting $g^{0}\in \mathcal{N}_{g_{*}}\subset \mathcal{N}_{b-\epsilon}\subset \mathcal{N}_{b}$ we prove $\omega= \infty$ by the following lemma:
\begin{lemma}
For an initial state $g^{0}\in \mathcal{N}_{g_{*}}$, the existence interval of the flow defined in (\ref{3333}) goes to $\infty$.
\end{lemma}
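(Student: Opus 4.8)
The plan is to show that along the Exp-Gradient flow starting at $g^{0}\in \mathcal{N}_{g_{*}}$, the trajectory $g(\theta)$ remains in the compact set $\overline{\mathcal{N}_{b}}$ for all $\theta$ in the maximal interval of existence $[0,\omega)$, and then to invoke the standard escape-time argument: a solution of an ODE on a manifold that stays in a fixed compact set cannot cease to exist in finite time, hence $\omega=\infty$. So the argument has two ingredients — invariance of a compact sublevel set, and the classical blow-up criterion — and the work is entirely in the first.

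First I would establish that $v$ is nonincreasing along the flow. By construction (\ref{13})--(\ref{114}), on each piece $[\theta^{k},\theta^{k+1})$ the curve $\gamma_{g^{k}}$ is the exponential curve generated by $-TL_{g^{k^{-1}}}\nabla v(g^{k})$, and $\theta^{k+1}$ is defined precisely as the supremum of times $s$ for which $\frac{dv(g(t))}{dt}\le 0$ on $[\theta^{k},s)$; moreover a short computation using (\ref{ak}) shows that at $t=\theta^{k}$ one has $\frac{d}{dt}v(g(t))|_{t=\theta^{k}} = \langle dv|_{g^{k}}, \dot g^{k}(\theta^{k})\rangle = -\textbf{I}\big(TL_{g^{k^{-1}}}\nabla v(g^{k}),TL_{g^{k^{-1}}}\nabla v(g^{k})\big) = -\|TL_{g^{k^{-1}}}\nabla v(g^{k})\|_{\textbf{I}}^{2} \le 0$, with strict inequality whenever $TL_{g^{k^{-1}}}\nabla v(g^{k})\neq 0$. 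Hence each switching interval has positive length and $v(g(\theta^{k+1}))\le v(g(\theta^{k}))\le \dots \le v(g^{0}) < b-\epsilon$. Since $v$ is also nonincreasing \emph{within} each interval by the definition of $\theta^{k+1}$, we get $v(g(\theta)) < b$ for every $\theta\in[0,\omega)$, i.e. $g(\theta)\in \mathcal{N}_{b}$ for all $\theta$.

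Next, since by hypothesis \textbf{\textit{H1}} the sublevel set $\mathcal{N}_{b}$ has compact closure in $G$, the entire trajectory $\{g(\theta):\theta\in[0,\omega)\}$ lies in the compact set $\overline{\mathcal{N}_{b}}$. The right-hand side of the flow equation on each piece is a smooth (left-invariant) vector field, and on the compact set $\overline{\mathcal{N}_{b}}$ the generating tangent vector $TL_{g^{k^{-1}}}\nabla v(g^{k})$ — hence the induced velocity field — is uniformly bounded; equivalently, in any fixed finite atlas covering $\overline{\mathcal{N}_{b}}$ the solution cannot leave every compact coordinate set in finite time. Were $\omega<\infty$, the escape-from-compacta characterization of the maximal existence interval would force $g(\theta)$ to eventually leave $\overline{\mathcal{N}_{b}}$, contradicting $g(\theta)\in\mathcal{N}_{b}$ for all $\theta<\omega$. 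Therefore $\omega=\infty$.

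The main obstacle is not the blow-up step, which is routine once invariance is in hand, but handling the concatenated/switched structure of the flow in (\ref{3333}): one must check that the switching times $\theta^{k}$ do not accumulate to a finite value (a Zeno-type phenomenon), for otherwise the trajectory could be confined to $\overline{\mathcal{N}_{b}}$ yet still fail to extend past the accumulation point. I would rule this out by noting each interval $[\theta^{k},\theta^{k}+1)$ in (\ref{114}) is searched over a window of \emph{unit} length, so $\theta^{k+1}-\theta^{k}$ is bounded below away from $0$ on any interval where $\nabla v$ stays bounded away from $0$; near $g_{*}$, where $\nabla v\to 0$, the descent slows but the window length stays $1$, so again no finite accumulation occurs and the total elapsed time diverges. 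With the no-accumulation fact and the compact-invariance fact together, $\omega=\infty$ follows.
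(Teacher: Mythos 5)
Your core argument is essentially the paper's: you show $v$ is nonincreasing along the concatenated flow (using the defining property of $\theta^{k+1}$ inside each interval and the right-derivative computation $\frac{dv}{d\theta}\big|_{\theta=0}=-\textbf{I}\big(TL_{g^{k^{-1}}}\nabla v,TL_{g^{k^{-1}}}\nabla v\big)\le 0$ at each switching state), conclude that the orbit stays in $\overline{\mathcal{N}}_{b-\epsilon}\subset\mathcal{N}_{b}$, which is compact by \textbf{\textit{H1}}, and then invoke the standard escape-from-compacta criterion to get $\omega=\infty$. This is exactly the route the paper takes, and that part of your write-up is sound.

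The one place where your proposal goes beyond the paper is also where it has a genuine flaw: the Zeno discussion. You are right that the concatenated structure of (\ref{3333}) means one must rule out $\theta^{k}\uparrow\theta^{\infty}<\infty$, and the paper is silent on this. But your justification does not work: the constraint $s\in[\theta^{k},\theta^{k}+1)$ in (\ref{114}) only gives the \emph{upper} bound $\theta^{k+1}-\theta^{k}<1$; a ``unit search window'' provides no lower bound on the increments, so it cannot by itself prevent the switching times from accumulating at a finite value. A correct argument would instead use compactness: if $\theta^{k}\to\theta^{\infty}<\infty$ then $g^{k}$ converges (along a subsequence) to some $g^{\infty}\in\overline{\mathcal{N}}_{b-\epsilon}$; if $\nabla v(g^{\infty})\neq 0$, then uniform bounds on $v$ and its first two derivatives on the compact set give a lower bound on $\theta^{k+1}-\theta^{k}$ for $g^{k}$ near $g^{\infty}$ (the right derivative of $v$ starts at $-\|TL_{g^{k^{-1}}}\nabla v\|_{\textbf{I}}^{2}$, which is bounded away from $0$, and can only increase at a bounded rate), a contradiction; if $\nabla v(g^{\infty})=0$ the flow terminates at an equilibrium, which is the other alternative already allowed in Theorem \ref{t222}. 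Your statement that ``near $g_{*}$ the descent slows but the window length stays $1$, so no finite accumulation occurs'' is a non sequitur and should be replaced by an argument of this kind.
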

\begin{proof}
 By \textbf{\textit{H1}} we have $\mathcal{N}_{g_{*}}\subset \mathcal{N}_{b-\epsilon}$. Choose $0<\theta<\theta^{'}$ then if $\theta$ is not a switching time  by the construction of  $\phi$, i.e. (\ref{114}) 
 \EQ v(\varphi(\theta^{'},g^{0}))\leq v(\varphi(\theta,g^{0}))\leq v(g^{0})<b-\epsilon<b.\EN
 We need to prove the statement above when $\theta $ is a switching time. 
 The derivative from the right of the flow $\varphi$ which is the combination of the flows defined in (\ref{13}) at the switching state $g^{k}$ is given by
\EQ\label{sag1} \frac{dv(g^{k}(\theta))}{d\theta}|_{\theta=0}&\hspace{-.1cm}=&\hspace{-.1cm}\langle dv, -TL_{g^{k}}TL_{g^{k^{-1}}}\nabla v\rangle\nnum\\&\hspace{-.1cm}=&\hspace{-.1cm}-\langle T^{*}L_{g^{k}}dv, TL_{g^{k^{-1}}}\nabla v\rangle\nnum\\&\hspace{-.1cm}=&\hspace{-.1cm}-\textbf{I}(TL_{g^{k^{-1}}}\nabla v,TL_{g^{k^{-1}}}\nabla v)<0.\nnum\\\EN

It follows by the construction of $\varphi$ in \ref{3333}, for all $0<\theta<\theta^{'}$, that
\EQ v(\varphi(\theta^{'},g^{0}))\leq v(\varphi(\theta,g^{0}))\leq v(g^{0})<b-\epsilon<b,\EN
and hence for $ \Phi^{+}:=\{\varphi(\theta,g^{0}); 0\leq \theta<\omega\}$
\EQ\hspace{3cm} \overline{\Phi^{+}}\subset\overline{\mathcal{N}}_{b-\epsilon} \subset \mathcal{N}_{b}.\EN

So the flow $\varphi$ is defined everywhere in $\overline{\mathcal{N}_{b-\epsilon}}$, where $\mathcal{N}_{b}$ has compact closure. Hence for all $g\in \mathcal{N}_{b-\epsilon}$ we have an extension of $\varphi$ in $\mathcal{N}_{b}$, therefore the maximum interval of existence of $\varphi(.,g^{0})$ in $\mathcal{N}_{b}$ is infinite. 
 \end{proof}

 \begin{theorem}
   \label{t222}
 Subject to the hypothesis \textbf{\textit{H1}} on $\mathcal{N}_{b}$ and with an initial state $g^{0}$ such that $g^{0}\in \mathcal{N}_{b-\epsilon}\subset M$, $0<\epsilon<b$, either the Geodesic-Gradient flow, $\varphi$, reaches an equilibrium after a finite number of switchings, or it satisfies 
\EQ\hspace{1cm} \varphi(\theta,g^{0})\rightarrow \Omega(g^{0})\subset v^{-1}(c),\quad c\in \mathds{R},\EN
as $\theta\rightarrow \infty$, for some $c\in \mathds{R}$, where 
\EQ\hspace{2cm} \label{stai11}\forall y\in \Omega(g^{0}),\quad \frac{dv(y)}{d\theta}|_{\theta=0}=0,\EN
and, furthermore, the switching  sequence $\{g\}^{\infty}_{0}=\{g^{0},g^{1},\cdots,\}$ converges to the limit point $g_{*}\in \Omega(g^{0})\subset \mathcal{N}_{b}$, where $g_{*}$ is the unique element of $\mathcal{N}_{b}$ such that $\nabla^{\gamma}_{M}v(g_{*})=0$.
  \end{theorem}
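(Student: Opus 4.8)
The plan is to invoke the LaSalle Invariance Principle with $v$ as a Lyapunov function on the precompact region $\overline{\mathcal{N}_{b-\epsilon}}$, and then to identify the $\omega$-limit set by means of \textbf{\textit{H1}}. First I would gather what the preceding lemma provides: the total flow $\varphi(\cdot,g^{0})$ defined in (\ref{3333}) exists on all of $[0,\infty)$ (so $\omega=\infty$), and its positive orbit satisfies $\overline{\Phi^{+}}\subset\overline{\mathcal{N}_{b-\epsilon}}\subset\mathcal{N}_{b}$, which is compact by \textbf{\textit{H1}}(iv). Next, combining the switching rule (\ref{114}) with the infinitesimal identity (\ref{sag1}) --- which shows that on each exponential arc $\frac{d}{d\theta}v(g^{k}(\theta))|_{\theta=0}=-\textbf{I}(TL_{g^{k^{-1}}}\nabla v(g^{k}),\,TL_{g^{k^{-1}}}\nabla v(g^{k}))\le 0$ --- the map $\theta\mapsto v(\varphi(\theta,g^{0}))$ is nonincreasing; being bounded below ($v\ge 0$ by \textbf{\textit{A3}}), it converges to some $c\in\mathds{R}$.

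I would then split into the two alternatives of the statement. If $TL_{g^{k^{-1}}}\nabla v(g^{k})=0$ for some finite index $k=N$, the arc (\ref{13}) collapses to the constant curve, so $\varphi(\theta,g^{0})\equiv g^{N}$ for all $\theta\ge\theta^{N}$; since $v(g^{N})\le v(g^{0})<b-\epsilon$ we have $g^{N}\in\overline{\mathcal{N}_{b-\epsilon}}\subset\mathcal{N}_{b}$, whence $g^{N}=g_{*}$ by the uniqueness clause of \textbf{\textit{H1}}, and the equilibrium alternative holds. Otherwise there are infinitely many switchings; the increasing switching times $\{\theta^{k}\}$ cannot accumulate at a finite value without contradicting $\omega=\infty$, so $\theta^{k}\to\infty$, while each arc has length $\theta^{k+1}-\theta^{k}\le 1$ by (\ref{114}), and $v(g^{k})\downarrow c$ forces $v(g^{k+1})-v(g^{k})\to 0$.

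The core step is to show that every subsequential limit of the switching sequence $\{g^{k}\}\subset\overline{\mathcal{N}_{b-\epsilon}}$ is a zero of $\nabla v$. I would argue by contradiction: if $g^{k_{j}}\to g_{\infty}$ with $TL_{g_{\infty}^{-1}}\nabla v(g_{\infty})\ne 0$, then by continuity of $\nabla v$ and of the exponential flow there are a neighbourhood $W\ni g_{\infty}$ and constants $\bar\theta\in(0,1)$, $\rho>0$ with $\frac{d}{d\theta}v(g(\theta))\le-\rho$ for all $g\in W$ and all $\theta\in[0,\bar\theta]$; for $j$ large one has $g^{k_{j}}\in W$, so the rule (\ref{114}) forces $\theta^{k_{j}+1}-\theta^{k_{j}}\ge\bar\theta$, hence $v(g^{k_{j}+1})-v(g^{k_{j}})\le-\rho\bar\theta<0$, contradicting $v(g^{k+1})-v(g^{k})\to 0$. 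Thus each limit point $g_{\infty}$ is a critical point of $v$ lying in $\overline{\mathcal{N}_{b-\epsilon}}\subset\mathcal{N}_{b}$, so $g_{\infty}=g_{*}$ by \textbf{\textit{H1}}; since $\{g^{k}\}$ lies in a compact set and has $g_{*}$ as its only subsequential limit, $g^{k}\to g_{*}$. I expect this uniform decrease estimate, together with the bookkeeping caused by $\varphi$ being a concatenation of arcs rather than a classical semiflow, to be the main obstacle.

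Finally I would close the loop. By continuity $X^{k}:=TL_{g^{k^{-1}}}\nabla v(g^{k})\to TL_{g_{*}^{-1}}\nabla v(g_{*})=0$, and since left invariance of $\textbf{G}$ (Lemma \ref{l10}) makes the length of the arc (\ref{13}) issued from $g^{k}$ equal to $(\theta^{k+1}-\theta^{k})\,\|X^{k}\|_{\textbf{I}}\le\|X^{k}\|_{\textbf{I}}$, one gets $\sup_{\theta\in[\theta^{k},\theta^{k+1}]}\mathrm{dist}_{G}(\varphi(\theta,g^{0}),g^{k})\to 0$; with $g^{k}\to g_{*}$ this yields $\varphi(\theta,g^{0})\to g_{*}$ as $\theta\to\infty$. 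Consequently $\Omega(g^{0})=\{g_{*}\}\subset v^{-1}(c)$ (so $c=v(g_{*})$); the identity behind (\ref{sag1}), now at $y=g_{*}$ with $\nabla v(g_{*})=0$, gives $\frac{dv(y)}{d\theta}|_{\theta=0}=0$, which is precisely (\ref{stai11}); and since $g^{k}=\varphi(\theta^{k},g^{0})$ with $\theta^{k}\to\infty$, the switching sequence $\{g^{k}\}^{\infty}_{0}$ converges to $g_{*}$, the unique zero of $\nabla v$ in $\mathcal{N}_{b}$, completing the argument.
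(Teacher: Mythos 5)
Your proof is correct, but it takes a genuinely different route from the paper's in the decisive step. The paper runs the classical LaSalle scheme: it first shows $v$ is constant on the limit set $\Omega(g^{0})$ (the argument around (\ref{4})), then invokes positive invariance of $\Omega(g^{0})$ under $\varphi$, i.e.\ (\ref{15}), to obtain the stationarity (\ref{yy}) of every limit point, and only then extracts a convergent subsequence of the switching sequence, using the explicit one-sided derivative formula (\ref{kos2}) to force $\nabla v(g^{*})=0$ and hence $g^{*}=g_{*}$ by \textbf{\textit{H1}}. You bypass the limit-set machinery entirely: monotone convergence $v(g^{k})\downarrow c$ combined with your sufficient-decrease estimate --- a uniform per-arc drop $\rho\bar{\theta}$ near any non-critical accumulation point of $\{g^{k}\}$, obtained from joint continuity of $(g,\theta)\mapsto \frac{d}{d\theta}v(\gamma_{g}(\theta))$ together with the switching rule (\ref{114}) --- shows directly that every accumulation point of the switching sequence is critical and hence equals $g_{*}$; the arc-length bound $(\theta^{k+1}-\theta^{k})\,\textbf{I}(X^{k},X^{k})^{1/2}\rightarrow 0$, with $X^{k}=TL_{g^{k^{-1}}}\nabla v(g^{k})$, then upgrades this to convergence of the entire flow, giving $\Omega(g^{0})=\{g_{*}\}$, which is strictly stronger than what the paper proves and renders (\ref{stai11}) immediate. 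Your route buys something real: the paper justifies the invariance (\ref{15}) only by ``continuity of $\varphi$'', yet $\varphi$ in (\ref{3333}) is a concatenation whose restarts depend on the gradient at the previous switching point, so it is not a semiflow and positive invariance of $\Omega(g^{0})$ is not automatic; your argument never needs it. The one point you share with the paper and should make explicit is that the switching times satisfy $\theta^{k}\rightarrow\infty$ (no Zeno accumulation); your appeal to $\omega=\infty$ is the right instinct, but it deserves a sentence tying it to the fact that the concatenation (\ref{3333}) only defines $\varphi$ on $[0,\sup_{k}\theta^{k})$.
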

  
\begin{proof}
The first statement of the theorem is immediate by the Definition \ref{d4}.
To prove the second statement, similar to the proof of the LaSalle Theorem, we proceed by showing that $v(.)$ is constant on the set $\Omega(x^{0})$.
The precompactness of $\Phi^{+}$ ( that is to say (i): $\overline{\Phi^{+}}\subset \overline{\mathcal{N}}_{b}$, (ii): there does not exist $\theta_{i}\rightarrow \omega, i\rightarrow \infty$, such that $\varphi(\theta_{i},g^{0})\rightarrow \partial \mathcal{N}_{b}$, i.e. $\overline{\Phi^{+}}\bigcap \partial \mathcal{N}_{b}=\oslash$), implies $\Omega(g^{0})\neq \oslash$, see \cite{peter}. By the definition of $\Omega(g^{0})$ we have
\EQ \forall &&y\in \Omega(g^{0})\Rightarrow \exists \theta_{n}, n\geq1, \quad s.t.,\nnum\\&& \varphi(\theta_{n},g^{0})\rightarrow y, \theta_{n}\rightarrow \infty,\EN
and since $v(.)\in C^{1}$, 
\EQ \lim_{n\rightarrow \infty}v(g(\theta_{n}))=\lim_{n\rightarrow \infty}v(\varphi(\theta_{n},g^{0}))=v(y)=:c.\nnum\\\EN
Now choose $y^{'}\in\Omega(g^{0}), y^{'}\neq y, $ then by the existence of a convergent sequence $g(\theta^{'}_{n})$ to $y^{'}$ we have
\EQ\label{4} &&\forall\epsilon>0 \Rightarrow  \exists n,n_{i},k\quad s.t. \quad \theta_n<\theta^{'}_{n_{i}}<\theta_{n+k}\nnum\\&& c-\epsilon<v(g(\theta_{n+k}))\leq v(g(\theta^{'}_{n_{i}})) \leq v(g(\theta_{n}))<c+\epsilon,\nnum\\\EN
i.e. $v(y^{'})=c$, hence $\Omega(x^{0})\subset v^{-1}(c).$
To prove stationarity, i.e. (\ref{stai11}), we observe that $\Omega(x^{0})$ is positive invariant under the flow $\varphi$, i.e. 
\EQ\hspace{2cm} \label{15}\varphi(\theta,\Omega(g^{0}))\subset \Omega(g^{0}),\quad \theta>0.\EN
This follows from the continuity of $\varphi(.,.)$, see \cite{peter}. Differentiability from the right for all $g\in \varphi(\theta,g^{0}), 0<\theta$, implies  
\EQ\label{yy} \frac{dv}{d\theta}|_{\theta=0}&=&\lim_{\theta\rightarrow 0^{+}}\frac{v(\varphi(\theta,y))-v(\varphi(0,y))}{\theta}\nnum\\&=&\lim_{\theta\rightarrow 0^{+}}\frac{c-c}{\theta}=0,\quad y\in \Omega(g^{0}),\nnum\\\EN
since $\varphi(\theta,y)\in \Omega(g^{0})$ by (\ref{15}) and $v(\Omega(g^{0}))=c$ by (\ref{4}). 

It remains to prove the statement for the sequence of the switching states $\{g\}^{\infty}_{0}=\{g^{0},g^{1},\cdots\}$. 
The switching sequence $\{g\}^{\infty}_{0}$ consists of the switching points on $\varphi(\theta,g^{0})$ which by  (\ref{13}) is an infinite sequence. 

The precompactness of $\Phi^{+}$ with respect to $\mathcal{N}_{b}$ implies the existence of a convergent subsequence of $\{g\}^{\infty}_{0}$ such that
\EQ \label{kir11}\lim_{i\rightarrow \infty}\varphi(\theta^{n}_{i},g^{0})= g^{*}\in \Omega(g^{0}),\Omega(g^{0})\subset  \overline{\Phi^{+}}\subset\overline{\mathcal{N}_{b-\epsilon}}.\nnum\\\EN
Since $v\in C^{\infty}(\mathcal{N}_{b})$ 
\EQ \hspace{1.5cm}\label{kir22}\lim_{i\rightarrow \infty}\nabla v(\varphi(\theta^{n}_{i},g^{0}))=\nabla v(g^{*}),\EN
and
\EQ\label{kos} \lim_{i\rightarrow \infty}\frac{dv(\varphi(\theta^{n}_{i},g^{0}))}{d\theta}|_{\theta=0}=\frac{dv(g^{*})}{d\theta}|_{\theta=0}.\EN
But since the state $\varphi(\theta^{n}_{i},g^{0}))$ is a switching state chosen from the switching sequence $\{g\}^{\infty}_{0}$,  
\EQ \label{kos2}&&\frac{dv(\varphi(\theta^{n}_{i},g^{0}))}{d\theta}|_{\theta=0}=\nnum\\&&-\textbf{I}(TL_{\varphi(\theta^{n}_{i},g^{0})^{-1}}\nabla v,TL_{\varphi(\theta^{n}_{i},g^{0})^{-1}}\nabla v),\EN
As is stated in (\ref{kir11}), the limit point $g^{*}$ is an element of the limit set $\Omega(g^{0})$, therefore by (\ref{yy}) we have 
\EQ\hspace{3cm}\frac{dv(g^{*})}{d\theta}|_{\theta=0}=0. \EN

From (\ref{kir22})-(\ref{kos2}) we have
\EQ 0&=&\frac{dv(x^{*})}{d\theta}|_{\theta=0}=\lim_{i\rightarrow \infty}\frac{dv(\varphi(\theta^{n}_{i},x^{0}))}{d\theta}|_{\theta=0}\nnum\\&=&\lim_{i\rightarrow \infty}\big(-\textbf{I}(TL_{\varphi(\theta^{n}_{i},g^{0})^{-1}}\nabla v,TL_{\varphi(\theta^{n}_{i},g^{0})^{-1}}\nabla v)\big).\nnum\\\EN
Hence
\EQ \nabla v(g^{*})=0,\EN
or equivalently
\EQ dv|_{g^{*}}=0. \EN
But by \textit{\textbf{H1}}$, g_{*}$ is the unique point in $\mathcal{N}_{b-\epsilon}\subset \mathcal{N}_{b}$ for which this holds, hence all subsequences of $\{g\}^{\infty}_{0}$ converge to $g_{*}=g^{*}$ an hence so does the sequence.
\end{proof}

\begin{definition}\label{d5} (\textit{EG-HMP Algorithm})\\
 Consider the hybrid system (\ref{1}) with two distinct discrete states and the performance function $v(.)$. 
 \begin{itemize}
 \item Set $k=0$ and initialize the algorithm with $g^{k}=g(0)\in G$.
 
\item  For a given $0<\beta$, compute $\nabla v(g^{k})$. If $\textbf{I}(TL_{g^{k^{-1}}}\nabla v,TL_{g^{k^{-1}}}\nabla v)< \beta$,  then stop, else 
\EQ&&\gamma_{g^{k}}(\theta)=g(\theta)=\hspace{0cm}g^{k}\star \exp(-\theta TL_{g^{k^{-1}}}\nabla v(g^{k})), \nnum\\&& \theta\in[\theta^{k},\theta^{k+1}),\quad g(\theta)\in G,\nnum\EN
  where
  \EQ &&g^{k+1}=g(\theta^{k+1}),\hspace{.2cm}\nnum\\&&\theta^{k+1}=\sup\limits_{s}\{s; \frac{dv(g(t))}{dt}\leq 0, t\in [\theta^{k},s), \nnum\\&& s\in[\theta^{k},\theta^{k}+1)\}.\nnum\EN

\item     Set $k:=k+1$ and go to step two.
\end{itemize}\halmos
\end{definition}

\begin{theorem}
\label{t3}
Assume \textit{\textbf{H1}} holds for $\mathcal{N}_{b}\subset G$ and $g^{0}\in \mathcal{N}_{b}$, for the HOCP with the performance function $v(.)$. Then there exists  a single finite k at which the algorithm stops and 
either:\\

(i): $0<\textbf{I}(TL_{g^{k^{-1}}}\nabla v,TL_{g^{k^{-1}}}\nabla v)< \beta$,\\

or\\

(ii): $\textbf{I}(TL_{g^{k^{-1}}}\nabla v,TL_{g^{k^{-1}}}\nabla v)=0,$ in which case the Geodesic-Gradient flow, $\varphi$,
reaches an equilibrium after a finite number of switchings and hence $\nabla v(g^{k(\beta)}(\beta))=0$ and  $g^{k(\beta)}(\beta)=g_{*}$, where $g_{*}$ is the unique point of $\mathcal{N}_{b}\subset G$ such that $||\nabla v(g_{*})||=0$.\\

 In either case, $g^{k(\beta)}(\beta)$ is such that 
\EQ \hspace{.6cm}g^{k(\beta)}(\beta)\rightarrow g_{*},\quad  k(\beta)\rightarrow\infty,\quad as \quad  \beta\rightarrow 0.\EN
\end{theorem}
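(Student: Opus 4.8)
The plan is to derive Theorem \ref{t3} from the LaSalle-type convergence result of Theorem \ref{t222} by applying it to the stopping test of the EG-HMP Algorithm (Definition \ref{d5}). Note first that, run with parameter $\beta>0$, the algorithm generates exactly the switching sequence $\{g^{k}\}$ of the Exp-Gradient flow $\varphi$ of Definition \ref{d4}, with the sole addition that after each switching it evaluates the quantity $s(g^{k}):=\textbf{I}\big(TL_{g^{k^{-1}}}\nabla v(g^{k}),\,TL_{g^{k^{-1}}}\nabla v(g^{k})\big)$ and halts as soon as $s(g^{k})<\beta$. By (\ref{sag1}) one has $s(g^{k})=-\,dv(g^{k}(\theta))/d\theta|_{\theta=0}$, so $s(g^{k})$ is precisely the rate of decrease of $v$ along the flow starting at $g^{k}$; in particular, by Definition \ref{d4}, ``$\varphi$ reaches an equilibrium after finitely many switchings'' is equivalent to ``$s(g^{k})=0$ for some finite $k$''.

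Next, I would fix $\beta>0$ and invoke the dichotomy of Theorem \ref{t222}. In the first alternative some $s(g^{k})$ vanishes, whence $s(g^{k})<\beta$ and the test fires at a finite index; by positive-definiteness of $\textbf{I}$ this forces $\nabla v(g^{k})=0$, and by the uniqueness of the stationary point in $\mathcal{N}_{b}$ asserted in Theorem \ref{t222} (cf. \textbf{\textit{H1}}) that state equals $g_{*}$, giving alternative (ii). In the second alternative the switching sequence $\{g^{k}\}$ converges to $g_{*}$ with $\nabla v(g_{*})=0$; since $v\in C^{\infty}(\mathcal{N}_{b})$ and $g\mapsto TL_{g^{-1}}$ is smooth with $TL_{g^{-1}}$ a linear isomorphism, the map $g\mapsto s(g)$ is continuous and $s(g^{k})\to s(g_{*})=0$. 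Hence there is a least index $k(\beta)$ with $s(g^{k(\beta)})<\beta$, and the algorithm stops there: this yields alternative (i) when $s(g^{k(\beta)})>0$, and, by the same positive-definiteness and uniqueness argument, alternative (ii) with $g^{k(\beta)}=g_{*}$ when $s(g^{k(\beta)})=0$. In every case $k(\beta)$ is finite, which is the first claim of the theorem.

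For the limit $\beta\to0$, I would use that $\beta\mapsto k(\beta)$ is non-increasing, since the set $\{k:s(g^{k})<\beta\}$ shrinks as $\beta$ decreases, so the first index in it can only grow. If along some sequence $\beta_{n}\downarrow0$ the indices $k(\beta_{n})$ stayed bounded, being a bounded monotone integer sequence they would be eventually constant, say $\equiv k_{0}$, forcing $s(g^{k_{0}})=0$ and hence $g^{k_{0}}=g_{*}$ by positive-definiteness and uniqueness; then $g^{k(\beta_{n})}(\beta_{n})=g_{*}\to g_{*}$ trivially. Otherwise $k(\beta)\to\infty$ as $\beta\to0$, in which case $\{g^{k(\beta)}\}$ is a subsequence of the convergent sequence $\{g^{k}\}$ and therefore $g^{k(\beta)}(\beta)\to g_{*}$ as well. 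This establishes the last assertion.

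The substantive content is already contained in Theorem \ref{t222}; the delicate point here is the bridge from its continuous-time conclusion to the discrete stopping rule --- specifically, excluding the possibility that the test $s(g^{k})<\beta$ is never satisfied, which is precisely where the convergence of the \emph{switching sequence} (and the vanishing of $\nabla v$ at its limit) is essential rather than mere precompactness of the flow. The remaining steps --- continuity of $s(\cdot)$, positive-definiteness of $\textbf{I}$, and monotonicity of $k(\beta)$ --- are routine.
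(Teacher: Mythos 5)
Your proposal is correct, and it actually supplies more detail than the paper does for the first claim: the paper dismisses the finiteness of the stopping index as ``immediate by Definition \ref{d5}'', whereas you correctly identify that it is not immediate --- one must rule out the test $s(g^{k})<\beta$ never firing, and this requires the convergence of the switching sequence $\{g^{k}\}$ to $g_{*}$ (with $\nabla v(g_{*})=0$) from Theorem \ref{t222}, plus continuity of $g\mapsto\textbf{I}(TL_{g^{-1}}\nabla v,TL_{g^{-1}}\nabla v)$ and positive-definiteness of $\textbf{I}$. Where you genuinely diverge from the paper is the limit $\beta\rightarrow 0$. The paper introduces the quantity $\rho_{\beta}(g_{*})=\sup\{d_{G}(g,g_{*}):\textbf{I}(TL_{g^{-1}}\nabla v,TL_{g^{-1}}\nabla v)<\beta\}$ and argues that $\rho_{\beta}(g_{*})\rightarrow 0$ as $\beta\rightarrow 0$, so that \emph{any} point passing the stopping test must be close to $g_{*}$; this is a static ``shrinking sublevel set of the gradient norm'' argument resting on compactness of $\overline{\mathcal{N}}_{b}$ and uniqueness of the critical point (and, as written with the supremum over all of $G$ rather than $\overline{\mathcal{N}}_{b}$, it arguably needs the restriction you implicitly impose). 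You instead use the monotonicity of $\beta\mapsto k(\beta)$ together with the fact that $\{g^{k(\beta)}\}$ is extracted from the already-convergent switching sequence, so convergence to $g_{*}$ is inherited directly from Theorem \ref{t222}. Your route avoids the unproved auxiliary claim $\rho_{\beta}(g_{*})\rightarrow 0$ and is entirely self-contained given Theorem \ref{t222}; the paper's route, when the sublevel sets are restricted to the compact closure of $\mathcal{N}_{b}$, gives the slightly stronger statement that the terminal accuracy is controlled uniformly by $\beta$ independently of the particular trajectory. Both reach the same conclusion; your bookkeeping of the degenerate case where $k(\beta)$ stabilizes at a finite value (forcing $s(g^{k_{0}})=0$ and hence $g^{k_{0}}=g_{*}$ by the uniqueness asserted in Theorem \ref{t222}) is a detail the paper omits.
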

\begin{proof}
The first statement is immediate by the Definition \ref{d5}.
The second holds since $v(.)$ has a unique local minimum at $g_{*}$, and $v(.)\in C^{1}(\mathcal{N}_{b})$ with $\nabla v(g_{*})=0$, 
\EQ &&\rho_{\beta}(g_{*}):=\nnum\\&&\hspace{0cm}\sup\big\{d_{G}(g,g_{*});\textbf{I}(TL_{g^{-1}}\nabla v,TL_{g^{-1}}\nabla v)< \beta, g\in G\big\},\nnum\\\EN
where $d_{G}(.,.)$ is the distance on $G$. Moreover $\rho$ is such that $\rho_{\beta}(g_{*})\rightarrow 0$ as $\beta\rightarrow 0$, hence $g^{k(\beta)}(\beta)\rightarrow g_{*},\quad as \quad \beta\rightarrow 0$, see \cite{pet}.
\end{proof}

\section{Satellite Example}
In this section we give a conceptual example  for a satellite orientation control which is modeled by elements of $SO(3)$. The control inputs in this model are given by the angular velocities in order to display the notion of left invariant hybrid systems optimal control. 

We recall that $SO(3)$ is the rotation group in $\mathds{R}^{3}$ given by 
\EQ SO(3)= \big \{g\in GL(3)| \quad g.g^{T}=I,\hspace{.1cm} det(g)=1\big\},\EN 
where $GL(n)$ is the set of nonsingular $n\times n$ matrices.
The Lie algebra of $SO(3)$ which is denoted by $so(3)$ is given by (see \cite{Varad})
\EQ so(3)=\big\{     X\in M(3)|\quad X+X^{T}=0\big\},\EN
where $M(n)$ is the space of all $n\times n$ matrices. The Lie group operation $\star$ is given by the matrix multiplication and consequently $TL_{g_{2}}$ is also given by the matrix multiplication $g_{2}X,\hspace{.2cm} X\in T_{g_{1}}G$.

A left invariant dynamical system on $SO(3)$ is given by 
\EQ \dot{g}(t)=gX,\quad g(0)=g_{0},\hspace{.2cm} X\in so(3).\EN
The Lie algebra bilinear operator is defined as the commuter of matrices, i.e. 
\EQ [X,Y]=XY-YX,\quad X,Y\in so(3).\EN 
The kinematic equations expressing the state trajectory $g(.)$ for a satellite is given by
\EQ\label{po} &&\left\{\begin{array}{ll}\dot{g}(t)=g(t)X(t)\\\dot{\hat{X}}(t)+\mathds{I}^{-1}(\hat{X}(t)\times\mathds{I}\hat{X}(t))=\mathds{I}^{-1}\tau(t)\end{array},\right.\nnum\\&& g(t)\in SO(3), X(t)\in so(3).\EN
The first line of the equation above is written as
\EQ &&\left( \begin{array}{ll} \dot{g_{11}} \quad \dot{g_{12}} \quad \dot{g_{13}}\\\dot{g_{21}}\quad \dot{g_{22}}\quad \dot{g_{23}}\\\dot{g_{31}}\quad\dot{g_{32}}\quad \dot{g_{33}}
        \end{array}\right)=\nnum\\&&\left( \begin{array}{ll} g_{11} \quad g_{12} \quad g_{13}\\g_{21}\quad g_{22}\quad g_{23}\\g_{31}\quad g_{32}\quad g_{33}
        \end{array}\right)\left( \begin{array}{ll} 0 \quad \hspace{.5cm}X_{1}(t) \quad X_{3}(t)\\-X_{1}(t)\quad 0\quad\hspace{.3cm} X_{2}(t)\\-X_{3}(t)\quad \hspace{-.2cm}-X_{2}(t)\quad 0
        \end{array}\right),\nnum\\\EN
and  $\hat{.}:so(3)\rightarrow \mathds{R}^{3}$ is an isomorphism such that

\EQ \widehat{\left( \begin{array}{ll} 0 \quad \hspace{.5cm}X_{1}(t) \quad X_{3}(t)\\-X_{1}(t)\quad 0\quad\hspace{.3cm} X_{2}(t)\\-X_{3}(t)\quad \hspace{-.2cm}-X_{2}(t)\quad 0
        \end{array}\right)}=(X_{1}(t),X_{2}(t),X_{3}(t)).\EN
 $\mathds{I}$ is the inertia tensor given by $\left[ \begin{array}{ll} \mathds{I}_{1} \quad 0 \quad 0\\0\quad \mathds{I}_{2}\quad 0\\0\quad 0\quad \mathds{I}_{3}
        \end{array}\right]$, the input torque is $t\rightarrow\tau(t)\in \mathds{R}^{3}$ and $\times $ is the cross product in $\mathds{R}^{3}$. For more details of the modeling above see \cite{Lewis}, page 281. The second part of (\ref{po}) is the \textit{controlled Euler-Poincare equation} and (\ref{po}) is the geodesic equation on $G$ in the presence of external forces, see \cite{Lewis, Arnold}.

We simplify the satellite model above in order to be able to give an explicit
computational example
of Theorem \ref{t22}. In this example we consider $X\in so(3)$ as the control, i.e. $\hat{X}(t)=(u_{1}(t),u_{2}(t),u_{3}(t))\in \mathds{R}^{3}$. A controlled left invariant system on $SO(3)$ is then defined by
\EQ &&\hspace{-.2cm}\begin{array}{ll}\dot{g}(t)=g(t)\left( \begin{array}{ll} 0 \quad \hspace{.5cm}u_{1}(t) \quad u_{3}(t)\\-u_{1}(t)\quad 0\quad\hspace{.3cm} u_{2}(t)\\-u_{3}(t)\quad \hspace{-.2cm}-u_{2}(t)\quad 0
        \end{array}\right),\end{array} \nnum\\&&g(t)\in SO(3), (u_{1},u_{2},u_{3})\in \mathds{R}^{3}.\EN
        The Lie algebra $so(3)$ is spanned by $e_{1}=\left( \begin{array}{ll} 0 \quad\hspace{.3cm} 1 \quad 0\\-1\quad 0\quad 0\\0\quad\hspace{.3cm} 0\quad 0
        \end{array}\right), e_{2}=\left( \begin{array}{ll} 0 \quad\hspace{.3cm} 0 \quad 0\\0\quad\hspace{.3cm} 0\quad 1\\0\quad -1\quad 0
        \end{array}\right) \mbox{and}\hspace{.2cm}\nnum\\ e_{3}=\left( \begin{array}{ll} 0 \quad\hspace{.3cm} 0 \quad 1\\0\quad\hspace{.3cm} 0\quad 0\\-1\quad 0\quad 0
        \end{array}\right)$.
        One can check that
        \EQ \label{kir}[e_{1},e_{2}]=e_{3},\hspace{.1cm}[e_{1},e_{3}]=-e_{2},\hspace{.1cm}[e_{2},e_{3}]=e_{1}.\EN
By the controllability results presented in \cite{Jurd} since all the Lie algebras generated by $(e_{1},e_{2}),(e_{2},e_{3}),(e_{1},e_{3})$ span the tangent space of the Lie group all the systems derived by each pair of controls are controllable. Here we define a controlled hybrid system (no switching manifolds) on $SO(3)$ as follows:
The continuous dynamics are given by
\EQ &&\hspace{-.2cm}\begin{array}{ll}\dot{g}_{1}(t)=g_{1}(t)\left( \begin{array}{ll} 0 \quad \hspace{.5cm}u_{1}(t) \quad 0\\\hspace{-.2cm}-u_{1}(t)\quad 0\quad\hspace{.3cm} u_{2}(t)\\0\quad\hspace{.2cm} -u_{2}(t)\quad 0
        \end{array}\right),t\in[t_{0},t_{s})\\\dot{g}_{2}(t)=g_{2}(t)\left( \begin{array}{ll} 0 \quad \hspace{.5cm}u_{1}(t) \quad u_{3}(t)\\-u_{1}(t)\quad 0\quad\hspace{.3cm} 0\\-u_{3}(t)\quad 0\quad\hspace{.3cm} 0
        \end{array}\right),t\in[t_{s},t_{f}]\\g_{1}(t),g_{2}(t)\in SO(3), (u_{1},u_{2},u_{3})\in \mathds{R}^{3},\end{array}\EN
        where
        \EQ J_{1}=\frac{1}{2}\int^{t_{s}}_{t_{0}}u^{2}_{1}(t)+u^{2}_{2}(t)dt, J_{2}=\frac{1}{2}\int^{t_{s}}_{t_{0}}u^{2}_{1}(t)+u^{2}_{3}(t)dt.\EN
       
       \begin{figure}
\begin{center}
\hspace*{-.5cm}\includegraphics[scale=.45]{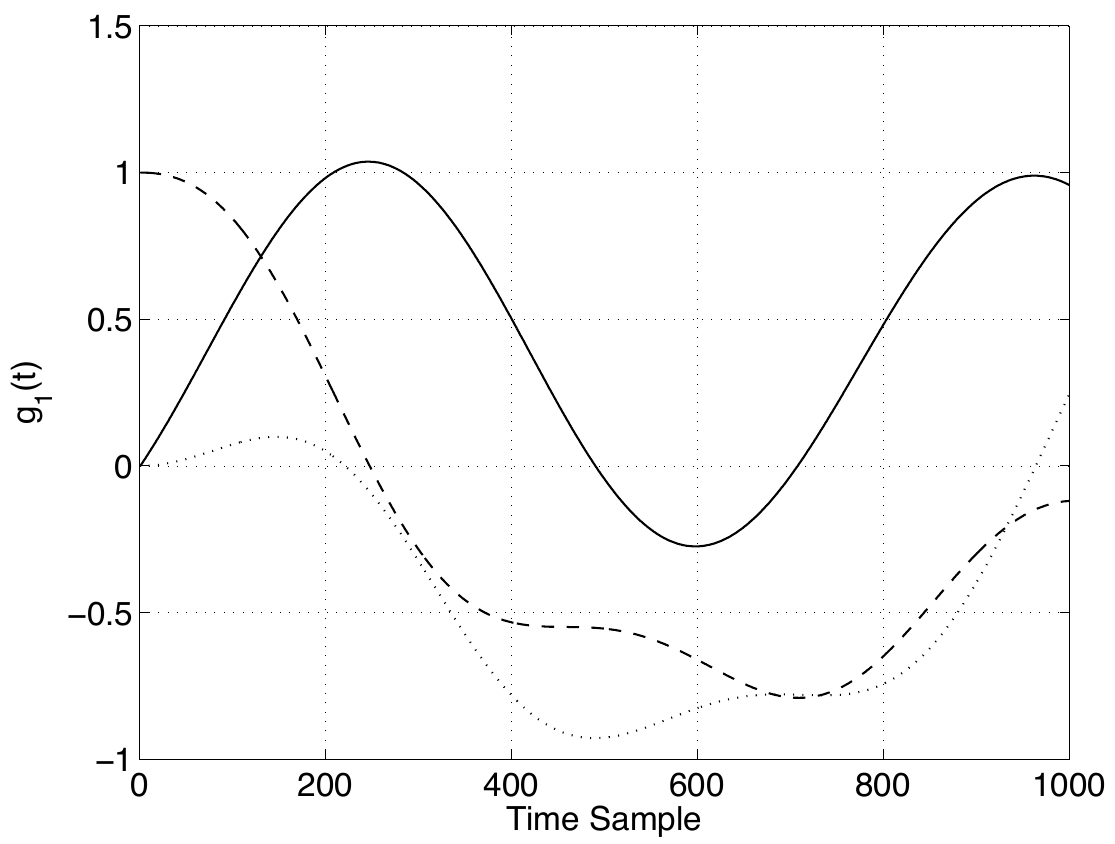}
      \caption{ Hybrid State Trajectory (discrete state  1)}
      \label{f1}
      \end{center}
   \end{figure}

\begin{figure}
\begin{center}
\hspace*{-.5cm}\includegraphics[scale=.45]{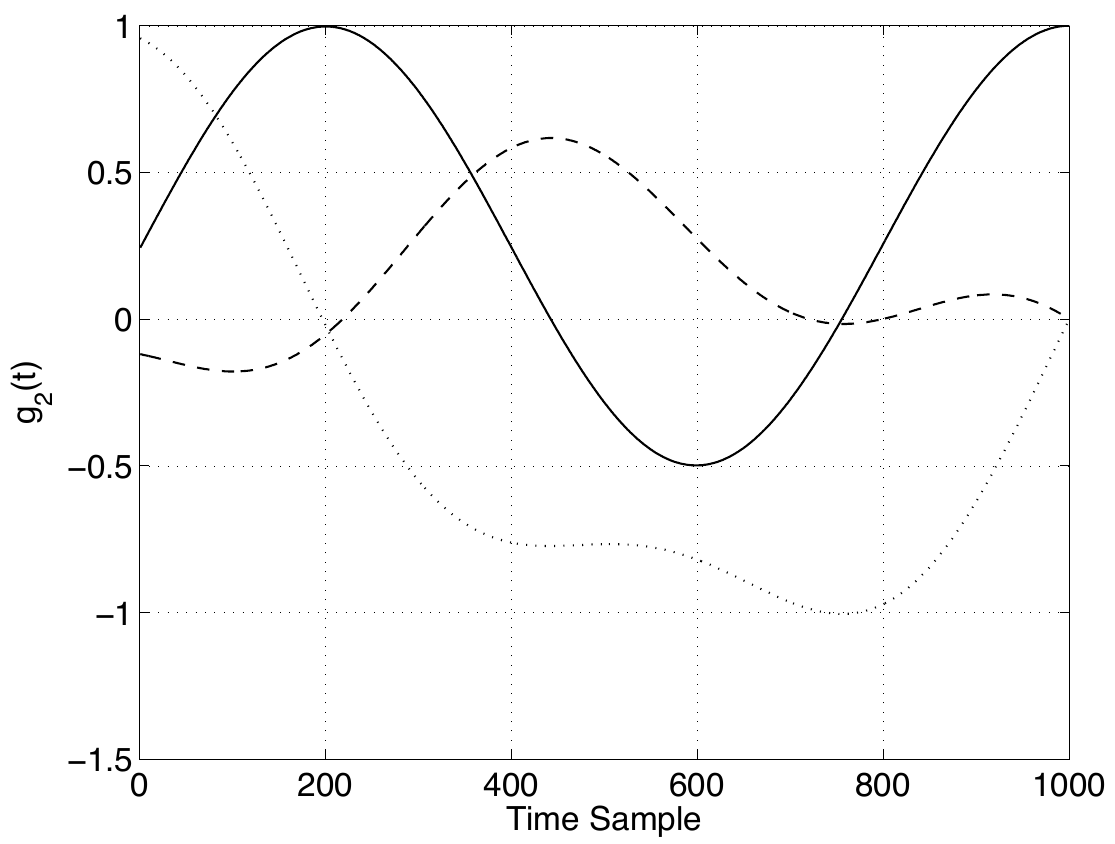}
      \caption{ Hybrid State Trajectory (discrete state 2)}
      \label{f2}
      \end{center}
   \end{figure}
 \begin{figure}
\begin{center}
\hspace*{-1.3cm}\includegraphics[scale=.45]{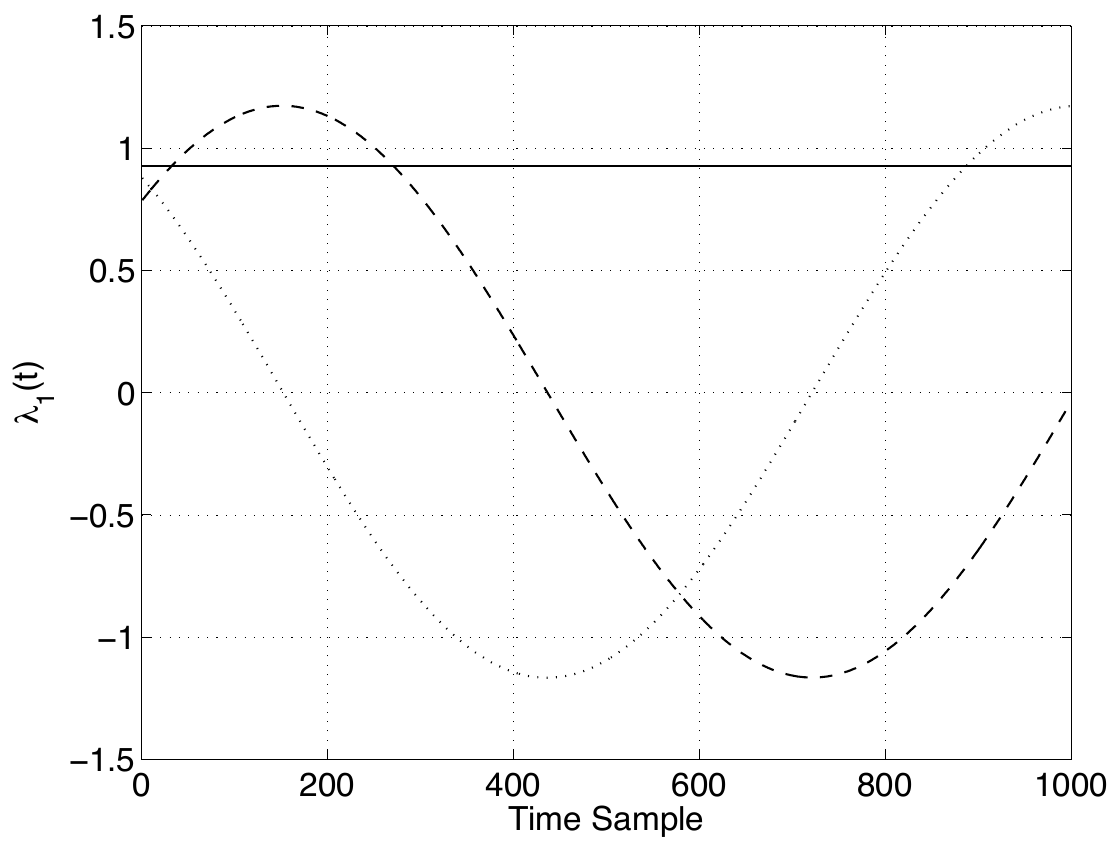}
      \caption{ Hybrid Adjoint Trajectory (discrete state 1)}
      \label{f3}
      \end{center}
   \end{figure}

 \begin{figure}
\begin{center}
\hspace*{-1.3cm}\includegraphics[scale=.45]{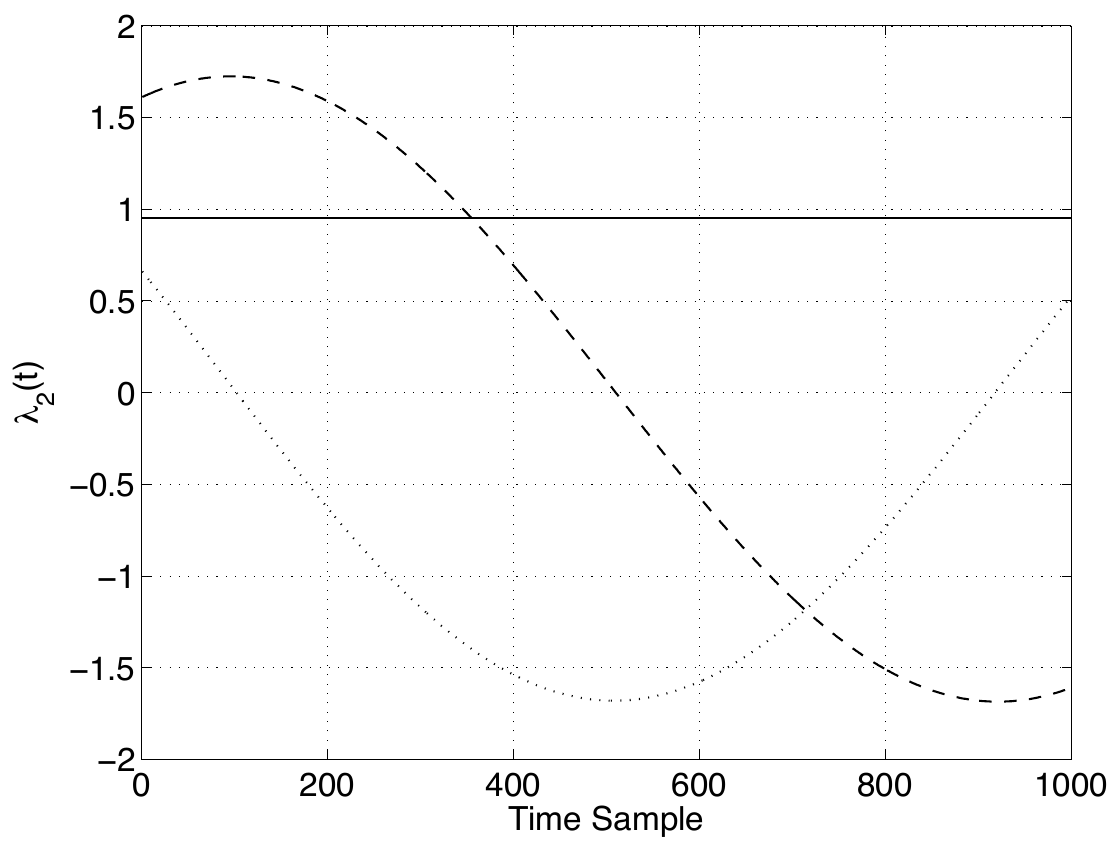}
      \caption{ Hybrid Adjoint Trajectory (discrete state 2)}
      \label{f4}
      \end{center}
   \end{figure}
\begin{figure}
\begin{center}
\hspace*{-1.3cm}\includegraphics[scale=.45]{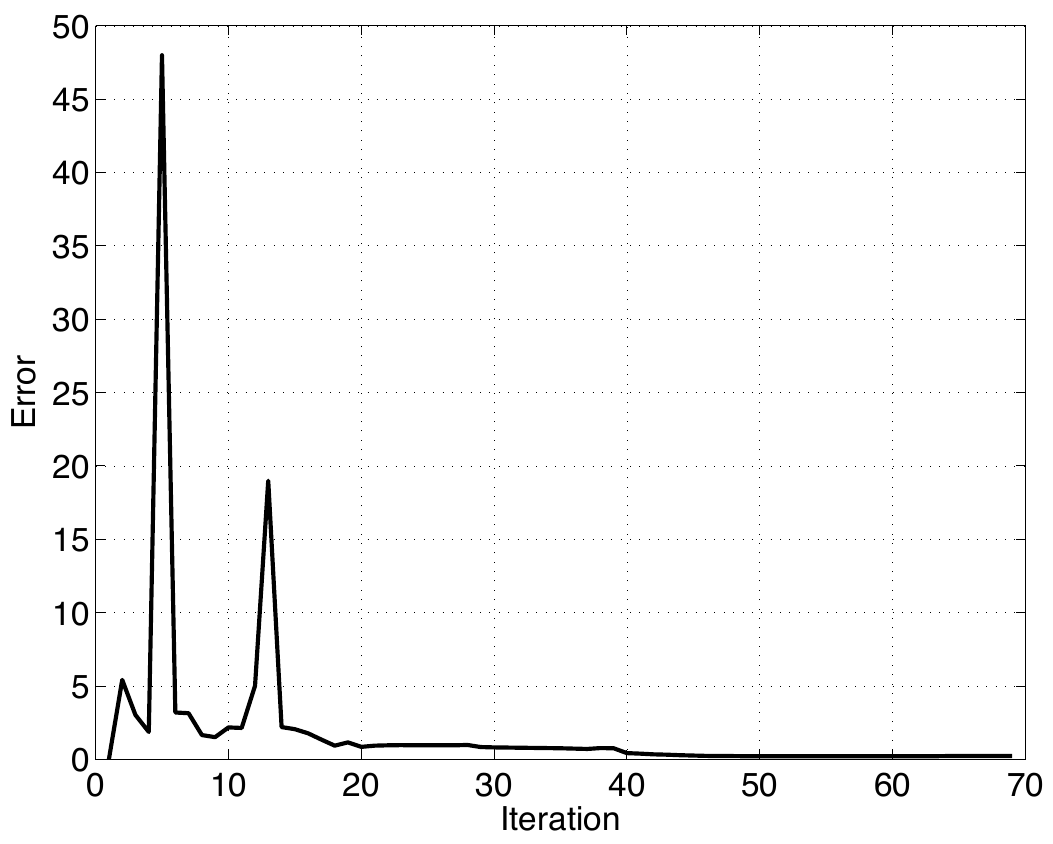}
      \caption{ EX-HMP Convergence}
      \label{f5}
      \end{center}
   \end{figure}
        The Hamiltonians corresponding to the left invariant dynamics are 
        \EQ H_{1}(\lambda,u_{1},u_{2})=\langle \lambda, u_{1}e_{1}+u_{2}e_{2}\rangle+\frac{1}{2}(u^{2}_{1}+u^{2}_{2}),\EN
         \EQ H_{2}(\lambda,u_{1},u_{3})=\langle \lambda, u_{1}e_{1}+u_{3}e_{3}\rangle+\frac{1}{2}(u^{2}_{1}+u^{2}_{3}),\EN
         where $\lambda=\lambda_{1}e^{*}_{1}+\lambda_{2}e^{*}_{2}+\lambda_{3}e^{*}_{3}$ and $\langle e^{*}_{i},e_{j}\rangle=\delta_{i,j},\hspace{.2cm}i,j=1,2,3$. By the Minimum Principle, the optimal controls are obtained as
         \EQ u^{*}_{1}(t)=-\lambda_{1}(t), u^{*}_{2}(t)=-\lambda_{2}(t), t\in[t_{0},t_{s}),  \EN
         \EQ u^{*}_{1}(t)=-\lambda_{1}(t), u^{*}_{3}(t)=-\lambda_{3}(t), t\in[t_{s},t_{f}].  \EN
         By (\ref{kir}) we have
         \EQ\label{a1} ad_{e_{1}}=\left( \begin{array}{ll} 0 \quad 0 \quad 0\\0\quad0\quad -1\\0\quad1\quad  \hspace{.3cm}0
        \end{array}\right),\EN
        \EQ \label{a2}ad_{e_{2}}=\left( \begin{array}{ll} 0 \quad 0 \quad 1\\0\quad0\quad 0\\\hspace{-.25cm}-1\quad0\quad 0
        \end{array}\right),\EN
 \EQ \label{a3}ad_{e_{3}}=\left( \begin{array}{ll} 0 \quad \hspace{-.3cm}-1 \quad 0\\1\quad0\quad 0\\0\quad0\quad 0
        \end{array}\right).\EN
And
\EQ \frac{\partial H_{1}}{\partial \lambda}=u_{1}e_{1}+u_{2}e_{2}, \frac{\partial H_{2}}{\partial \lambda}=u_{1}e_{1}+u_{3}e_{3},\EN
therefore
\EQ ad_{\frac{\partial H_{1}}{\partial \lambda}}=\left( \begin{array}{ll} 0 \quad 0 \quad u_{2}\\0\quad0\quad \hspace{-.3cm}-u_{1}\\\hspace{-.3cm}-u_{2}\quad u_{1}\quad 0
        \end{array}\right),ad_{\frac{\partial H_{2}}{\partial \lambda}}=\left( \begin{array}{ll} 0 \quad \hspace{-.3cm}-u_{3} \quad 0\\u_{3}\quad0\quad \hspace{-.3cm}-u_{1}\\0\quad u_{1}\quad 0
        \end{array}\right).\EN
        Hence the differential equations corresponding to the adjoint variable $\lambda$ are given by
        \EQ&& \begin{array}{ll}\dot{\lambda_{1}}(t)=\lambda_{3}(t)u^{*}_{2}(t),\\\dot{\lambda_{2}}(t)=-\lambda_{3}(t)u^{*}_{1}(t),\\\dot{\lambda_{3}}(t)=-\lambda_{1}(t)u^{*}_{2}(t)+\lambda_{2}(t)u^{*}_{1}(t),\quad t\in [t_{0},t_{s}),\end{array}\EN
\EQ&&\hspace*{-2.25cm}\begin{array}{ll} \dot{\lambda_{1}}(t)=-\lambda_{2}(t)u^{*}_{3}(t),\\\dot{\lambda_{2}}(t)=\lambda_{1}(t)u^{*}_{3}(t)-\lambda_{3}(t)u^{*}_{1}(t),\\\dot{\lambda_{3}}(t)=\lambda_{2}(t)u^{*}_{1}(t),\quad \hspace{2cm}t\in [t_{s},t_{f}].\end{array}\EN

 \begin{definition}\label{d7}
 For a finite dimensional Lie algebra $so(3)$, we define the Killing Form  $B(.,.):so(3)\times so(3)\rightarrow \mathds{R}$ as
 \EQ B(X,Y)=tr(ad_{X}ad_{Y}),\hspace{.2cm}X,Y\in so(3).\EN 
 \end{definition}
  The Killing Form is invariant in the sense that
   \EQ B([X,Y],Z)=B(X,[Y,Z]).\EN
Now corresponding to $B$ we introduce an inner product $I_{B}$ on $so(3)$ such that
\EQ I_{B}(X,Y)=-tr(ad_{X}ad_{Y}).\EN 
 Lemma \ref{l10} implies that $B$ induces a left invariant metric on $G$. By (\ref{a1})-(\ref{a3}) we have
 \EQ I_{B}=\left( \begin{array}{ll} 2 \quad 0 \quad 0\\0\quad2\quad 0\\0\quad0\quad  2
        \end{array}\right)\EN
        By the realization above, $T^{*}L_{g}dv=\lambda_{1}e^{*}_{1}+\lambda_{2}e^{*}_{2}+\lambda_{3}e^{*}_{3}\in so^{*}(3)$ implies that
        \EQ TL_{g^{-1}}\nabla v=\frac{\lambda_{1}}{2}e_{1}+\frac{\lambda_{2}}{2}e_{2}+\frac{\lambda_{3}}{2}e_{3}\in so(3).\EN 
         The algorithm initiates from $t_{0}=0$, $t_{f}=10$, $g_{0}=\left( \begin{array}{ll} 0 \quad\hspace{.2cm} 0 \quad \hspace{.1cm}1\\\hspace{-.1cm}0\quad\hspace{.1cm} -1\quad 0\\1\quad \hspace{.2cm}0\quad \hspace{.2cm}0
        \end{array}\right)$, $t_{s}=5.8 s$ and $g_{s}=\left( \begin{array}{ll} 0 \quad\hspace{.2cm} 1 \quad \hspace{.1cm}0\\\hspace{-.1cm}-1\quad\hspace{.1cm} 0\quad 0\\0\quad \hspace{.2cm}0\quad 1
        \end{array}\right)$ and $g_{f}=\left( \begin{array}{ll} 1 \quad\hspace{.2cm} 0 \quad \hspace{.1cm}0\\\hspace{0cm}0\quad\hspace{.1cm} 1\quad 0\\0\quad \hspace{.2cm}0\quad 1
        \end{array}\right)$. 
        The algorithm converges to $g_{s}=\left( \begin{array}{ll} 0.3039 \quad\hspace{.2cm}  0.9574 \quad \hspace{.1cm}-0.1194\\\hspace{0cm}-0.3688\quad\hspace{.1cm} 0.1508\quad  -0.9165\\-0.8604\quad \hspace{.2cm}0.3156\quad\hspace{.2cm} 0.3988
        \end{array}\right)$ and $t_{s}=5.9733$. The state trajectory and adjoint variables are shown in Figures \ref{f1}-\ref{f4} and Figure \ref{f5} shows the convergence of the Exp-HMP algorithm.
 

 \bibliographystyle{ieeetr}
\bibliography{paper}

\begin{thebibliography}{10}

\bibitem{Shaikh}
M.~Shaikh and P.~E. Caines, ``On the hybrid optimal control problem: Theory and
  algorithms,'' {\em IEEE Trans. Automatic Control}, vol.~52, no.~9,
  pp.~1587--1603, Corrigendum: 54 (6) (2009) p. 1428, 2007.

\bibitem{Agra}
A.~Agrachev and Y.~Sachkov, {\em Control Theory from the Geometric Viewpoint}.
\newblock Springer, 2004.

\bibitem{Bro}
R.~W. Brockett, ``System theory on group manifolds and coset spaces,'' {\em
  SIAM J. Control and Optimization,}, vol.~10, no.~2, pp.~265--284, 1972.

\bibitem{Abra}
R.~Abraham and J.~Marsden, {\em Foundations of Mechanics}.
\newblock AddisonWesley, 1978.

\bibitem{Bloch}
A.~Bloch, {\em Nonholonomic Mechanics and Control}.
\newblock Springer-Verlag, 2003.

\bibitem{Leonard1}
F.~Bullo, N.~Leonard, and A.~Lewis, ``Controllability and motion algorithms for
  under actuated lagrangian systems on {L}ie groups,'' {\em IEEE Trans
  Automatic Control}, vol.~45, no.~8, pp.~1437--1454, 2000.

\bibitem{Bro1}
R.~W. Brockett, ``{L}ie theory and control systems defined on spheres,'' {\em
  SIAM J. Appl. Math.}, vol.~23, no.~2, pp.~213--225, 1973.

\bibitem{Jurd1}
V.~Jurdjevic, ``Integrable hamiltonian systems on complex lie groups,'' {\em
  American Mathematical Society}, vol.~178, no.~838, 2005.

\bibitem{Bloch1}
A.~Bloch, P.~Crouch, and T.~Ratiu, ``{L}ie theory and control systems defined
  on spheres,'' {\em Fields Institute Communications}, vol.~3, no.~3,
  pp.~35--47, 1994.

\bibitem{Jurd}
V.~Jurdjevic, {\em Geometric Control Theory}.
\newblock Cambridge Univ. Press, 1997.

\bibitem{Bran1}
M.~Branicky, V.~Borkar, and S.~Mitter, ``A unified framework for hybrid
  control: Model and optimal control theory,'' {\em IEEE Trans Automatic
  Control}, vol.~43, no.~3, pp.~31--45, 1998.

\bibitem{Riedinger}
P.~Riedinger, C.~Iung, and F.~Krutz, ``Linear quadratic optimization of hybrid
  systems,'' in {\em 38th IEEE Int. Conf. Decision and Control},
  pp.~3059--3064, 1999.

\bibitem{Sussmann}
H.~Sussmann, ``A maximum principle for hybrid optimal control problems,'' in
  {\em Proc. 38th IEEE Int. Conf. Decision and Control}, pp.~425--430, 1999.

\bibitem{Sus1}
H.~Sussmann, ``A nonsmooth hybrid maximum principle,'' in {\em Lecture Notes in
  Control and Information Sciences}, pp.~325--354, 1999.

\bibitem{Hab}
T.~Haberkorn and E.~Tr\'{e}lat, ``Convergence results for smooth
  regularizations of hybrid nonlinear optimal control problems,'' {\em SIAM J.
  Control and Optimization,}, vol.~49, no.~4, pp.~1498--1522, 2011.

\bibitem{Tomlin}
C.~Tomlin and M.~Greenstreet, {\em Hybrid Systems: Computation and Control}.
\newblock Springer Verlag, 2004.

\bibitem{Dmitruk}
A.~Dmitruk and M.~Kaganovich, ``The hybrid maximum principle is a consequence
  of pontryagin maximum principle,'' {\em System and Control Letters}, vol.~57,
  pp.~964--970, 2008.

\bibitem{Xu}
X.~Xu and J.~Antsaklis, ``Optimal control of switched systems based on
  parametrization of the switching instants,'' {\em IEEE Trans. Automatic
  Control}, vol.~49, no.~1, pp.~2--16, 2004.

\bibitem{Azhmyakov}
V.~Azhmyakov, S.~Attia, and J.~Raisch, ``On the maximum principle for impulsive
  hybrid systems,'' in {\em Hybrid Systems: Computation and Control,Springer
  Verlag}, pp.~30--42, 2008.

\bibitem{Barbero}
M.~Barbero-Linan and C.~Muñoz-Lecanda, ``Geometric approach to pontryagin's
  maximum principle,'' {\em Acta Applicandae Mathematicae}, vol.~108, no.~2,
  pp.~429--485, 2009.

\bibitem{Taringoo5}
F.~Taringoo and P.~E. Caines, ``On the optimal control of impulsive hybrid
  systems on riemannian manifolds,'' {\em SIAM Journal on Control and
  Optimization}, vol.~51, no.~4, pp.~3127--3153, 2013.

\bibitem{peter}
P.~E. Caines, {\em Lecture Notes on Nonlinear Systems}.
\newblock Department of Electrical Engineering and Computer Science, McGill
  University, 2000.

\bibitem{Lewis}
F.~Bullo and A.~Lewis, {\em Geometric Control of Mechanical Systems: Modelling,
  Analysis, and Design for Mechanical Control Systems}.
\newblock Springer, 2005.

\bibitem{LeeJ}
J.~Lee, {\em Manifolds and Differential Geometry}.
\newblock American Mathematical Society, 2009.

\bibitem{Tyner}
D.~Tyner, ``Geometric jacobian linearization,'' {\em PhD thesis, Department of
  Mathematics and Statistics, Queens University}, 2007.

\bibitem{Piccoli}
M.~Garavello and B.~Piccoli, ``Hybrid necessary principles,'' {\em SIAM J.
  Math. Anal}, vol.~43, pp.~1867--1887, 2005.

\bibitem{Lee}
E.~Lee and L.~Markus, {\em Foundation of Optimal Control}.
\newblock Dover Books on Advanced Mathematics, 1972.

\bibitem{Rudin}
W.~Rudin, {\em Real and Complex Analysis}.
\newblock New York: McGraw-Hill, 1974.

\bibitem{Varad}
V.~Varadarajan, {\em {L}ie groups, {L}ie algebras, and their representations}.
\newblock Springer, 1984.

\bibitem{Card}
F.~Cardetti, ``On properties of linear control systems on {L}ie groups,'' {\em
  PhD thesis, Department of Mathematics, Louisiana State University}, 2002.

\bibitem{Smith}
S.~Smith, ``Optimization techniques on {R}iemannian manifolds,'' {\em Fields
  Institute Communications}, vol.~3, no.~3, pp.~113--135, 1994.

\bibitem{Lee2}
J.~Lee, {\em Introduction to Smooth Manifolds}.
\newblock Springer, 2002.

\bibitem{Mah}
R.~Mahony and J.~Manton, ``The geometry of the {N}ewton method on non compact
  {L}ie groups,'' {\em Journal of Global Optimization}, vol.~23, pp.~309--327,
  9 2002.

\bibitem{Lee3}
J.~Lee, {\em {R}iemannian Manifolds, An Introduction to Curvature}.
\newblock Springer, 1997.

\bibitem{Taringoo8}
F.~Taringoo and P.~E. Caines, ``Gradient geodesic and newton geodesic hmp
  algorithms for the optimization of hybrid systems,'' in {\em IFAC Annual
  Reviews in Control}, vol.~35, pp.~187--198, 2011.

\bibitem{Jost}
J.~Jost, {\em Reimannian Geometry and Geometrical Analysis}.
\newblock Springer, 2004.

\bibitem{LaSalle}
J.~L. Salle, ``The stability of dynamical systems,'' in {\em CBMS-NSF Regional
  Conference Series in Applied Mathematics}, 1976.

\bibitem{pet}
P.~Petersen, {\em {R}iemannian Geometry}.
\newblock Springer, 1998.

\bibitem{Arnold}
V.~Arnold, {\em Mathematical Methods of Classical Mechanics}.
\newblock Springer, 1989.

\end{thebibliography}

\end{document}